\date{}
\numberwithin{equation}{section}
\DeclareMathOperator{\re}{Re}
\DeclareMathOperator{\im}{Im}
\theoremstyle {definition} \newtheorem {defn} {Definition} [section] }
\theoremstyle {plain}  \newtheorem {thm} [defn] {Theorem}}
\theoremstyle {plain}  }
\theoremstyle {plain} \newtheorem {prop} [defn]{Proposition}}
\theoremstyle {plain} \newtheorem {lem}[defn] {Lemma}}
\theoremstyle {definition} \newtheorem {rmk}[defn] {Remark}}
\theoremstyle {plain} \newtheorem {claim}[defn] {Claim}}
\def\R{{\Bbb{R}}}
\def\C{{\Bbb{C}}}
\def\N{{\Bbb{N}}}
\def\i{{\textbf{i}}}
\DeclareDocumentCommand{\abs}{s m}{
  \operatorname{}
  \IfBooleanTF{#1}{#2}{\left|#2\right|}}
\DeclareDocumentCommand{\norm}{s m}{
  \operatorname{}
  \IfBooleanTF{#1}{#2} {\left\| #2\right\|}}
\DeclareDocumentCommand{\inner}{s m}{
  \operatorname{}
  \IfBooleanTF{#1}{#2}{\left \langle#2\right \rangle}}
\DeclareDocumentCommand{\parenthese}{s m}{
  \operatorname{}
  \IfBooleanTF{#1}{#2}{\left(#2\right)}}
\DeclareDocumentCommand{\square}{s m}{
  \operatorname{}
  \IfBooleanTF{#1} {#2}{\left[#2\right]}}
\DeclareDocumentCommand{\bracket}{s m}{
  \operatorname{}
  \IfBooleanTF{#1}{#2}{\left\{#2\right\}}}
\begin{document}

\author[Sy and Yu]{Mouhamadou Sy$^1$ and Xueying Yu$^2$}

\address{Mouhamadou Sy
\newline 
\indent Department of Mathematics, Imperial College London \indent 
\newline \indent  Huxley Building, London SW7 2AZ, United Kingdom,\indent }
\email{m.sy@imperial.ac.uk}
\thanks{$^1$ The first author's (M.S.) current address is Department of Mathematics, Imperial College London, United Kingdom. This work was written at Department of Mathematics, University of Virginia, Charlottesville, VA}

\address{Xueying  Yu
\newline \indent Department of Mathematics, University of Washington\indent 
\newline \indent  C138 Padelford Hall Box 354350, Seattle, WA 98195,\indent }
\email{xueyingy@uw.edu}
\thanks{$^2$  The second author's (X.Y.) current address is Department of Mathematics, University of Washington, Seattle, WA. This work was written at Department of Mathematics, MIT, Cambridge, MA}

\title[GWP for cubic fractional NLS]{Global well-posedness for the cubic fractional NLS on the unit disk}

\begin{abstract}
In this paper, we show that the cubic nonlinear Schr\"odinger equation with the fractional Laplacian on the unit disk is globally well-posed for certain radial initial data below the energy space and establish a polynomial bound of the global solution. The result is proved by extending the I-method in the fractional nonlinear Schr\"odinger equation setting.

\noindent
\textbf{Keywords}: I-method, global well-posedness, fractional NLS, compact manifold\\

\noindent
\textit{Mathematics Subject Classification (2020):} 35Q55, 35R01, 37K06, 37L50\\
\end{abstract}

\maketitle

\setcounter{tocdepth}{1}
\tableofcontents

\parindent = 10pt     
\parskip = 8pt

\section{Introduction}

We consider the two dimensional defocusing, cubic fractional nonlinear Schr\"odinger equations (FNLS)
\begin{align}\label{fNLS}
\begin{cases}
\i \partial_t u -(-\Delta)^{\alpha} u = |u|^2u, & \alpha \in (0 ,1] ,\\
u(0,x) = \phi(x) , 
\end{cases} 
\end{align}
posed on the unit disk $\Theta = \{x \in \R^2 \, \big| \,  \abs{x} < 1\} $, where $u=u(t,x)$ is a complex-valued function in spacetime $\R \times \Theta$. We assume the  radial symmetry on the initial datum $u_0$ and the Dirichlet boundary condition:
\begin{align*}
u\big|_{\partial \Theta}=0 .
\end{align*}
Note that when $\alpha = 1$, this is the classical nonlinear Schr\"odinger equation (NLS)
\begin{align}
\i \partial_t u + \Delta u = \abs{u}^2 u   \label{NLS} ,
\end{align}
and  for $\alpha \in (0 , 1)$, this is where our main interest located -- fractional nonlinear Schr\"odinger equations \eqref{fNLS}.

Similar as in the NLS setting, the FNLS model conserves its energy and mass in the following forms
\begin{align}
M(u) &=\frac{1}{2}\int_{\Theta} \abs{u}^2 \, dx , \label{Mass} \\
E(u) &= \int_{\Theta} \frac{1}{2} \abs{ \abs{\nabla}^{\alpha} u}^2  +\frac{1}{4} \abs{u}^4 \, dx , \label{Energy}
\end{align}
where $\abs{\nabla} = \sqrt{-\Delta}$. 
Conservation laws above give the control of the $L^2$ and $H^{\alpha}$ norms of the solutions, respectively. Moreover,  the scaling of \eqref{fNLS} is given by 
\begin{align*}
s_c =  1 - \alpha .
\end{align*}

\subsection{Motivation}

In recent decades, there has been of great interest in using fractional Laplacians to model
physical phenomena. The fractional quantum mechanics was introduced by Laskin \cite{laskin} as a generalization of the standard quantum mechanics. This generalization operates on the Feynman path integral formulation by replacing the Brownian motion with a general Levy flight. As a consequence,  one obtains fractional versions of the fundamental Schr\"odinger equation. That means the Laplace operator $(-\Delta)$ arising from the Gaussian kernel used in the standard theory is replaced by its fractional powers $(-\Delta)^{\alpha}$, where $0<\alpha<1$, as such operators naturally generate Levy flights.
The general physical motivation of introducing fractional models is to deal with the so-called anomalous diffusion which arises in many complex systems - in physics, chemistry, biology, social sciences. While the Brownian diffusion, as mathematically studied by Smoluchowski \cite{smolu} and Einstein \cite{einstein}, displays a Gaussian statistics, in an anomalous context the statistics usually follows a power law. This results in heavier distribution queues matching better the long range interaction phenomena.
It turns out that the equation \eqref{fNLS} and its discrete versions are relevant in molecular biology as they have been proposed to describe the charge transport between base pairs in the DNA molecule where typical long range interactions occur (in particular due to intersegmental jumps) \cite{dnadiscrete}. The continuum limit for discrete FNLS was studied rigorously  first  in \cite{KLS} and see also \cite{Gr1, Gr2, HY} for the recent works on the continuum limits.

In this paper, we study the global well-posedness theory{\footnote{With {\it local/gloval well-posedness}, we mean local/global in time existence, uniqueness and continuous dependence of the datum to solution map.}} of FNLS model on the unit disk. The reason why we consider this model is that (1) first, due to the lack of strong dispersion in FNLS (compared to NLS), the global well-posedness theory of FNLS is under development;  (2) second,   the compact manifold  setting is interesting since it allows weaker dispersion than Euclidean spaces (hence less  favorable).   Apart from the challenges behind  weak dispersion mentioned above,  we should be aware of another difficulty  on the unit disk --  the lack of  good Fourier convolution theorem.  This theorem  is  fundamental in analyzing  nonlinearities of the equation, the absence of which  is   due to the different Fourier transform on the unit disk (compared to those in Euclidean spaces) and will cause great difficulties in understanding the nonlinear term in the equation.

Our goal in this paper is to prove the global well-posedness of \eqref{fNLS} with the regularity of the initial datum below the energy space $H^{\alpha}$. Before we present our result, let us first view related works in both NLS and FNLS settings.

\subsection{History and related works}

Let us start from the related works in NLS (that is, $\alpha=1$ in \eqref{fNLS}).  Recall that in Euclidean spaces $\R^d$, the scaling of \eqref{NLS} is given by
\begin{align*}
s_c =  \frac{d}{2}- 1  .
\end{align*}
The problem \eqref{NLS} is called {\it subcritical} when the regularity of the initial datum is smoother than  the scaling $s_c$ of \eqref{NLS}. We will adopt the language in the scaling context in other general manifolds.

In the subcritical  regime ($s> s_c$), it is well-known that the initial value problem \eqref{NLS}   is locally well-posed \cite{Ca}. Thanks to the conservation laws of energy and mass defined in \eqref{Mass} and \eqref{Energy} (with $\alpha =1$), the $H^1$-subcritical initial value problem and the $L^2$-subcritical initial value problem are globally well-posed in the energy space $H^1$ and mass space $L^2$, respectively. In fact, these two initial value problems are also shown to scatter{\footnote{In general terms, with {\it scattering} we intend that the nonlinear solution as time goes to infinity approaches a  linear one.}}. However, one does not expect the scattering phenomenon in compact manifolds.

In the Euclidean space,  the very first global well-posedness result in the subcritical case between the two  (mass and energy) conservation laws  was given by Bourgain in \cite{BourHL}, where he developed the {\it high-low method} to prove  global well-posedness for the cubic NLS in two dimensions for initial data in $H^s, \, s > \frac{3}{5}$.  Bourgain's method is known as {\it high-low method}  since this method consists of estimating separately the evolution of the low frequencies  and of the  high frequencies  of the initial datum. In fact, informally ‘dispersion’ will refer to the fact that the components of the wave packets tend to move at different speeds proportional to the size of the frequencies they are localized on. Hence in the study of dispersive equations (of course FNLS is a member in this family), understanding the behavior of high and low frequencies are very much needed.

To start the high-low method, the initial datum is decomposed into a (smoother) low frequency part and a (rougher) high frequency part. The reason below this cutoff here is that due to  the datum belonging to the space below, its energy is infinity and it is impossible to employ the energy conservation law when  iterating the local solutions up to arbitrarily large time intervals. The choice of the threshold between high and low modes will depend on the regularity of the datum and the arbitrarily large time interval that the solution lives in. After the cutoff, the low frequency part has finite energy, hence its evolution is globally existed.  
Separately, the nonlinear evolution (which is the Duhamel term in the integral equation) to the difference equation for the rougher part has small energy  in an interval of time that is inverse proportional to the size of the low frequency part of the initial datum. Such smallness   of the Duhamel term in the smoother energy space allows one to continue with an iteration by merging this smoother part with the evolution of the low frequency part of the datum. Let us remark that when estimating of the evolution of the rougher part of the  datum, Bourgain used a Fourier transform based space $X^{s,b}$  \cite{BourHL} that captures particularly well the behavior of solutions with low regularity initial datum. At last, let us  also mention that the regularity restriction $s > \frac{3}{5}$ is derived by keeping the accumulation of energy controlled. As a result, in \cite{BourHL} the author obtained a polynomial bound of the sub-energy Sobolev norm of the global solution.

In \cite{CKSTT}, Colliander-Keel-Staffilani-Takaoka-Tao improved the global well-posedness index of the initial datum to $H^s, \, s >\frac{4}{7}$ by introducing a different method, now known as {\it I-method}. Let us  recall  the  {\it I-method} mechanism in \cite{CKSTT} in the next paragraph since it is the method that we will use in this paper. This is also based on an iterative argument.  As we mentioned in the high-low method, in the case of infinite energy solutions, it is hopeless to  make good use of the conserved energy. However, one wishes to make some suitable modification of the energy, so that one  could still have a good control on a substantial portion of the energy (which is known as the almost conservation laws). The modification in the method is to pick up all the low frequencies and certain amount of high frequencies  in the datum. Similarly, the choice of the threshold between high and low frequencies   depend on the regularity of the datum and the arbitrarily large time interval that one iterates  the local solutions up to.

A standard I-method argument consists of three main steps. (1) One first defines a suitable Fourier multiplier  (which is also called `I-operator') that smooths out the initial datum from a rough Sobolev space into the energy space. The  significance of this operator is that it allows one to grab and make use of the energy of the equation in some modified form. (2) Under this modification, the energy of  the smoothed solution is not conserved any more (recall that the energy of the original solution is conserved, but infinity, hence impossible to be employed). Such modification is a trade-off between the  conservation  of infinity energy and its potential to play a role in the argument. With this being said, one can actually   prove that  the energy of the I-operator modified  solution is almost conserved, that is, at each iteration the growth of such modified energy is uniformly small.   (3) Last, iteration of the local well-posedness argument will give a global solution, and the regularity range $s >\frac{4}{7}$ is derived by keeping the accumulation of energy controlled. In fact, such  almost conservation law gives alternative way to measure the growth of the Sobolev norm that the solution lives. As a byproduct, in \cite{CKSTT} the authors obtained a polynomial bound of the sub-energy Sobolev norm of the global solution. 
The cubic NLS in $\R^3$ was also considered in \cite{CKSTT} and the global well-posedness range is given by $s> \frac{5}{6}$.  Later, in \cite{CKSTT2} by combining the Morawetz estimate  with the I-method and a bootstrapping argument, the same authors were able to lower the global well-posedness range to $s > \frac{4}{5}$ and proved, for the first time{\footnote{Actually in \cite{B2} Bourgain proved the global well-posedness  for general data in $H^s , \, s> \frac{11}{13}$ and  scattering for radial data in $H^s, \, s> \frac{5}{7}$.}}, that  the global solution also scatters for data in   $H^s, \, s >\frac{4}{5}$.

The high-low method and I-method have been widely adapted into other dispersive settings and more general manifolds. For instance, \cite{KPVHL} showed the global well-posedness for nonlinear wave equations using the high-low frequency decomposition of Bourgain and \cite{Shen} applied I-method to nonlinear wave equations. \cite{Ha,Zh} studied the global well-posedness of the cubic NLS on closed manifolds without boundary using the I-method. As for the FNLS setting, using the high-low method, \cite{DET} was able to show the global well-posedness for FNLS on the one-dimensional torus.  However, the higher dimensional analogue is still open and challenging. We will, in fact, investigate the  the global behavior of  FNLS in this paper in the higher dimensions. More results on the high-low method and the I-method for NLS can be found in 
\cite{CKSTT3, CR, DPST2, DPST1, D1, D4, D7, D3, FG, Su,Tz, GC} in Euclidean spaces 
and \cite{DPST3, LWX, SY20a} in non-Euclidean settings, 
and for other dispersive models \cite{CKSTT4, CKSTT5, GP, Roy, Wu}. The  high-low method and the I-method are  frequently employed in weak turbulence theory \cite{CKO, Soh, S}, with which the authors proved the growth of higher Sobolev norms. The tools in this paper also can be potentially applied to the weak turbulence theory.

Now let us present the main result in this paper.
\subsection{Main result}
\begin{thm}\label{thm GWP}
The initial value problem \eqref{fNLS} with $\alpha \in (\frac{2}{3} ,1]$ is globally well-posed for radial data $u_0  \in H_{rad}^s (\Theta)$, where
\begin{align*}
s > s_* (\alpha) =  \max \bracket{  \frac{1}{4} \parenthese{\frac{4\alpha^2 - \alpha - 1}{2\alpha-1}  + \sqrt{ \frac{5\alpha^2 -4\alpha +1}{(2\alpha -1)^2}} } , \frac{1}{4} \parenthese{ \frac{\alpha^2 + \alpha -1}{2\alpha -1}  + \sqrt{\frac{\alpha^4 + 10 \alpha^3 -5\alpha^2 - 2\alpha +1}{(2\alpha-1)^2}}} } .
\end{align*}
Moreover, we establish the polynomial bound of the solution 
\begin{align*}
\norm{u(T)}_{H_{rad}^s (\Theta)} \lesssim  T^{\frac{1}{\alpha}(\alpha -s)p} ,
\end{align*}
where the power $p$ above is given by
\begin{align*}
N^p : = \min \{ N^{(\alpha -s)(\frac{2}{\alpha} -4 - \frac{2\alpha+1}{2s-1}) + \alpha - \frac{1}{2}-} , N^{(\alpha -s) (\frac{2}{\alpha} -4) + 3\alpha -2+}  \} .
\end{align*}
\end{thm}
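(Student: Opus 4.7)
The plan is to adapt the I-method of Colliander--Keel--Staffilani--Takaoka--Tao to the fractional setting on the disk. I would first develop a local well-posedness theory for \eqref{fNLS} in the radial subspace of $H^\alpha(\Theta)$, diagonalizing $(-\Delta)^\alpha$ in the basis of radial Dirichlet Bessel eigenfunctions $\{e_k\}$ with eigenvalues $\lambda_k^{2}$, and working in a Bourgain-type space $X^{s,b}_{\alpha}$ adapted to the modulation $\tau + \lambda_k^{2\alpha}$. A radial Strichartz-type estimate for $e^{-\i t(-\Delta)^\alpha}$ on $\Theta$, obtained by combining one-dimensional dispersive bounds for Bessel functions with interpolation against the trivial $L^\infty_t L^2_x$ estimate, would feed into a cubic trilinear estimate and yield local existence on a time interval $\delta$ depending on $\|\phi\|_{H^\alpha}$.

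Next I introduce the smooth Fourier multiplier $I = I_N$ that acts as the identity on frequencies $\lambda_k \le N$ and decays like $(N/\lambda_k)^{\alpha-s}$ for $\lambda_k > N$. The standard interpolation gives $\|Iu\|_{H^\alpha} \lesssim N^{\alpha-s}\|u\|_{H^s}$, while conversely $\|u\|_{H^s}$ is controlled by the mass of $u$ together with $\|Iu\|_{\dot H^\alpha}$. Applying $I$ to \eqref{fNLS} yields $\i\,\partial_t(Iu) - (-\Delta)^\alpha(Iu) = I(|u|^{2}u)$, and a direct computation shows that the modified energy $E(Iu)$ with $E$ as in \eqref{Energy} evolves according to
\[
\tfrac{d}{dt}E(Iu) = \operatorname{Im}\int_\Theta \overline{\bigl[I(|u|^{2}u) - |Iu|^{2}Iu\bigr]}\,\partial_t(Iu)\, dx.
\]
The central task is then to prove the almost-conservation estimate $|E(Iu(t+\delta)) - E(Iu(t))| \lesssim N^{-\beta(\alpha,s)}$ with $\beta(\alpha,s)>0$ taken as large as possible.

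The core technical step is the quadrilinear space-time estimate underlying this almost conservation. Expanding in the spectral basis and dyadically decomposing the four inputs into frequency shells $N_1 \ge N_2 \ge N_3 \ge N_4$, the commutator symbol vanishes unless some $N_j \gtrsim N$, giving a gain of a power of $N/N_1$. The resulting four-linear integral is then bounded either by placing pairs of factors in $L^{4}_{t,x}$ through the $X^{s,b}$--Strichartz embedding, or by a bilinear refinement tailored to the top two frequencies being very unbalanced. The two competing estimates correspond to two natural ways of distributing regularity between dispersion and Sobolev embedding on the disk; each becomes sharp in a different part of phase space, and taking the minimum of the two gains produces the two branches of $s_*(\alpha)$ in the statement.

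Finally I would iterate. Each local step increments $E(Iu)$ by at most $N^{-\beta}$, so the modified energy stays comparable to its initial size $\sim N^{2(\alpha-s)}$ up to a time $T_0(N) \sim N^{p}$, where the exponent $p$ combines $\beta$, the dependence of $\delta$ on $\|Iu\|_{H^\alpha}$, and the initial modified energy. Choosing $N = N(T)$ to saturate this relation shows that the $I$-system extends to arbitrarily large $T$, which gives GWP, and the polynomial growth of $\|u(T)\|_{H^s}$ follows via $\|u\|_{H^s} \lesssim \|u\|_{L^2} + \|Iu\|_{\dot H^\alpha}$. I expect the main obstacle to be precisely this multilinear step on $\Theta$: as highlighted in the introduction, without a Fourier convolution theorem one must sum products of Bessel eigenfunctions directly, and the radial assumption has to be used quantitatively -- through orthogonality in the angular variable and sharp pointwise bounds on $J_\nu$ -- to make the four-linear sum converge absolutely while retaining the desired $N^{-\beta}$ gain.
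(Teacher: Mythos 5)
Your high-level template matches the paper's: modified local theory, I-operator, almost conservation, iteration. But two load-bearing pieces of your plan are either wrong or too vague to close, and both concern exactly the difficulty you flag in passing at the end.

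First, you write that ``the commutator symbol vanishes unless some $N_j \gtrsim N$, giving a gain of a power of $N/N_1$.'' The vanishing statement is fine, but the ``gain of a power of $N/N_1$'' is the mean-value-theorem cancellation of the Euclidean/torus I-method, and it is unavailable here: that cancellation is applied to $m(\xi)-m(\xi_1)m(\xi_2)m(\xi_3)$ \emph{subject to the convolution constraint} $\xi=\xi_1-\xi_2+\xi_3$, which does not hold for products of Dirichlet--Bessel eigenfunctions on the disk. The paper never uses this cancellation; it simply bounds the multiplier ratio $M(\underline N) = m(N_0)/(m(N_1)m(N_2)m(N_3))$ from above in each regime, and extracts the $N^{-\beta}$ from elsewhere. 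So your proposed source of smallness does not exist in this setting, and without a replacement your almost-conservation step will not close.

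Second, and more importantly, after dyadic decomposition the output frequency $N_0$ is completely unconstrained by $N_1,N_2,N_3$, so the four-linear sum has a free highest frequency. You recognize this (``without a Fourier convolution theorem one must sum products of Bessel eigenfunctions directly''), but you don't supply the ingredient that makes the sum converge. The paper's key new lemma here is the weak-interaction estimate (Proposition~\ref{prop weak}): when $n_0\gg n_1\ge n_2\ge n_3$, an integration by parts on the spatial factor using the asymptotics \eqref{eq e_n approx} gives $\left|\int_\Theta e_{n_0}e_{n_1}e_{n_2}e_{n_3}\right|\lesssim n_2n_3/n_0^2$, and a separate one-dimensional modulation estimate gives the additional factor $\langle n_0^{2\alpha}-n_1^{2\alpha}+n_2^{2\alpha}-n_3^{2\alpha}\rangle^{-b}$. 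It is this combined decay that substitutes for the convolution constraint and makes the sum over uncomparable frequencies absolutely convergent. Without something of this type your Step 3 is not an estimate. Relatedly, the paper's local theory and the balanced-frequency cases of the energy increment rest on a \emph{bilinear} Strichartz estimate (Lemma~\ref{lem bilinear}) proved by counting lattice points near the resonant surface $\{z_{n_1}^{2\alpha}+z_{n_2}^{2\alpha}\approx\tau\}$, not on the one-dimensional linear dispersive bound for the Bessel propagator interpolated against $L^\infty_tL^2_x$ that you propose; the bilinear gain $N_{\min}^{1/2+\varepsilon}$ is essential and is not recoverable from a linear estimate alone. You should add these two lemmas (bilinear Strichartz via resonant counting, and quantitative weak interaction) to the plan before the iteration argument can be made rigorous.
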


\begin{rmk}
We note that $s_* (\alpha) < \alpha$.
Actually $s_* (\alpha)$ looks very complicated and it might be hard for readers to see the behavior from its expression. Here is a quick plot of $s_*(\alpha)$ and $\alpha$.
\begin{center}
\begin{tikzpicture}
\begin{axis}[legend pos=outer north east,
    axis lines = left,
    xlabel = $\alpha$,
    ylabel = {$s_*(\alpha)$},
]

\addplot [
	dashed,
    domain=2/3:1, 
    samples=100, 
    ]
    {x};
\addlegendentry{$\alpha$}

\addplot [
    domain=2/3:1, 
    samples=100, 
    ]
    {max( 0.25* ((-4 *x^2 + x + 1)/(1 - 2* x) + sqrt((5 *x^2 - 4 *x + 1)/(1 - 2* x)^2))  ,  (0.25 *(x^2 + x - 1))/(2 *x - 1) + 0.25 *sqrt((x^4 + 10 *x^3 - 5 *x^2 - 2 *x + 1)/(2 *x - 1)^2)  )};
\addlegendentry{$s_*(\alpha)$}

\end{axis}
\end{tikzpicture}
\end{center}
\end{rmk}

\begin{rmk}
Using the bilinear estimates in Section \ref{sec bilinear} and similar strategy in Section \ref{sec LWPI}, it is easy to obtain a local well-posedness argument for FNLS with $\alpha \in (\frac{1}{2} , 1)$. Then thanks to the conservation of energy, it is standard to show the global well-posedness for $\alpha \in (\frac{1}{2} , 1)$ with data in energy space by iterating  the local theory. This is the reason why in the following plot, we can extend our global index in $\alpha \in (\frac{1}{2},\frac{2}{3}]$ trivially and  the global well-posedness curve for $\alpha \in (\frac{1}{2},\frac{2}{3}]$ agrees with the energy line.

\begin{center}
\begin{tikzpicture}
\begin{axis}[ legend pos=outer north east,
    axis lines = left,
    xlabel = $\alpha$,
    ylabel = {$s_*(\alpha)$},
]

\addplot [
	dashed,
    domain=1/2:1, 
    samples=100, 
    ]
    {x};
\addlegendentry{energy level$=\alpha$}

\addplot [
	dashed, mark=*, mark options={scale=0.3},
    domain=1/2:1, 
    samples=30, 
    ]
    {1-x};
\addlegendentry{$s_c= 1-\alpha$}

\addplot [
    domain=1/2:1, 
    samples=100, 
    ]
    {min(x,  max( 0.25* ((-4 *x^2 + x + 1)/(1 - 2* x) + sqrt((5 *x^2 - 4 *x + 1)/(1 - 2* x)^2))  ,  (0.25 *(x^2 + x - 1))/(2 *x - 1) + 0.25 *sqrt((x^4 + 10 *x^3 - 5 *x^2 - 2 *x + 1)/(2 *x - 1)^2)  ))};
\addlegendentry{$s_*(\alpha)$}

\end{axis}
\end{tikzpicture}
\end{center}

\end{rmk}

\subsubsection{Discussion on the setting and the difficulties}

\paragraph{$\bullet$ \it Compact manifold}
Compact domains usually allow weaker dispersion than Euclidean spaces do. Mathematically we can observe this  phenomenon (`loss of regularity')  in the Strichartz estimates on the bounded manifolds.  For example in \cite{bss}, the loss of $\frac{1}{p}$ derivatives  was established for the classical NLS posed on the compact Riemannian manifold $\Omega$ with boundary
\begin{align}\label{eq loss of reg}
\norm{e^{\i t \Delta} f }_{L^p ([0,T] ; L^q (\Omega))} \leq C \norm{f}_{H^{\frac{1}{p}} (\Omega)}
\end{align}
for fixed finite $T$, $p > 2$, $q < \infty$ and $\frac{2}{p} + \frac{d}{q} = \frac{d}{2} $. We expect that a similar loss of regularity phenomenon happens in the  FNLS setting. To beat the weaker dispersion caused by the compact domain, we assume the radial symmetry on the initial datum. Under this assumption, we  can  benefit a lot from the decay of the radial Laplace operator. More precisely, the radial eigenfunctions of the Laplace operator $-\Delta$ with Dirichlet boundary conditions behave like 
\begin{align}\label{eq approx ex}
e_n (r) \sim \frac{\cos((n-\frac{1}{4}) \pi r- \frac{\pi}{4})}{\sqrt{r}} .
\end{align}
(where $r = \abs{x}$)  and their associated eigenvalues are $z_n^2 \sim n^2$ (see Subsection \ref{ssec eigen} for more detailed discussion on $e_n$'s and $z_n$'s). Relying on the decay of  $e_n$'s, we are able to derive a bilinear Strichartz estimate for a product of two functions that are localized in high and low frequencies respectively. The benefit  of the  bilinear Strichartz estimate is that the `loss of regularity' falls on the term with low frequency instead of on both terms (if naively splitting two functions in the bilinear form into two separate estimates then applying the Strichartz estimates), which  is crucial to  make up for the lack of dispersion.

\paragraph{$\bullet$ \it Absence of Fourier convolution theorem}
Note that the Fourier convolution theorem plays an essential rule in I-method, since the convolution theorem translates the Fourier transform of a product into the convolution of Fourier transforms. Combining this fundamental fact with Littlewood-Paley decomposition, we can interpret  the nonlinear term $\abs{u}^2u$ as the sum of the interaction between functions $u_1, u_2 ,u_3$ with frequencies localized at $\xi_i$  ($i = 1,2,3$) on the Fourier side. For example, let the output frequency of in the nonlinearity $\abs{u}^2 u$ to be $\xi$ and each function $u$ is frequency localized  at $\xi_1, \xi_2 , \xi_3$. This convolution theorem implies that $\xi_1 -\xi_2 + \xi_3 = \xi$, which means that this connection in $\xi_1, \xi_2 , \xi_3$ does not allow  the existence of any extremely huge frequency (compared to the output frequency $\xi$). However, on the unit disk, we lose such control in the highest frequency  due to the absence of Fourier convolution theorem. This  causes great difficulty in summing over the frequencies produced from Littlewood-Paley decomposition back to the original nonlinearity.

Let us mention that in a recent work \cite{SY20a}, where the authors extended the high-low method of Bourgain in the hyperbolic setting. They had similar issue with the convolution theorem, and they managed to recover the smoothing estimate on the Duhamel term via the local smoothing estimate combining the radial Sobolev embedding. However, one does not expect to hold such local smoothing estimates on the compact domains.

Back to our unit disk setting, in order to make up for this absence, let us first take a closer look at the eigenfunctions. In the approximate expression of $e_n$ \eqref{eq approx ex}, we see nothing but trigonometric functions.  This suggests  in some sense the existence of certain type of weak interaction between functions whose frequencies are far from each other. Another hope for us to expect such `convolution' type control is behind the following result. It is shown in  \cite{bgtBil} that in the compact domain without boundary (for example $\mathbb{S}^2$),   the weak interaction functions with separated frequencies. More precisely,  for any $j = 1,2,3$, $ z_{n_j} \ll z_{n_0}$ (recall that $z_n$'s are eigenvalues corresponding to eigenfunctions $e_n$'s), then for every $p >0$ there exists $C_p > 0$ such that for every $w_j \in L^2 (\mathbb{S}^2)$, $j =0,1,2,3$,
\begin{align}\label{eq BGT1}
\abs{\int_{\mathbb{S}^2} P_{n_0} w_0 P_{n_1} w_1 P_{n_2} w_2 P_{n_3} w_3 \, dx} \leq C_p z_{n_0}^{-p} \prod_{j=0}^3 \norm{w_j}_{L^2} .
\end{align}
Note that the factor $z_{n_0}^{-p}$ above  can be understood as the weak interaction in their setting.  Hence to obtain a similar weak interaction in our domain with boundary, we develop Proposition \ref{prop weak}, which essentially captures the features in \eqref{eq BGT1}. That is,
\begin{align}\label{eq BGT2}
\abs{\int_{\R \times \Theta} P_{n_0} w_0 P_{n_1} w_1 P_{n_2} w_2 P_{n_3} w_3 \, dx dt } \lesssim \frac{z_{n_2}^{\frac{3}{2}} z_{n_3}^{\frac{1}{2}}}{z_{n_0}^2} \frac{ 1}{ \inner{   z_{n_0}^{\alpha +} }}    \prod_{j=0}^3 \norm{P_{n_i} w_i}_{X^{0, b}} 
\end{align}
for $z_{n_0} \geq 2 z_{n_1} \geq z_{n_2} \geq z_{n_3}$ (see Subsection \ref{ssec X} for the definition of $X^{s,b}$ norms{\footnote{Roughly speaking, this $X^{s,b}$ norm is defined based on a spacetime Fourier transformation, and is very adapted to the dispersive context as it is constructed upon the underlying dispersive operator. In a perturbative regime (subcritical nonlinearities), the Fourier transform of the solutions is supported around the characteristic surface given by the linear operator, hence the $X^{s,b}$ spaces capture efficiency this clustering. }}). Here the factor $\frac{z_{n_2}^{\frac{3}{2}} z_{n_3}^{\frac{1}{2}}}{z_{n_0}^2} \frac{ 1}{ \inner{   z_{n_0}^{\alpha +} }}  $ serves as a similar role of $z_{n_0}^{-p}$ in \eqref{eq BGT1}. It also should be pointed out that this is the key that allows us to sum up decomposed functions with frequencies greatly separated.

Now let us give the main ideas of the proofs.

\subsubsection{Outline of the proofs}

In this subsection we summarize the main three parts in the proof of the main Theorem \ref{thm GWP}.

In the first part of the proof we present the local theory of the I-operator modified FNLS. In this local theory, as one did in the NLS case, we need a Strichartz-type estimate to run the contraction mapping argument. To this end, we adapt the proof of bilinear estimates for NLS on the unit ball in \cite{an}  in Section \ref{sec bilinear} (see also \cite{SY20b} for the multilinear estimates for NLS on the unit ball). However, it is worth pointing out that due to  the fractionality of the dispersion operator, it is impossible for us to periodize the time in the bilinear estimates and count the integer points on its Fourier characteristic surface. Instead, we have to count the integer points  near the characteristic surface, which results in the local well-posedness index not as good as one  obtained in the NLS setting. As for the proof of the local theory, with the help of the bilinear estimtes in Section \ref{sec bilinear},  we are able to obtain an estimate on the nonlinear term, hence obtain the local  well-posedness  via a standard contraction argument. Let us also mention that since this counting argument does not see the difference in the fractional power $\alpha$ of Laplacian, the local well-posedness  index is in fact uniform for all power $\alpha \in [\frac{1}{2} ,1)$.

Following the I-method mechanism in \cite{CKSTT}, the second part of the proof deals with  the analysis of the energy increment of the modified equation.  A typical strategy to follow is that  one dyadically decomposes all the functions in the change of energy, then proceeds the analysis  in different localized frequency  scenarios,  and in the end sums all the decomposed frequencies back to the original form. In order to sum up all the decomposed functions in frequencies, we require a good control on the highest frequency, whose range is usually governed by the Fourier convolution theorem. However, such nice control in the highest frequency  does not hold on the disk due to different format of eigenfunctions of the radial Dirichlet Laplacian. Hence a different analysis  is needed. Instead of the dyadic decomposition,  we make a finer and  delicate decomposition on frequencies, which allows us to observe a very weak interaction between  functions localized in uncomparable frequencies.  Fortunately, this treatment  fulfills the role of convolution theorem and allows to sum the frequency localized functions in a proper way, which is presented in \eqref{eq BGT2}.

At last, we iterate the local theory obtained in the first part, hence obtain  the global solution. It should be noted that   in this argument, to make the  iteration work, we need to guarantee that the accumulated energy increment does not surpass the size of the initial energy of the modified initial datum, which ensures that the initial setup remains the same in the next iteration. As a byproduct of the method,  one obtains that the global solutions satisfy polynomial-in-time bounds.

\subsection{Organization of the paper}
In Section \ref{sec Preliminaries}, we introduce the notations, eigenfunctions and eigenvalues of the radial Dirichlet Laplacian and the functional spaces with their properties that we will use in this paper.  In Section \ref{sec bilinear}, we prove  bilinear Strichartz estimates, which is an important tool in the proof of the energy increment in Section \ref{sec energy increment}. In Section \ref{sec LWPI}, we first define the I-operator in our setting and  present a local theory based on the I-operator modified equation. In Section \ref{sec weak}, we discuss the weak interaction between functions whose frequencies  are localized far away. Then in Section \ref{sec energy increment}, we compute the energy increment of the modified energy on small time intervals. Finally, in Section \ref{sec gwp}, we show the global well-posedness and establish the polynomial bound for the global solutions in Theorem \ref{thm GWP}.

\subsection*{Acknowledgement} 
X.Y. is funded in part by the Jarve Seed Fund and an AMS-Simons travel grant. Both authors would like to thank Gigliola Staffilani for very insightful comments on a preliminary draft of this paper. The authors are very grateful to the anonymous referees for valuable comments and suggestions.

\section{Preliminaries}\label{sec Preliminaries}
In this section, we first discuss  notations used in the rest of the paper, provide the properties of Bessel functions that will be used in later sections, and recall the behaviors of eigenfunctions and eigenvalues of the radial Dirichlet Laplacian. Then we introduce the function spaces ($H^s$ and $X^{s,b}$ spaces) that we will be working on and list some useful inequalities from harmonic analysis.

\subsection{Notations}
We define
\begin{align*}
\norm{f}_{L_t^q L_x^r (I \times \Theta)} : = \square{\int_I \parenthese{\int_{\Theta} \abs{f(t,x)}^r \, dx}^{\frac{q}{r}} dt}^{\frac{1}{q}},
\end{align*}
where $I$ is a time interval.

For $x\in \R$, we set $\inner{x} = (1 + \abs{x}^2)^{\frac{1}{2}}$. We adopt the usual notation that $A \lesssim  B$ or $B \gtrsim A$ to denote an estimate of the form $A \leq C B$ , for some constant $0 < C < \infty$ depending only on the {\it a priori} fixed constants of the problem. We write $A \sim B$ when both $A \lesssim  B $ and $B \lesssim A$.

\subsection{Bessel functions and their properties}

The Bessel function of order $n$, $J_n(x)$, is defined by
\begin{align*}
J_{n}(x) = \sum_{j=0}^{\infty} \frac{(-1)^j}{j! \, \Gamma (j+n +1)} \parenthese{\frac{x}{2}}^{2j+n}.
\end{align*}
In fact, we will only need  Bessel functions  of order zero and order one, that is,
\begin{align*}
J_0 (x) & = \sum_{j=0}^{\infty} \frac{(-1)^j}{(j!)^2} \parenthese{\frac{x}{2}}^{2j} , \\
J_{1}(x) & = \sum_{j=0}^{\infty} \frac{(-1)^j}{j! (j+1)!} \parenthese{\frac{x}{2}}^{2j+1}.
\end{align*}
Moreover, the derivatives of $J_0(x)$ and $J_1 (x)$ satisfy
\begin{align}
& \frac{d}{dx} J_0 (x) = -J_1 (x),\label{eq dJ0}\\
& \frac{d}{dx} (x J_1 (x)) = x J_0 (x) \label{eq dJ1}.
\end{align}
We also have the following approximation formulas  for $n =0,1$
\begin{align}
&\text{ when }\abs{x} < 1, & J_n(x) & = \frac{1}{n! 2^n}  x^n + \mathcal{O} (x^{n+2}) ,   \label{eq J0}\\
& \text{ when }\abs{x} \geq 1,  & J_n(x) & = \sqrt{\frac{2}{\pi}} \frac{\cos(x- \frac{n \pi}{2} - \frac{ \pi}{4})}{\sqrt{x}} + \mathcal{O} (x^{-\frac{3}{2}}) . \label{eq Jinfty}
\end{align}

\subsection{Eigenfunctions and eigenvalues of the radial Dirichlet Laplacian}\label{ssec eigen}
We denote $e_n (r)$ (where $r = \abs{x}$) to be the eigenfunctions of the radial Laplace operator $-\Delta$ with Dirichlet boundary condition  $\Theta$, and the eigenvalues associated to $e_n$ are $z_n^2$. Both $e_n$'s and $z_n$'s are defined via Bessel functions. 

Recall that $J_0$ is the Bessel function of order zero 
\begin{align}\label{eq J_0}
J_0 (x) = \sqrt{\frac{2}{\pi}} \frac{\cos(x- \frac{\pi}{4})}{\sqrt{x}} + \mathcal{O} (x^{-\frac{3}{2}}) .
\end{align} 
Let $z_n$'s  be the (simple) zeros of $J_0 (x)$ such that $0 < z_1 < z_2 <  \cdots < z_n < \cdots$.  It is known that $z_n$ satisfies
\begin{align}\label{eq z_n}
z_n = \pi (n-\frac{1}{4}) + \mathcal{O} (\frac{1}{n}) .
\end{align}
Also $J_0 (z_n r)$ are eigenfunctions of the Dirichlet self adjoint realization of $-\Delta$, corresponding to eigenvalues $z_n^2$. Moreover any $L^2(\Theta)$ radial function can be expanded with respect to $J_0 (z_n r)$. Let us set 
\begin{align}\label{eq e_n}
e_n : = e_n (r) = \norm{J_0 (z_n \cdot)}_{L^2(\Theta)}^{-1} J_0 (z_n r) .
\end{align}
A direct computation gives 
\begin{align}\label{eq J_0 norm}
\norm{J_0 (z_n \cdot)}_{L^2(\Theta)}^{-1}  \sim z_{n}^{\frac{1}{2}} \sim n^{-\frac{1}{2}},
\end{align}
then combining with \eqref{eq J_0}, \eqref{eq z_n} and \eqref{eq e_n} we have
\begin{align}\label{eq e_n approx}
e_n (r) \sim \frac{\cos((n-\frac{1}{4}) \pi r- \frac{\pi}{4})}{\sqrt{r}} .
\end{align}
In Lemma 2.5 in \cite{AT}, one also has 
\begin{align}\label{eq e_n bdd}
\norm{e_n}_{L_x^p(\Theta)} & \lesssim 
\begin{cases}
1 , & \text{ if } 2 \leq p < 4,\\
\ln (1+n)^{\frac{1}{4}} & \text{ if } p= 4,\\
n^{\frac{1}{2}-\frac{2}{p}} , & \text{ if } p> 4 
\end{cases} 
\end{align}

\subsection{$H_{rad}^s$ spaces}
Recall that $(e_n)_{n=1}^{\infty}$ form an orthonormal bases of the Hilbert space of $L^2$ radial functions on $\Theta$. That is, 
\begin{align*}
\int e_n^2 \, dL = 1 ,
\end{align*}
where $dL = \frac{1}{4\pi} r \, d\theta dr$ is the normalized Lebesgue measure on $\Theta$. 
Therefore, we have the expansion formula for a function $u \in L^2 (\Theta)$, 
\begin{align*}
u=\sum_{n=1}^{\infty} \inner{u , e_n} e_n .
\end{align*}
For $s \in \R$, we define the Sobolev space $H^{s} (\Theta)$ on the closed unit ball $\Theta$ as 
\begin{align*}
H_{rad}^{s} (\Theta) : = \bracket{ u = \sum_{n=1}^{\infty} c_n e_n, \, c_n \in \C : \norm{u}_{H^{s} (\Theta)}^2 = \sum_{n=1}^{\infty} z_n^{2s} \abs{c_n}^2 < \infty } .
\end{align*}
We can equip $H_{rad}^{s} (\Theta)$ with the natural complex Hilbert space structure. In particular, if $s =0$, we denote $H_{rad}^{0} (\Theta)$ by $L_{rad}^2 (\Theta)$. For $\gamma \in \R$, we define the map $\sqrt{-\Delta}^{\gamma}$ acting as isometry from $H_{rad}^{s} (\Theta)$ and $H_{rad}^{s - \gamma} (\Theta)$ by
\begin{align*}
\sqrt{-\Delta}^{\gamma} \parenthese{\sum_{n=1}^{\infty} c_n e_n} = \sum_{n=1}^{\infty} z_n^{\gamma} c_n e_n .
\end{align*}
We denote 
\begin{align*}
S_{\alpha}(t) = e^{- \i t (-\Delta)^{\alpha}}
\end{align*}
the flow of the linear Schr\"odinger equation with Dirichlet boundary conditions on the unit ball $\Theta$, and it can be written into
\begin{align*}
S_{\alpha}(t) \parenthese{\sum_{n=1}^{\infty} c_n e_n} = \sum_{n=1}^{\infty} e^{-\i t z_n^{2 \alpha} } c_n e_n.
\end{align*}

\subsection{$X_{rad}^{s,b}$ spaces}\label{ssec X}
Using again the $L^2$ orthonormal basis of eigenfunctions $\{ e_n\}_{n=1}^{\infty}$ with their eigenvalues $z_n^2$ on $\Theta$, we define the $X^{s,b}$ spaces of functions on $\R \times \Theta$ which are radial with respect to the second argument.
\begin{defn}[$X_{rad}^{s,b}$ spaces]\label{defn Xsb}
For $s \geq 0$ and $b \in \R$,
\begin{align*}
X_{rad}^{s,b} (\R \times \Theta) = \{ u \in \mathcal{S}' (\R , L^2(\Theta)) : \norm{u}_{X_{rad}^{s,b} (\R \times \Theta)} < \infty \} ,
\end{align*}
where 
\begin{align}\label{eq Xsb}
\norm{u}_{X_{rad}^{s,b} (\R \times \Theta)}^2 = \sum_{n=1}^{\infty} \norm{\inner{\tau + z_n^{2\alpha}}^b \inner{z_n}^{s} \widehat{c_n} (\tau) }_{L^2(\R_{\tau} ) }^2 ,
\end{align}
and
\begin{align*}
u(t) = \sum_{n=1}^{\infty} c_n (t) e_n .
\end{align*}
Moreover, for $u \in X_{rad}^{0, \infty} (\Theta) =  \cap_{b \in \R} X_{rad}^{0,b} (\Theta)$ we define, for $s \leq 0$ and $b \in \R$, the norm $\norm{u}_{X_{rad}^{s,b} (\R \times \Theta)}$ by \eqref{eq Xsb}.
\end{defn}
Equivalently, we can write the norm \eqref{eq Xsb} in the definition above into
\begin{align*}
\norm{u}_{X_{rad}^{s,b} (\R \times \Theta)} = \norm{S_{\alpha}(-t) u}_{H_t^b H_x^s (\R \times \Theta)} .
\end{align*}
For $T > 0$, we define the restriction spaces $X_T^{s,b} (\Theta)$ equipped with the natural norm
\begin{align*}
\norm{u}_{X_T^{s,b} ( \Theta)} = \inf \{ \norm{\tilde{u}}_{X_{rad}^{s,b} (\R \times \Theta)} : \tilde{u}\big|_{(-T,T) \times \Theta} =u\} .
\end{align*}

\begin{lem}[Basic properties of $X_{rad}^{s,b}$ spaces]\label{lem X property1}
\begin{enumerate}
\item
We have the trivial nesting 
\begin{align*}
X_{rad}^{s,b} \subset  X_{rad}^{s' , b' } 
\end{align*}
whenever $s' \leq s$ and $b' \leq b$, and
\begin{align*}
X_{T}^{s,b} \subset  X_{T'}^{s,b} 
\end{align*}
whenever $T' \leq T$ .
\item
The $X_{rad}^{s,b}$ spaces interpolate nicely in the $s, b$ indices.
\item
For $b > \frac{1}{2}$, we have the following embedding
\begin{align*}
\norm{u}_{L_t^{\infty} H_x^{s} (\R \times \Theta) } \leq C \norm{u}_{X_{rad}^{s,b} (\R \times \Theta)}.
\end{align*}
\item An embedding that will be used frequently in this paper
\begin{align*}
X^{0, \frac{1}{4}} \hookrightarrow L_t^4 L_x^2 .
\end{align*}
\end{enumerate}
\end{lem}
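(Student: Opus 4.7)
The unifying observation is that the map $u \mapsto S_\alpha(-t)u$ is an isometric isomorphism from $X_{rad}^{s,b}(\R \times \Theta)$ onto $H_t^b H_x^s(\R \times \Theta)$. Indeed, if $u(t)=\sum_n c_n(t) e_n$, then $S_\alpha(-t)u(t) = \sum_n e^{\i t z_n^{2\alpha}} c_n(t)\,e_n$, and the modulation identity $\widehat{e^{\i t z_n^{2\alpha}} c_n}(\tau) = \widehat{c_n}(\tau - z_n^{2\alpha})$ combined with translation invariance of $L^2_\tau$ gives
\begin{align*}
\|S_\alpha(-t)u\|_{H_t^b H_x^s}^2 = \sum_n \langle z_n\rangle^{2s}\,\|\langle \tau\rangle^b\,\widehat{e^{\i t z_n^{2\alpha}} c_n}(\tau)\|_{L_\tau^2}^2 = \|u\|_{X_{rad}^{s,b}}^2.
\end{align*}
The plan is to push each of the four assertions through this identification so that it becomes a standard fact about weighted $L^2$ spaces or one-dimensional Sobolev embeddings.

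For the nesting in (1), the pointwise inequality $\langle z_n\rangle^{s'}\langle \tau + z_n^{2\alpha}\rangle^{b'}\leq \langle z_n\rangle^{s}\langle \tau + z_n^{2\alpha}\rangle^{b}$ is immediate since both weights are $\geq 1$ and $s'\leq s$, $b'\leq b$; substituting into \eqref{eq Xsb} gives the inclusion of the global spaces. For the restriction spaces, every $\tilde u\in X_{rad}^{s,b}$ that extends $u|_{(-T,T)\times\Theta}$ also extends $u|_{(-T',T')\times\Theta}$ whenever $T'\leq T$, so the infimum defining $\|u\|_{X_{T'}^{s,b}}$ is taken over a larger family of admissible extensions and is therefore smaller. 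Item (2) reduces via the isometry above to the Stein-Weiss complex interpolation theorem for weighted $L^2$ spaces, $[L^2(w_0), L^2(w_1)]_\theta = L^2(w_0^{1-\theta} w_1^\theta)$; specialising $w_i = \langle z_n\rangle^{2s_i}\langle \tau + z_n^{2\alpha}\rangle^{2b_i}$ produces the bilinear interpolation in $(s,b)$ with the geometrically averaged exponents.

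For (3) and (4), the unitarity of $S_\alpha(t)$ on every $H_x^s$ (it is diagonal in $\{e_n\}$ with unimodular symbols) gives $\|u\|_{L_t^q H_x^s} = \|S_\alpha(-t) u\|_{L_t^q H_x^s}$ for every $q\in[1,\infty]$. Applying the vector-valued 1D Sobolev embedding $H^b(\R; H_x^s) \hookrightarrow L^\infty(\R; H_x^s)$ for $b>\tfrac12$ to $S_\alpha(-t)u$ yields (3) at once. For (4), set $\tilde c_n(t) := e^{\i t z_n^{2\alpha}} c_n(t)$, so that $\|u(t)\|_{L_x^2}^2 = \sum_n |\tilde c_n(t)|^2$ and $\|\tilde c_n\|_{H_t^{1/4}} = \|\langle \tau + z_n^{2\alpha}\rangle^{1/4}\widehat{c_n}(\tau)\|_{L_\tau^2}$. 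The Minkowski integral inequality in the direction $\|\cdot\|_{L_t^4 \ell_n^2} \leq \|\cdot\|_{\ell_n^2 L_t^4}$ (which relies on $4\geq 2$) combined with the scalar 1D Sobolev embedding $H^{1/4}(\R)\hookrightarrow L^4(\R)$ then gives
\begin{align*}
\|u\|_{L_t^4 L_x^2} \leq \Big( \sum_n \|\tilde c_n\|_{L_t^4}^2 \Big)^{1/2} \lesssim \Big( \sum_n \|\tilde c_n\|_{H_t^{1/4}}^2 \Big)^{1/2} = \|u\|_{X_{rad}^{0,1/4}}.
\end{align*}
None of these arguments hides a genuine obstacle; the one detail deserving care is applying Minkowski in the correct direction in step (4), which is available precisely because $4\geq 2$.
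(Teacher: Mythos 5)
Your proof is correct and follows essentially the same route as the paper: the paper reduces item (4) to the one-dimensional Sobolev embedding $H_t^{1/4}\hookrightarrow L_t^4$ via the isometry $\norm{u}_{X^{s,b}} = \norm{S_\alpha(-t)u}_{H_t^b H_x^s}$, and for items (1)--(3) simply cites \cite{an}, whose arguments are the standard ones you reproduce (pointwise weight monotonicity, complex interpolation of weighted $L^2$ spaces, and the vector-valued Sobolev embedding $H_t^b\hookrightarrow L_t^\infty$). One small wording slip: the interpolated exponents $(s_\theta,b_\theta)$ are arithmetic means of $(s_0,b_0)$ and $(s_1,b_1)$ (the weights themselves are geometrically averaged), not ``geometrically averaged exponents'' as written; this does not affect the substance.
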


Note that 
\begin{align*}
\norm{f}_{L_t^{4} L_x^2 } = \norm{S_{\alpha}(t) f}_{L_t^{4} L_x^2} \leq \norm{S_{\alpha}(t) f}_{H_t^{\frac{1}{4}} L_x^2}  = \norm{f}_{X^{0, \frac{1}{4}}} .
\end{align*}

\begin{lem}\label{lem X property2}
Let $b,s >0$ and $u_0 \in H_{rad}^s (\Theta)$. Then there exists $c >0$ such that for $0 < T \leq 1$,
\begin{align*}
\norm{S_{\alpha}(t) u_0}_{X_{rad}^{s,b} ((-T, T) \times \Theta)} \leq c \norm{u_0}_{H_{rad}^s  (\Theta)}.
\end{align*} 
\end{lem}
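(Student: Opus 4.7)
The plan is a direct computation after reducing to a full-line estimate via a time cutoff. Fix a cutoff $\psi \in C_c^\infty(\R)$ with $\psi(t)=1$ on $[-1,1]$ and $\mathrm{supp}\,\psi \subset [-2,2]$. Since $T \leq 1$, the function $\tilde u(t,x) := \psi(t)\, S_\alpha(t) u_0$ is an extension of $S_\alpha(t)u_0\big|_{(-T,T)\times\Theta}$ to all of $\R\times\Theta$, so by the definition of the restriction norm,
\begin{equation*}
\|S_\alpha(t)u_0\|_{X^{s,b}_T(\Theta)} \;\leq\; \|\tilde u\|_{X^{s,b}_{rad}(\R\times\Theta)}.
\end{equation*}
It therefore suffices to bound the right-hand side by $C\|u_0\|_{H^s}$ with $C$ depending only on $\psi$ (hence on $b$).

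Next I would write $u_0 = \sum_n c_n e_n$ in the radial Dirichlet eigenbasis, so
\begin{equation*}
\tilde u(t) \;=\; \sum_{n\geq 1} c_n\, \psi(t)\, e^{-\i t z_n^{2\alpha}}\, e_n,
\end{equation*}
and note that the time Fourier transform of the $n$-th coefficient is $c_n\,\widehat{\psi}(\tau + z_n^{2\alpha})$. Plugging this into the definition of the $X^{s,b}$-norm from \eqref{eq Xsb} gives
\begin{equation*}
\|\tilde u\|_{X^{s,b}_{rad}}^2 \;=\; \sum_{n\geq 1} z_n^{2s}\,|c_n|^2 \int_{\R} \langle \tau + z_n^{2\alpha}\rangle^{2b}\,|\widehat{\psi}(\tau + z_n^{2\alpha})|^2\,d\tau.
\end{equation*}
For each fixed $n$ the change of variables $\sigma = \tau + z_n^{2\alpha}$ converts the inner integral into $\|\psi\|_{H^b(\R)}^2$, which is finite and independent of $n$ because $\psi$ is Schwartz.

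Combining these two steps yields
\begin{equation*}
\|\tilde u\|_{X^{s,b}_{rad}}^2 \;=\; \|\psi\|_{H^b(\R)}^2 \sum_{n\geq 1} z_n^{2s}|c_n|^2 \;=\; \|\psi\|_{H^b(\R)}^2\, \|u_0\|_{H^s(\Theta)}^2,
\end{equation*}
so the conclusion follows with $c := \|\psi\|_{H^b(\R)}$. There is no real obstacle here; the only subtle point is to choose the cutoff once and for all (so the constant is uniform in $T$ for $T \in (0,1]$) and to exploit the fact that the group $S_\alpha(t)$ acts as a phase on the $e_n$-coefficients, which makes the frequency-side weight $\langle \tau + z_n^{2\alpha}\rangle$ align exactly with the natural modulation of $\widehat\psi$.
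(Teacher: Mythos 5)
Your proof is correct and is the standard cutoff argument for this kind of linear $X^{s,b}$ bound; the paper itself does not give a proof but defers to \cite{an}, and this is essentially the argument used there. One small cosmetic point: the paper's $X^{s,b}$ norm in \eqref{eq Xsb} carries the weight $\langle z_n\rangle^{s}$ rather than $z_n^{s}$, so strictly one should write $\langle z_n\rangle^{2s}|c_n|^2$ in the intermediate identity and then use $\langle z_n\rangle \lesssim z_n$ (valid since $z_n \geq z_1 > 1$) to pass to $\|u_0\|_{H^s}^2 = \sum_n z_n^{2s}|c_n|^2$, turning your final equality into an inequality with an absolute constant.
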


The proofs of Lemma \ref{lem X property1} and Lemma \ref{lem X property2}  can be found in \cite{an}.

We also recall the following lemma in \cite{BourExp, gi}
\begin{lem}\label{lem Duhamel}
Let $0 < b' < \frac{1}{2}$ and $0 < b < 1-b'$. Then for all $f \in X_\delta^{s, -b'} (\Theta)$, we have the Duhamel term $w(t) = \int_0^t  S_{\alpha}(t-s) f(\tau) \, ds \in X_\delta^{s,b} (\Theta)$ and moreover
\begin{align*}
\norm{w}_{X_T^{s,b} (\Theta)} \leq C T^{1-b-b'} \norm{f}_{X_T^{s,-b'} (\Theta)} .
\end{align*} 
\end{lem}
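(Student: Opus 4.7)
The plan is to reduce the vector-valued Bourgain-space estimate to a uniform scalar-in-time estimate, by decoupling across the Dirichlet eigenbasis $\bracket{e_n}$, and then to run the classical Fourier-side Duhamel argument in one variable. Writing $f=\sum_n c_n(t)\, e_n$ and $w=\sum_n d_n(t)\, e_n$, the identity $S_\alpha(t) e_n=e^{-\i t z_n^{2\alpha}} e_n$ gives $d_n(t)=\int_0^t e^{-\i(t-s)z_n^{2\alpha}} c_n(s)\, ds$. Setting $\tilde c_n(t)=e^{\i t z_n^{2\alpha}} c_n(t)$ and $\tilde d_n(t)=e^{\i t z_n^{2\alpha}} d_n(t)=\int_0^t \tilde c_n(s)\, ds$, the unitary shift in $\tau$ converts the weight $\inner{\tau+z_n^{2\alpha}}$ in \eqref{eq Xsb} into $\inner{\tau}$, and the $\inner{z_n}^s$ factor is identical on both sides. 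It therefore suffices to prove, uniformly in $n$, that for any $h \in H_t^{-b'}(\R)$ supported in $(-T,T)$ and any fixed $\chi \in C_c^\infty(\R)$ with $\chi \equiv 1$ on $[-1,1]$ and $\supp \chi \subset [-2,2]$,
\[
\norm{\chi(t/T)\, V}_{H_t^b(\R)} \leq C\, T^{1-b-b'}\, \norm{h}_{H_t^{-b'}(\R)}, \quad V(t) := \int_0^t h(s)\, ds,
\]
since this provides the required extension of each $\tilde d_n$, and hence of $w$, used in the definition of $X_T^{s,b}$.

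Next I would write $h$ via its time Fourier transform, interchange integrations, and obtain $\chi(t/T) V(t) = \chi(t/T)\int_\R (e^{\i t \tau}-1)/(\i \tau)\, \widehat h(\tau)\, d\tau$, then split the $\tau$-integration at the scale $\abs{T\tau}\sim 1$. On $\abs{T\tau}\gtrsim 1$ I decompose $(e^{\i t\tau}-1)/\i\tau = e^{\i t\tau}/\i\tau - 1/\i\tau$: the first piece is the inverse Fourier transform of $\widehat h(\tau)\mathbf{1}_{\abs{T\tau}\gtrsim 1}/\i\tau$, whose $H^b_t$ norm is controlled by $\norm{\inner\tau^{b-1}\widehat h\, \mathbf{1}_{\abs{T\tau}\gtrsim 1}}_{L^2_\tau}$, and Cauchy--Schwarz against $\inner\tau^{-b'}\widehat h$ together with $\abs\tau \gtrsim T^{-1}$ and the cost of multiplying by $\chi(t/T)$ produces the target factor $T^{1-b-b'}$; the constant-in-$t$ piece $-1/\i\tau$ becomes a scalar multiple of $\chi(t/T)$ whose $H^b_t$ norm is of order $T^{1/2-b}$, while the scalar itself is bounded by $C T^{1/2-b'}\norm h_{H^{-b'}}$, again yielding $T^{1-b-b'}$. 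On the complementary region $\abs{T\tau}\lesssim 1$ I Taylor-expand $e^{\i t \tau}=\sum_{k\geq 0}(\i t\tau)^k/k!$ to write $(e^{\i t\tau}-1)/\i\tau=\sum_{k\geq 1}(\i\tau)^{k-1} t^k/k!$; the $k$th coefficient in $t$ is bounded by $(C/k!)\, T^{1/2-k-b'}\norm h_{H^{-b'}}$ via Cauchy--Schwarz, using $b'<1/2$ to control the local $L^2$ mass of $\abs\tau^{k-1}\inner\tau^{b'}\mathbf{1}_{\abs{T\tau}\lesssim 1}$, and multiplication by $t^k\chi(t/T)$ contributes an $H^b_t$ cost of order $T^{k+1/2-b}$; summing over $k$ converges absolutely thanks to the factorial, again producing the bound $T^{1-b-b'}\norm h_{H^{-b'}}$.

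The main obstacle is the pole of $1/\tau$ at $\tau=0$: the symbol of the primitive operator is not locally $L^2$, so one cannot simply divide $\widehat h$ by $\i\tau$ and close with a Plancherel identity. The split above is forced precisely in order to isolate the singular contribution into the constant-in-$t$ term, and both hypotheses on $b, b'$ appear here in a sharp way: $b'<1/2$ is exactly what makes $\inner\tau^{-b'}$ locally $L^2$ near $\tau=0$ so that the low-frequency Cauchy--Schwarz closes, while $b+b'<1$ makes $T^{1-b-b'}$ a positive power of $T$, delivering the small-$T$ contractive gain that drives the local well-posedness argument in Section \ref{sec LWPI}.
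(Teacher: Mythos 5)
The paper does not prove this lemma: it is simply recalled with a citation to \cite{BourExp, gi}, so there is no in-text proof to compare against. Your blind reconstruction follows the standard route: gauge out the flow via $\tilde c_n=e^{\i t z_n^{2\alpha}}c_n$, $\tilde d_n=e^{\i t z_n^{2\alpha}}d_n$ so that the $X^{s,b}$ norm decouples as $\sum_n\langle z_n\rangle^{2s}\|\tilde c_n\|_{H^b_t}^2$ (the infimum in the definition of $X^{s,b}_T$ is handled by fixing an extension $F$ of $f$, noting the resulting $W$ agrees with $w$ on $(-T,T)$, and using $\chi(\cdot/T)W$ as the extension of $w$), reduce to a uniform one-dimensional statement for $h\mapsto\chi(t/T)\int_0^t h$, and analyze that operator on the Fourier side with a split at $|T\tau|\sim1$, Taylor expanding the low-frequency piece. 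This is the correct architecture and matches the argument one finds in the cited references.

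Two points in your execution need repair. First, in the high-frequency oscillatory piece you suggest composing the symbol bound $\|G\|_{H^b_t}\lesssim T^{1-b-b'}\|h\|_{H^{-b'}_t}$ (where $\widehat G=\widehat h\,\mathbf{1}_{|T\tau|\gtrsim1}/\i\tau$) with the cost of multiplying by $\chi(t/T)$. But the operator norm of that multiplication on $H^b_t$ is of order $T^{1/2-b}$ for $b\ge 1/2$ (not uniformly bounded), and $T^{1/2-b}\cdot T^{1-b-b'}=T^{3/2-2b-b'}$ falls short of $T^{1-b-b'}$ precisely when $b>\frac12$ -- which is the regime used in Proposition~\ref{prop LWPI}. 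To get the stated bound, do not separate the two steps: write $\widehat{\chi(\cdot/T)G}(\tau)=\bigl(T\widehat\chi(T\cdot)\bigr)*\bigl(\widehat h\,\mathbf{1}/\i\sigma\bigr)$, split the weight by $\langle\tau\rangle^b\lesssim\langle\tau-\sigma\rangle^b+\langle\sigma\rangle^b$, and apply Young; the piece carrying $\langle\tau-\sigma\rangle^b$ pays $T^{-b}$ in the kernel's $L^1$-norm but recovers $T^{1-b'}$ from $\sup_{|\sigma|\gtrsim T^{-1}}\langle\sigma\rangle^{b'}/|\sigma|$, while the piece carrying $\langle\sigma\rangle^b$ yields $T^{1-b-b'}$ directly from $\sup_{|\sigma|\gtrsim T^{-1}}\langle\sigma\rangle^{b+b'}/|\sigma|$ and the hypothesis $b+b'<1$. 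Second, the attribution of where $b'<\tfrac12$ is used is misplaced: the moment integrals $\int_{|T\tau|\lesssim1}|\tau|^{k-1}\langle\tau\rangle^{b'}\,d\tau$ are finite for any $b'>0$ once $k\ge 1$, since the integrand is bounded near $\tau=0$; the condition $b'<\tfrac12$ is actually what makes the tail $\int_{|\tau|\gtrsim T^{-1}}\langle\tau\rangle^{2b'}\tau^{-2}\,d\tau$ converge in the constant-in-$t$ contribution, giving $|c|\lesssim T^{1/2-b'}\|h\|_{H^{-b'}_t}$.
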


\subsection{Useful inequalities}
\begin{lem}[Gagliardo-Nirenberg interpolation inequality]\label{lem GN}
Let $1 < p < q \leq \infty$ and $s > 0$ be such that $\frac{1}{q} = \frac{1}{p} - \frac{s\theta}{d}$ for some $0 < \theta = \theta (d , p ,q ,s) < 1$. Then for any $u \in \dot{W}^{s,p} (\R^d)$, we have
\begin{align*}
\norm{u}_{L^q (\R^d)} \lesssim_{d, p, q, s} \norm{u}_{L^p (\R^d)}^{1-\theta} \norm{u}_{\dot{W}^{s,p} (\R^d)}^{\theta} .
\end{align*}
\end{lem}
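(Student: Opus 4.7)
The plan is a standard high-low frequency argument based on a Littlewood--Paley decomposition and Bernstein's inequality, with a single parameter optimization at the end. Let $\{P_N\}_{N \in 2^{\mathbb{Z}}}$ be a homogeneous Littlewood--Paley decomposition on $\R^d$, so that $u = \sum_N P_N u$. For a threshold $N_0 > 0$ to be chosen, I split $u = u_{\leq N_0} + u_{> N_0}$ into its low- and high-frequency parts.

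For the low frequencies, Bernstein's inequality gives
$$\|P_N u\|_{L^q(\R^d)} \lesssim N^{d(1/p - 1/q)} \|P_N u\|_{L^p(\R^d)} \lesssim N^{d(1/p - 1/q)} \|u\|_{L^p(\R^d)}.$$
Since $p < q$, the exponent $d(1/p - 1/q)$ is positive, so summing the geometric series over $N \leq N_0$ yields $\|u_{\leq N_0}\|_{L^q} \lesssim N_0^{d(1/p - 1/q)} \|u\|_{L^p}$ after applying the triangle inequality. For the high frequencies, I combine Bernstein with the observation that on the frequency annulus $|\xi| \sim N$ the symbol $|\xi|^s$ is comparable to $N^s$, giving $\|P_N u\|_{L^p} \sim N^{-s} \|P_N (-\Delta)^{s/2} u\|_{L^p}$, hence
$$\|P_N u\|_{L^q} \lesssim N^{d(1/p - 1/q) - s} \|(-\Delta)^{s/2} u\|_{L^p}.$$
The hypothesis $\theta < 1$ is equivalent to $d(1/p - 1/q) - s < 0$, so the geometric series in $N > N_0$ converges and gives $\|u_{> N_0}\|_{L^q} \lesssim N_0^{d(1/p - 1/q) - s} \|u\|_{\dot{W}^{s,p}}$.

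To finish, I optimize $N_0$ to balance the two contributions. Assuming both norms on the right are nonzero (otherwise the claim follows trivially from a scaling and limiting argument), choosing $N_0 \sim (\|u\|_{\dot{W}^{s,p}}/\|u\|_{L^p})^{1/s}$ equates the two bounds, and plugging back in gives
$$\|u\|_{L^q} \lesssim N_0^{d(1/p - 1/q)} \|u\|_{L^p} = \|u\|_{L^p}^{1 - \theta} \|u\|_{\dot{W}^{s,p}}^{\theta},$$
where the last equality uses $d(1/p - 1/q) = s\theta$, which is precisely the stated scaling relation $\frac{1}{q} = \frac{1}{p} - \frac{s\theta}{d}$.

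The main technical subtlety is that the homogeneous Littlewood--Paley sum need not converge unconditionally in $L^q$, especially when $q = \infty$. I would handle this by first establishing the inequality for a truncated dyadic sum $\sum_{2^{-M} \leq N \leq 2^M} P_N u$ with constants independent of $M$, and then passing to the limit using the finiteness of the right-hand side together with a density argument on Schwartz functions (for which the full Littlewood--Paley sum converges in every $L^r$). Alternatively, one can work within homogeneous Besov spaces and invoke the embedding $\dot{B}^0_{q,1} \hookrightarrow L^q$ to justify the triangle inequality directly, but the elementary argument above suffices for the $\lesssim$ estimate as stated.
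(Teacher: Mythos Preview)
The paper states this lemma without proof, treating it as a standard background inequality; there is no argument in the paper to compare against. Your Littlewood--Paley high--low decomposition with Bernstein and a single-parameter optimization is a correct and standard proof of the result, and your remarks on handling the $q=\infty$ endpoint via truncation or the embedding $\dot{B}^0_{q,1}\hookrightarrow L^q$ are appropriate.
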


\begin{lem}[Sobolev embedding]\label{lem Sobolev}
For any $u \in C_0^{\infty} (\R^d)$, $\frac{1}{p} - \frac{1}{q} = \frac{s}{d}$ and $s > 0$, we have 
\begin{align*}
\dot{W}^{s,p} (\R^d) \hookrightarrow L^q (\R^d) .
\end{align*}
\end{lem}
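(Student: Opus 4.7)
The plan is to reduce the fractional Sobolev embedding to the Hardy–Littlewood–Sobolev inequality via the Riesz potential representation of $u$. Concretely, for $0<s<d$, the operator $(-\Delta)^{-s/2}$ on $C_0^\infty(\mathbb{R}^d)$ can be realized as convolution with the Riesz kernel
\begin{align*}
I_s f(x) = c_{d,s} \int_{\mathbb{R}^d} \frac{f(y)}{|x-y|^{d-s}}\, dy,
\end{align*}
as one checks on the Fourier side using $\widehat{|\cdot|^{-(d-s)}} = c_{d,s}' |\xi|^{-s}$ (up to constants). For $u\in C_0^\infty(\mathbb{R}^d)$, since $(-\Delta)^{s/2}u$ is a well-defined Schwartz-class tempered distribution, one has the identity $u = I_s\bigl((-\Delta)^{s/2}u\bigr)$, valid pointwise.

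Given this representation, the next step is to invoke the Hardy–Littlewood–Sobolev inequality: for $1<p<q<\infty$, $0<s<d$, and $\tfrac{1}{q}=\tfrac{1}{p}-\tfrac{s}{d}$,
\begin{align*}
\|I_s f\|_{L^q(\mathbb{R}^d)} \lesssim_{d,p,q,s} \|f\|_{L^p(\mathbb{R}^d)}.
\end{align*}
Applying this to $f = (-\Delta)^{s/2}u$ and recalling that $\|u\|_{\dot{W}^{s,p}(\mathbb{R}^d)} = \|(-\Delta)^{s/2}u\|_{L^p(\mathbb{R}^d)}$ yields
\begin{align*}
\|u\|_{L^q(\mathbb{R}^d)} = \|I_s((-\Delta)^{s/2}u)\|_{L^q} \lesssim \|(-\Delta)^{s/2}u\|_{L^p} = \|u\|_{\dot{W}^{s,p}(\mathbb{R}^d)},
\end{align*}
which is precisely the embedding statement, with density of $C_0^\infty$ in $\dot{W}^{s,p}$ extending the inequality to the whole homogeneous Sobolev space.

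The main technical obstacle is the HLS inequality itself, which I would either cite or establish via the standard route: one shows the weak-type $(p,q)$ estimate for $I_s$ by splitting $f = f\mathbf{1}_{\{|f|\le \lambda\}} + f\mathbf{1}_{\{|f|>\lambda\}}$ and decomposing the kernel $|x-y|^{-(d-s)}$ into a local part controlled by the Hardy–Littlewood maximal function and a tail part in $L^{p'}$, optimizing the threshold; then Marcinkiewicz interpolation upgrades the weak bound to a strong $L^p\to L^q$ bound for $1<p<q<\infty$. A fully equivalent alternative, if one prefers a more dispersive-analysis flavored proof, is the Littlewood–Paley approach: decompose $u=\sum_j P_j u$, apply Bernstein $\|P_j u\|_{L^q}\lesssim 2^{jd(1/p-1/q)}\|P_j u\|_{L^p}=2^{js}\|P_j u\|_{L^p}$, and use a Littlewood–Paley square function characterization of $L^q$ (Fefferman–Stein) together with the identity $2^{js}\|P_j u\|_{L^p}\sim \|P_j (-\Delta)^{s/2}u\|_{L^p}$ to sum in $j$. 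Either route reaches the same conclusion; the endpoints $p=1$ or $q=\infty$ are excluded, as the hypothesis $1<p<q<\infty$ implicitly requires for HLS to hold.
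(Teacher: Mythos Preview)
The paper does not prove this lemma; it is stated as a classical inequality in the preliminaries section without any argument, to be used as a black box later. Your proof via the Riesz potential representation $u = I_s\bigl((-\Delta)^{s/2}u\bigr)$ together with the Hardy--Littlewood--Sobolev inequality is the standard textbook route and is correct (with the usual implicit restrictions $1<p<q<\infty$, $0<s<d$). There is nothing to compare against in the paper itself, and nothing is missing from your argument.
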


From now on, for simplicity of notation, we write $H^s$ and $X^{s,b}$ for the spaces $H_{rad}^s$  and $X_{rad}^{s,b}$ defined in Section \ref{sec Preliminaries}.

\section{Bilinear Strichartz estimates}\label{sec bilinear}
In this section, we prove the bilinear estimates that will be heavily used in the rest of this paper. The proof is adapted from \cite{an} with two dimensional modification and a different counting lemma.

\subsection{Bilinear Strichartz estimates for FNLS}
\begin{lem}[Bilinear estimates]\label{lem bilinear}
Consider $\alpha \in [\frac{1}{2} ,1)$. For $j  =1,2$, $N_j >0$ and $u_j \in L^2 (\Theta)$ satisfying 
\begin{align*}
\mathbf{1}_{\sqrt{-\Delta} \in [N_j , 2 N_j]} u_j = u_j , 
\end{align*}
we have the following bilinear estimates. 
\begin{enumerate}
\item
The bilinear estimate without derivatives.\\
Without loss of generality, we assume $N_1 \geq N_2 $, then for any $\varepsilon >0 $
\begin{align}\label{eq bilinear1}
\norm{S_{\alpha}(t) u_1  \, S_{\alpha}(t) u_2}_{L_{t,x}^2 ((0,1) \times \Theta)} \lesssim  N_2^{\frac{1}{2} + \varepsilon}   \norm{u_1}_{L_x^2 (\Theta)} \norm{u_2}_{L_x^2 (\Theta)} .
\end{align}

\item
The bilinear estimate with derivatives.\\
Moreover, if $u_j \in H_0^1 (\Theta)$ and assume $N_1 \geq N_2 $, then for any $\varepsilon >0 $
\begin{align}\label{eq bilinear2}
\norm{ \nabla S_{\alpha}(t) u_1  \,  S_{\alpha}(t) u_2}_{L_{t,x}^2 ((0,1) \times \Theta)} \lesssim N_1N_2^{\frac{1}{2} +\varepsilon}  \norm{u_1}_{L_x^2 (\Theta)} \norm{u_2}_{L_x^2 (\Theta)}.
\end{align}
\end{enumerate}
\end{lem}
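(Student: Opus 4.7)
I plan to follow the approach of Anton \cite{an} for the classical NLS case, adapting it to the fractional dispersion. The key change is that one can no longer periodize in time (since $z_n^{2\alpha}$ are not integers for $\alpha<1$), so I will count integer lattice points \emph{near} the level set $\{z_n^{2\alpha}+z_k^{2\alpha}-z_{n'}^{2\alpha}-z_{k'}^{2\alpha}=O(1)\}$ using a volume argument based on the monotonicity of $z\mapsto z^{2\alpha}$. I will prove \eqref{eq bilinear1} first; \eqref{eq bilinear2} follows immediately, since $\nabla S_\alpha(t)u_1$ remains spectrally localized at $N_1$ and an extra factor of $N_1$ can be pulled out of the $L^2_{t,x}$ norm.

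\textbf{Expansion and decoupling.} Write $u_j=\sum_{n\in I_j}c_n^{(j)}e_n$ with $I_j=\{n:z_n\in[N_j,2N_j]\}$, so $|I_j|\sim N_j$ by \eqref{eq z_n}. Expanding the square gives
\begin{align*}
\norm{S_\alpha(t)u_1\cdot S_\alpha(t)u_2}_{L^2_{t,x}}^2=\sum_{n,n'\in I_1,\,k,k'\in I_2}c_n^{(1)}\overline{c_{n'}^{(1)}}c_k^{(2)}\overline{c_{k'}^{(2)}}\,\mathcal T(n,n',k,k')\,\mathcal S(n,n',k,k'),
\end{align*}
where $\mathcal T=\int_0^1 e^{-it\Omega}dt$ with $\Omega=z_n^{2\alpha}-z_{n'}^{2\alpha}+z_k^{2\alpha}-z_{k'}^{2\alpha}$, so that $|\mathcal T|\lesssim\langle\Omega\rangle^{-1}$, and $\mathcal S=\int_\Theta e_ne_{n'}e_ke_{k'}\,dx$.

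\textbf{Spatial integral via the asymptotics.} Using \eqref{eq e_n approx} and the product-to-sum identities, on the bulk region $r\gtrsim 1/N_2$ the product $e_ne_{n'}e_ke_{k'}\cdot r$ is a finite linear combination of $r^{-1}\cos(\pi\mu r+c_\mu)$ with $\mu=\pm n\pm n'\pm k\pm k'$. The endpoint region $r\lesssim 1/N_2$ is handled separately using the standard bound $|e_n(r)|\lesssim n^{1/2}$, and contributes a manageable tail absorbed into the $N_2^\varepsilon$ loss. The bulk integrals $\int r^{-1}\cos(\pi\mu r+c_\mu)\,dr$ are $O(\langle\mu\rangle^{-1})+O(\log N_2)$, which effectively restricts the summation to quadruples with $\pm n\pm n'\pm k\pm k'=O(1)$, up to a logarithmic loss.

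\textbf{Counting lemma.} After a Cauchy--Schwarz in the summation against $c_n^{(1)}\overline{c_{n'}^{(1)}}c_k^{(2)}\overline{c_{k'}^{(2)}}$, the bound reduces to estimating
\begin{align*}
\mathcal N=\#\bigl\{(n,n',k,k')\in I_1^2\times I_2^2:\ \pm n\pm n'\pm k\pm k'=O(1),\ \Omega=O(1)\bigr\}.
\end{align*}
Fix a sign pattern. For each $(n,k)\in I_1\times I_2$ (contributing $N_1N_2$ choices), the linear constraint determines $n'\pm k'$ to within $O(1)$ integers. The nonlinear constraint $\Omega=O(1)$, combined with the mean value theorem applied to $z\mapsto z^{2\alpha}$ on $[N_1,2N_1]$ where its derivative is $\sim N_1^{2\alpha-1}$, confines $n'$ to an interval of length $\lesssim N_1^{1-2\alpha}$. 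For $\alpha\geq 1/2$ this length is $O(1)$, so we get $\mathcal N\lesssim N_1N_2\cdot N_2^\varepsilon$. Combining $\mathcal N$ with the $\ell^2$ bound on the coefficients and the weights $|\mathcal T\mathcal S|$ yields the gain $N_2^{1/2+\varepsilon}$ in \eqref{eq bilinear1}.

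\textbf{Main obstacle.} The heart of the proof is the counting step. In the NLS case, periodization in time reduces the problem to an exact arithmetic identity $n^2+k^2=(n')^2+(k')^2$ which, with the linear constraint, forces $\{n,k\}=\{n',k'\}$ via divisor bounds. Here integrality is lost; I rely instead on a transversality / volume argument using that $z^{2\alpha}$ is strictly monotone with derivative of size $N_1^{2\alpha-1}$. This is precisely why the threshold $\alpha\geq 1/2$ appears, and why the resulting bound is uniform in $\alpha\in[1/2,1)$ as noted in the introduction. A secondary (but routine) difficulty is the careful treatment of the endpoint $r\to 0$, where the asymptotic form of $e_n$ breaks down and where one must appeal directly to Bessel function bounds.
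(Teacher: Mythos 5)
Your proposal takes a genuinely different route from the paper's, and as written it has gaps.

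The paper's argument is as follows: after expanding the square of the $L^2_{t,x}$ norm, it multiplies by a smooth cutoff $\eta\in C_0^\infty$ in time so that $\widehat\eta$ decays rapidly, groups $(n_1,n_2)$ into unit-spaced level sets $\Lambda_{N_1,N_2,\tau}=\{|z_{n_1}^{2\alpha}+z_{n_2}^{2\alpha}-\tau|\le\frac12\}$, applies Schur's test on the dual-time kernel $\langle\tau-\tau'\rangle^{-2}$, and bounds the spatial integral crudely by Cauchy--Schwarz together with $\norm{e_{n_1}e_{n_2}}_{L^2}\lesssim N_2^{\varepsilon/2}$ (from the $L^4$ bound \eqref{eq e_n bdd}). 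The only counting needed is $\#\Lambda_{N_1,N_2,\tau}=\mathcal O(N_2)$, proved exactly via the monotonicity of $z\mapsto z^{2\alpha}$ that you identify (so the $\alpha\ge\frac12$ threshold does appear in the same place). The paper never analyzes the oscillatory structure of $\int_\Theta e_ne_{n'}e_ke_{k'}\,dx$ and never needs the linear relation $\pm n\pm n'\pm k\pm k'=\mathcal O(1)$.

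Your route keeps the quadrilinear form explicit and tries to extract a near-linear frequency constraint from the product-to-sum expansion of the eigenfunctions. Several steps as written would fail. \emph{(i)} The bulk integral $\int_{1/N_2}^1 r^{-1}\cos(\pi\mu r+c)\,dr$ is of size $\log(N_2/\langle\mu\rangle)$ for $\langle\mu\rangle\lesssim N_2$ and $\sim N_2/\mu$ beyond; this is a slowly decaying weight, \emph{not} a constraint restricting to $\mu=\mathcal O(1)$. Summing it over $\mu$ produces a factor of size $N_2$ that must be tracked; it is not "up to a logarithmic loss." \emph{(ii)} With a sharp time cutoff, $|\mathcal T|\lesssim\langle\Omega\rangle^{-1}$, and $\sum_\Omega\langle\Omega\rangle^{-1}$ over the accessible range $|\Omega|\lesssim N_1^{2\alpha}$ diverges as $\log N_1$, which is not absorbed into $N_2^\varepsilon$ when $N_1\gg N_2$. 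The paper's smooth $\eta$ is precisely what upgrades this to $\langle\tau-\tau'\rangle^{-2}$ and makes the Schur kernel summable; your sketch omits this. \emph{(iii)} The claim that "combining $\mathcal N$ with the $\ell^2$ bound on the coefficients $\ldots$ yields $N_2^{1/2+\varepsilon}$" is not carried out, and cannot be, starting from $\mathcal N\lesssim N_1 N_2$: if each $(n,k)$ really had $\mathcal O(1)$ partners $(n',k')$, Schur's test would give an $\mathcal O(1)$ operator norm, i.e.\ $L^2_{t,x}\lesssim 1$, which is visibly too strong; this indicates the spatial "constraint" is being used where only a weight is available. A corrected version of your approach (smooth time cutoff, Schur's test on the 2D pair indices, careful treatment of the $\log(N_2/\langle\mu\rangle)$ weight and the $r\to 0$ region using $|e_n(r)|\lesssim\min(n^{1/2},r^{-1/2})$) could plausibly recover $N_2^{1/2+}$, but it is substantially more delicate than what the paper does and would need all of the above filled in. The paper's route — decoupling the spatial integral by Cauchy--Schwarz and pushing everything into the $L^4$ eigenfunction bound — sidesteps the entire analysis of $\mathcal S$.
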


\begin{rmk}
Lemma \ref{prop bilinear} also holds for $S_{\alpha}(t) u_0  \,  \overline{S_{\alpha}(t) v_0}$. In fact, 
\begin{align*}
\norm{S_{\alpha}(t) u_0 \, S_{\alpha}(t)v_0}_{L_t^2 L_x^2}^2 = \norm{S_{\alpha}(t)u_0  \, S_{\alpha}(t) v_0  \overline{S_{\alpha}(t) u_0}  \, \overline{S_{\alpha}(t)v_0}}_{L_t^1 L_x^1}  = \norm{S_{\alpha}(t) u_0  \,  \overline{S_{\alpha}(t) v_0}}_{L_t^2 L_x^2}^2 .
\end{align*}
\end{rmk}

\begin{prop}[Lemma 2.3 in \cite{bgtBil}: Transfer principle]\label{prop bilinear}
For any $b > \frac{1}{2}$ and for $j =1,2$, $N_j >0$ and $f_j \in X^{0,b} (\R \times \Theta)$ satisfying 
\begin{align*}
\mathbf{1}_{\sqrt{-\Delta} \in [N_j , 2 N_j]} f_j = f_j ,
\end{align*}
one has the following bilinear estimates. 
\begin{enumerate}
\item
The bilinear estimate without derivatives.

Without loss of generality, we assume $N_1 \geq N_2$, then for any $\varepsilon >0 $
\begin{align}\label{eq bilinear1'}
\norm{ f_1 f_2}_{L_{t,x}^2 ((0,1) \times \Theta)} \lesssim  N_2^{\frac{1}{2} + \varepsilon}  \norm{f_1}_{X^{0,b} ((0,1) \times \Theta)} \norm{f_2}_{X^{0,b} ((0,1) \times \Theta)} .
\end{align}

\item
The bilinear estimate with derivatives.

Moreover, if $f_j \in H_0^1 (\Theta)$ and assume $N_1 \geq N_2 $,, then for any $\varepsilon >0 $
\begin{align}\label{eq bilinear2'}
\norm{ \nabla f_1  f_2}_{L_{t,x}^2 ((0,1) \times \Theta)} \lesssim N_1  N_2^{\frac{1}{2} + \varepsilon}  \norm{f_1}_{X^{0,b} ((0,1) \times \Theta)} \norm{f_2}_{X^{0,b} ((0,1) \times \Theta)}.
\end{align}
\end{enumerate}
\end{prop}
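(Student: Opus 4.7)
The plan is to invoke the standard Bourgain transfer principle to lift the linear bilinear estimates in Lemma \ref{lem bilinear} to arbitrary $X^{0,b}$ functions. For $j=1,2$, expand radially as $f_j(t,x) = \sum_n c_{n,j}(t)\, e_n(x)$ and, in the time Fourier inversion formula for each $c_{n,j}$, perform the change of variable $\sigma = \tau + z_n^{2\alpha}$ to obtain
\begin{align*}
f_j(t,x) = \int_\R e^{\i t \sigma}\,(S_\alpha(t) G_{\sigma,j})(x)\, d\sigma,
\end{align*}
where $G_{\sigma,j}(x) := \sum_n g_{n,j}(\sigma)\, e_n(x)$ with $g_{n,j}(\sigma) := \widehat{c_{n,j}}(\sigma - z_n^{2\alpha})$. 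Each $G_{\sigma, j}$ inherits the spatial frequency localization $\mathbf{1}_{\sqrt{-\Delta}\in[N_j, 2N_j]} G_{\sigma,j} = G_{\sigma,j}$, and directly from Definition \ref{defn Xsb} one reads off the identity
\begin{align*}
\norm{f_j}_{X^{0,b}}^2 = \int_\R \inner{\sigma}^{2b} \norm{G_{\sigma,j}}_{L^2_x}^2\, d\sigma.
\end{align*}

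Second, multiply the two representations and apply Minkowski's inequality in $\sigma_1, \sigma_2$ to pull the $L^2_{t,x}$ norm inside:
\begin{align*}
\norm{f_1 f_2}_{L^2_{t,x}((0,1)\times \Theta)} \leq \int_\R \int_\R \norm{\parenthese{S_\alpha(t) G_{\sigma_1, 1}} \parenthese{S_\alpha(t) G_{\sigma_2, 2}}}_{L^2_{t,x}((0,1)\times \Theta)}\, d\sigma_1\, d\sigma_2.
\end{align*}
The unimodular phases $e^{\i t \sigma_j}$ drop out of the $L^2_{t,x}$ norm, so Lemma \ref{lem bilinear}(1) applies pointwise in $(\sigma_1, \sigma_2)$ and bounds the integrand by $C N_2^{\frac{1}{2}+\varepsilon}\, \norm{G_{\sigma_1, 1}}_{L^2_x}\, \norm{G_{\sigma_2, 2}}_{L^2_x}$.

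Third, the hypothesis $b > \frac{1}{2}$ ensures $\int_\R \inner{\sigma}^{-2b}\, d\sigma < \infty$, so Cauchy-Schwarz in $\sigma$ yields, for each $j = 1, 2$,
\begin{align*}
\int_\R \norm{G_{\sigma, j}}_{L^2_x}\, d\sigma \leq \parenthese{\int_\R \inner{\sigma}^{-2b}\, d\sigma}^{\frac{1}{2}} \norm{f_j}_{X^{0,b}} \lesssim \norm{f_j}_{X^{0,b}}.
\end{align*}
Combining these ingredients proves \eqref{eq bilinear1'}. For \eqref{eq bilinear2'}, the same decomposition gives $\nabla f_1(t,x) = \int_\R e^{\i t \sigma_1}\, \nabla (S_\alpha(t) G_{\sigma_1, 1})(x)\, d\sigma_1$, and applying Lemma \ref{lem bilinear}(2) pointwise in $(\sigma_1, \sigma_2)$ replaces the factor $N_2^{\frac{1}{2}+\varepsilon}$ by $N_1 N_2^{\frac{1}{2}+\varepsilon}$; the Cauchy-Schwarz step then closes the argument identically.

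The main point that deserves a line of verification is that the frequency-localization hypothesis on $f_j$ really does descend to $G_{\sigma, j}$ uniformly in $\sigma$, which is immediate from $g_{n,j}(\sigma) = \widehat{c_{n,j}}(\sigma - z_n^{2\alpha})$ since $c_{n,j} \equiv 0$ for $z_n \notin [N_j, 2 N_j]$. Beyond that, the argument is a routine transfer of the linear bilinear inequality and presents no genuine analytic obstacle beyond Lemma \ref{lem bilinear} itself.
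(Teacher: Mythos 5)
Your argument is correct and is exactly the standard transfer-principle proof (Fourier representation $f_j = \int_\R e^{\i t\sigma} S_\alpha(t) G_{\sigma,j}\,d\sigma$, Minkowski to remove the unimodular phases, Lemma \ref{lem bilinear} pointwise in $\sigma$, then Cauchy–Schwarz in $\sigma$ using $b > \frac{1}{2}$), with the frequency localization of $G_{\sigma,j}$ correctly inherited from that of $f_j$. The paper itself does not reproduce a proof but cites Lemma~2.3 of \cite{bgtBil}, which is established by precisely this argument, so your route matches the intended one.
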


\begin{rmk}[Interpolation of bilinear estimates]\label{rmk inter bilinear}
In fact, using H\"older inequality, Bernstein inequality and Lemma \ref{lem X property1}, we write
\begin{align*}
\norm{ f_1 f_2}_{L_{t,x}^2 ((0,1) \times \Theta)} & \lesssim   \norm{f_1}_{L_t^4 L_x^2 ((0,1 ) \times \Theta)} \norm{f_2}_{L_t^4 L_x^{\infty} ((0,1 ) \times \Theta)}  \lesssim  \norm{f_1}_{X^{0, \frac{1}{4}} (\Theta)} N_2 \norm{f_2}_{L_t^4 L_x^{2} ((0,1 ) \times \Theta)} \\
& \lesssim N_2 \norm{f_1}_{X^{0, \frac{1}{4}} ((0,1) \times \Theta)} \norm{f_2}_{X^{0, \frac{1}{4}} ((0,1) \times \Theta)} .
\end{align*}
Then interpolating it with \eqref{eq bilinear1'}, for $b = \frac{1}{2}+$
\begin{align*}
\norm{ f_1 f_2}_{L_{t,x}^2 ((0,1) \times \Theta)} \lesssim  N_2^{\frac{1}{2} + \varepsilon}  \norm{f_1}_{X^{0,b} ((0,1) \times \Theta)} \norm{f_2}_{X^{0,b} ((0,1) \times \Theta)}  ,
\end{align*}
we obtain
\begin{align*}
\norm{ f_1 f_2}_{L_{t,x}^2 ((0,1) \times \Theta)} \lesssim  N_2^{\beta} \norm{f_1}_{X^{0, b(\beta) } ((0,1) \times \Theta)} \norm{f_2}_{X^{0, b(\beta) } ((0,1) \times \Theta)} .
\end{align*}
where $b(\beta) =\frac{1}{4}+ (1-\beta)\frac{1}{2}+$, $\beta \in (\frac{1}{2} , 1]$.
Moreover, if restricting  in the time interval $[0, \delta]$, we have by H\"older inequality
\begin{align}\label{eq inter bilinear}
\norm{ f_1 f_2}_{L_{t,x}^2 ((0,\delta) \times \Theta)} \lesssim  N_2^{\beta} \delta^{2(b- b(\beta) )} \norm{f_1}_{X_{\delta}^{0, b} } \norm{f_2}_{X_{\delta}^{0, b}} .
\end{align}
\end{rmk}

\begin{proof}[Proof of Lemma \ref{lem bilinear}]
First  we write
\begin{align*}
u_1 = \sum_{n_1 \sim N_1} c_{n_1}   e_{n_1}(r) ,  \quad u_2 = \sum_{n_2 \sim N_2} d_{n_2}   e_{n_2}(r) ,
\end{align*}
where $c_{n_1} = \inner{u_1 , e_{n_1}}_{L^2}$ and $d_{n_2} = \inner{u_2 , e_{n_2}}_{L^2}$. Then 
\begin{align*}
S_{\alpha}(t) u_1 = \sum_{n_1 \sim N_1} e^{-\i t z_{n_1}^{2 \alpha} } c_{n_1}  e_{n_1}(r) , \quad S_{\alpha}(t) u_2= \sum_{n_2 \sim N_2} e^{-\i t z_{n_2}^{2 \alpha} } d_{n_2}  e_{n_2}(r)  .
\end{align*}
Therefore, the bilinear objects that one needs to estimate are the $L_{t,x}^2$ norms of 
\begin{align*}
E_0(N_1, N_2) & = \sum_{n_1 \sim N_1} \sum_{n_2 \sim N_2}  e^{-\i t  (z_{n_1}^{2\alpha} +z_{n_2}^{2\alpha}) }  (c_{n_1} d_{n_2}) ( e_{n_1} e_{n_2}) ,\\
E_1(N_1, N_2) & = \sum_{n_1 \sim N_1} \sum_{n_2 \sim  N_2}  e^{-\i t  (z_{n_1}^{2\alpha} +z_{n_2}^{2\alpha}) }  (c_{n_1} d_{n_2} )(\nabla e_{n_1}  e_{n_2} ).
\end{align*}
Let us focus on \eqref{eq bilinear1} first. 
\begin{align}\label{eq bi2}
\begin{aligned}
(\text{LHS  of } \eqref{eq bilinear1})^2 & = \norm{E_0(N_1, N_2) }_{L^2 ((0, 1) \times \Theta)}^2  \\
& = \int_{\R \times \Theta} \abs{\sum_{n_1 \sim N_1} \sum_{n_2 \sim N_2}  e^{-\i t  (z_{n_1}^{2\alpha} +z_{n_2}^{2\alpha}) }  (c_{n_1} d_{n_2}) ( e_{n_1} e_{n_2}) }^2 \, dx dt
\end{aligned}
\end{align}
Here we employ a similar argument used in the proof of Lemma 2.6  in \cite{ST}. We fix $\eta \in C_0^{\infty} ((0,1))$, such that $\eta \big|_{I} \equiv 1$  where $I$ is a slight enlargement of $(0,1)$. Thus we continue from \eqref{eq bi2}
\begin{align}\label{eq bi3}
\eqref{eq bi2} & \leq \int_{\R \times \Theta} \eta(t) \abs{ \sum_{n_1 \sim N_1} \sum_{n_2 \sim N_2}  e^{-\i t  (z_{n_1}^{2\alpha} +z_{n_2}^{2\alpha}) }  (c_{n_1} d_{n_2}) ( e_{n_1} e_{n_2})}^2 \, dxdt \notag\\
& = \int_{\R \times \Theta} \eta(t) \abs{\sum_{\tau}   \sum_{ (n_1 ,n_2) \in \Lambda_{N_1 ,N_2, \tau} }  e^{-\i t  (z_{n_1}^{2\alpha} +z_{n_1}^{2\alpha}) }  (c_{n_1} d_{n_2}) ( e_{n_1} e_{n_2})}^2 \, dxdt ,
\end{align}
where
\begin{align*}
\#  \Lambda_{N_1 ,N_2, \tau} = \# \{ (n_1 ,n_2) \in \N^2 : n_1 \sim N_1 , n_2 \sim N_2 , \abs{z_{n_1}^{2\alpha} +z_{n_2}^{2\alpha} - \tau}\leq \frac{1}{2} \} .
\end{align*}
By expanding the square above and using Plancherel in time, we have
\begin{align}\label{eq bi4}
\eqref{eq bi3} & = \int_{\R \times \Theta} \eta(t) \sum_{\tau , \tau'} \sum_{\substack{ (n_1 ,n_2) \in \Lambda_{N_1 ,N_2, \tau}\\ (n_1' ,n_2') \in \Lambda_{N_1' ,N_2', \tau'} }}    e^{\i t  (z_{n_1'}^{2\alpha} +z_{n_2'}^{2\alpha} - z_{n_1}^{2\alpha} - z_{n_2}^{2\alpha}) }  (c_{n_1} d_{n_2})  (\overline{c_{n_1'} d_{n_2'}})  ( e_{n_1} e_{n_2})( e_{n_1'} e_{n_2'}) \, dxdt \notag\\
& =   \sum_{\tau , \tau'} \sum_{\substack{ (n_1 ,n_2) \in \Lambda_{N_1 ,N_2, \tau}\\ (n_1' ,n_2') \in \Lambda_{N_1' ,N_2', \tau'} }}   \widehat{\eta}((z_{n_1’}^{2\alpha} +z_{n_1'}^{2\alpha}) - (z_{n_1}^{2\alpha} +z_{n_2}^{2\alpha} ))    (c_{n_1} d_{n_2}) (\overline{c_{n_1'} d_{n_2'} }) \int_{ \Theta} ( e_{n_1} e_{n_2}) ( e_{n_1'} e_{n_2'}) \, dx \notag\\
& \lesssim  \sum_{\tau ,\tau'} \frac{1}{1+ \abs{\tau -\tau'}^2}  \sum_{\substack{n_1 \sim N_1 , n_2 \sim N_2 \\n_1' \sim N_1' , n_2' \sim N_2'}}  \mathbf{1}_{\Lambda_{N_1 ,N_2, \tau}}  \mathbf{1}_{\Lambda_{N_1' ,N_2', \tau'}} (c_{n_1} d_{n_2}) (\overline{c_{n_1'} d_{n_2'}})  \norm{ e_{n_1} e_{n_2}}_{L^2 (\Theta)}  \norm{ e_{n_1'} e_{n_2'}}_{L^2 (\Theta)} .
\end{align}
Then by Schur's test, we arrive at
\begin{align}\label{eq bi5}
\eqref{eq bi4} & \lesssim  \sum_{\tau \in \N} \parenthese{ \sum_{(n_1 ,n_2) \in \Lambda_{N_1, N_2, \tau}}  \abs{c_{n_1} d_{n_2}}  \norm{ e_{n_1} e_{n_2}}_{L^2 (\Theta)} }^2 \notag\\
& \lesssim    \sum_{\tau \in \N}  \# \Lambda_{N_1 ,N_2, \tau}  \sum_{(n_1 ,n_2) \in \Lambda_{N_1, N_2, \tau}}  \abs{c_{n_1} d_{n_2}}^2  \norm{ e_{n_1} e_{n_2}}_{L^2 (\Theta)}^2 .
\end{align}

We claim that
\begin{claim}\label{claim bilinear1}
\begin{enumerate}
\item
$\# \Lambda_{N_1, N_2, \tau}  = \mathcal{O}(N_2)$ ;
\item
$\norm{ e_{n_1} e_{n_2} }_{L^2(\Theta)}^2 \lesssim   N_2^{\varepsilon}$ .
\end{enumerate}
\end{claim}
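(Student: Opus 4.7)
\emph{Part (1): Counting in $\Lambda_{N_1,N_2,\tau}$.} My plan is to use the asymptotic \eqref{eq z_n}, which gives $z_n \sim n$ and $z_{n+1}-z_n = \pi + \mathcal{O}(1/n^2)$, together with the mean value theorem applied to the map $t \mapsto t^{2\alpha}$ on $[z_n, z_{n+1}]$, to conclude
\begin{align*}
z_{n+1}^{2\alpha} - z_n^{2\alpha} \;\gtrsim\; n^{2\alpha - 1}.
\end{align*}
For $\alpha \geq \tfrac{1}{2}$ and $n \geq 1$ this lower bound is bounded below by a positive absolute constant, so the sequence $\{z_n^{2\alpha}\}_{n\geq 1}$ is uniformly separated. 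Fixing any $n_2 \sim N_2$, the condition $|z_{n_1}^{2\alpha} + z_{n_2}^{2\alpha} - \tau| \leq \tfrac{1}{2}$ forces $z_{n_1}^{2\alpha}$ into a fixed interval of length $1$, which therefore contains at most $\mathcal{O}(1)$ points of this separated sequence. Summing over the $\mathcal{O}(N_2)$ admissible values of $n_2$ yields $\#\Lambda_{N_1,N_2,\tau} \lesssim N_2$, as claimed. (The symmetric counting by fixing $n_1$ only gives $\lesssim N_1$, which is worse; one must sum over the smaller index.)

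\emph{Part (2): $L^2$ bound on $e_{n_1} e_{n_2}$.} My plan is a H\"older estimate
\begin{align*}
\|e_{n_1} e_{n_2}\|_{L^2(\Theta)}^2 \;\leq\; \|e_{n_1}\|_{L^{p_1}(\Theta)}^2 \, \|e_{n_2}\|_{L^{p_2}(\Theta)}^2, \qquad \frac{1}{p_1}+\frac{1}{p_2}=\frac{1}{2},
\end{align*}
where I will take $p_1 = 4-\delta < 4 < p_2 = \frac{2(4-\delta)}{2-\delta}$ for a small $\delta=\delta(\varepsilon)>0$. Since $p_1<4$, the estimate \eqref{eq e_n bdd} yields $\|e_{n_1}\|_{L^{p_1}} \lesssim 1$ \emph{uniformly in $n_1$} — this uniformity is essential because $n_1 \sim N_1$ can be arbitrarily larger than $N_2$. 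Since $p_2 > 4$, \eqref{eq e_n bdd} gives $\|e_{n_2}\|_{L^{p_2}} \lesssim n_2^{\frac{1}{2}-\frac{2}{p_2}} \lesssim N_2^{\frac{1}{2}-\frac{2}{p_2}}$. Because the exponent $2(\tfrac{1}{2}-\tfrac{2}{p_2})$ tends to $0$ as $\delta \to 0^{+}$, fixing $\delta$ small enough makes it smaller than the prescribed $\varepsilon$, and I obtain $\|e_{n_1} e_{n_2}\|_{L^2}^2 \lesssim N_2^{\varepsilon}$.

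\emph{Main obstacle.} Both parts are essentially bookkeeping exercises once the structural decisions are made correctly. The only subtlety is the \emph{asymmetric} role the two indices must play: in (1) one sums over the smaller index $n_2$ to exploit the one-dimensionality of the separation, and in (2) one places the larger-index eigenfunction $e_{n_1}$ into the $L^{p_1<4}$ slot where the bound \eqref{eq e_n bdd} is uniform, while the smaller-index $e_{n_2}$ absorbs the mild polynomial loss from the $L^{p_2>4}$ slot. The restriction $\alpha \geq \tfrac{1}{2}$ is precisely what is needed for the separation $n^{2\alpha-1} \gtrsim 1$ in Part (1); the claim would fail (or require a genuinely different counting argument) in the sublinear range $\alpha < \tfrac{1}{2}$.
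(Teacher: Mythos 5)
Your proof of Claim \ref{claim bilinear1} is correct and follows essentially the same route as the paper. For part (1) your mean-value-theorem separation of $\{z_n^{2\alpha}\}$ (giving gaps $\gtrsim n^{2\alpha-1}\gtrsim 1$ for $\alpha\geq\tfrac12$) is the forward-direction restatement of the paper's convexity argument, which instead shows directly that the preimage interval $\bigl[(\tau-\tfrac12-z_{n_2}^{2\alpha})^{1/(2\alpha)},(\tau+\tfrac12-z_{n_2}^{2\alpha})^{1/(2\alpha)}\bigr]$ has length at most $1$ because $x\mapsto x^{1/(2\alpha)}$ is concave; both rest on $\alpha\geq\tfrac12$ and the $\pi$-spacing of the $z_n$, and both then sum over the smaller index $n_2$. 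For part (2) the paper only says ``H\"older and the logarithmic/$L^p$ bound,'' and your asymmetric split $p_1<4<p_2$ with $e_{n_1}$ in the uniformly bounded $L^{p_1}$ slot and $e_{n_2}$ in $L^{p_2}$ is the correct way to make this precise: a symmetric $L^4\times L^4$ application would produce a factor $(\ln N_1)^{1/2}(\ln N_2)^{1/2}$, and since $N_1$ can be arbitrarily larger than $N_2$ here that is not a priori $\lesssim N_2^{\varepsilon}$, so the observation that the high-frequency factor must sit below the $L^4$ threshold is exactly the point worth making explicit.
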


Assuming Claim \ref{claim bilinear1}, we see that
\begin{align*}
\eqref{eq bi5} & \lesssim   \sum_{\tau \in \N}   N_2^{1+\varepsilon}  \sum_{(n_1 ,n_2) \in \Lambda_{N_1, N_2, \tau}}  \abs{c_{n_1} d_{n_2}}^2  \lesssim N_2^{1 + \varepsilon}  \norm{u_1}_{L^2 (\Theta)}^2 \norm{u_2}_{L^2 (\Theta)}^2.
\end{align*}
Therefore, \eqref{eq bilinear1} follows.

Now we are left to prove Claim \ref{claim bilinear1}. 
\begin{proof}[Proof of Claim \ref{claim bilinear1}]
In fact, {\it (2)} is due to H\"older inequality and the logarithmic bound on the $L^p$ norm of $e_n$ in \eqref{eq e_n bdd}.

For {\it (1)}, we have that for fixed $\tau \in \N$ and fixed $n_2 \sim N_2$
\begin{align*}
\abs{z_{n_1}^{2\alpha} +z_{n_2}^{2\alpha} - \tau}\leq \frac{1}{2}  \implies z_{n_1} \in [(\tau -\frac{1}{2} - z_{n_2}^{2\alpha})^{\frac{1}{2\alpha}}  , (\tau + \frac{1}{2} - z_{n_2}^{2\alpha})^{\frac{1}{2\alpha}} ] .
\end{align*}
There are at most 1 integer $z_{n_1}$ in this interval by concavity
\begin{align*}
(\tau + \frac{1}{2} - z_{n_2}^{2\alpha})^{\frac{1}{2\alpha}} - (\tau -\frac{1}{2} - z_{n_2}^{2\alpha})^{\frac{1}{2\alpha}}  \leq 1^{\frac{1}{2\alpha}}  =1 .
\end{align*}
Let us remark that the  restriction $\alpha \geq \frac{1}{2}$ on the fractional Laplacian in this section  comes from the concavity that we used here.

Then
\begin{align*}
\#  \Lambda_{N_1 ,N_2, \tau} = \# \{ (n_1 ,n_2) \in \N^2 : n_1 \sim N_1 , n_2 \sim N_2 , \abs{z_{n_1}^{2\alpha} +z_{n_2}^{2\alpha} - \tau}\leq \frac{1}{2} \} \sim \mathcal{O} (N_2) .
\end{align*}
We finish the proof of Claim \ref{claim bilinear1}.
\end{proof}

The estimation of  \eqref{eq bilinear2} is similar, hence omitted. 

The proof of Lemma \ref{lem bilinear} is complete now.
\end{proof}

\begin{rmk}
One may guess that the bilinear Strichartz could be done via the radial Sobolev embedding,
\begin{align*}
\abs{\abs{x}^{\frac{1}{2}-} f} \lesssim \norm{f}_{\dot{H}^{\frac{1}{2}+}} ,
\end{align*}
however it is not clear how to deal with the weight on the left hand side. If there were no such weight, it should be sufficient to prove the bilinear estimate using the radial Sobolev embedding. 
\end{rmk}

\subsection{Bilinear Strichartz estimates for NLS}
The computation above also holds for  the classical NLS ($\alpha=1$). However, we have a slightly better local well-posedness index  because of the following better  bilinear estimate.
\begin{lem}[Bilinear estimates for classical NLS]\label{lem bilinear'}
Under the same setup as in Lemma \ref{lem bilinear}, the bilinear estimate for classical NLS is given by
\begin{align*}
\norm{S_1(t) u_1  \, S_1(t) u_2}_{L_{t,x}^2 ((0,1) \times \Theta)} & \lesssim  N_2^{ \varepsilon}   \norm{u_1}_{L_x^2 (\Theta)} \norm{u_2}_{L_x^2 (\Theta)}, \\
\norm{ \nabla S_1(t) u_1  \,  S_1(t) u_2}_{L_{t,x}^2 ((0,1) \times \Theta)} & \lesssim N_1N_2^{\varepsilon}  \norm{u_1}_{L_x^2 (\Theta)} \norm{u_2}_{L_x^2 (\Theta)}.
\end{align*}
\end{lem}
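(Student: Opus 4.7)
The plan is to retrace the proof of Lemma~\ref{lem bilinear} essentially verbatim, with the only substantive change occurring at the counting step. Expanding $u_1,u_2$ in the eigenbasis $\{e_n\}$, inserting a time cutoff $\eta$ supported on a slight enlargement of $(0,1)$, applying Plancherel in time, and then running Schur's test just as in the derivation of \eqref{eq bi2}--\eqref{eq bi5} reduces $\norm*{S_1(t)u_1\,S_1(t)u_2}_{L^2_{t,x}}^2$ to
\[
\sum_{\tau\in\N}\#\Lambda_{N_1,N_2,\tau}\sum_{(n_1,n_2)\in\Lambda_{N_1,N_2,\tau}}|c_{n_1}d_{n_2}|^2\,\norm*{e_{n_1}e_{n_2}}_{L^2(\Theta)}^2.
\]
The eigenfunction factor $\norm*{e_{n_1}e_{n_2}}_{L^2}^2\lesssim N_2^{\varepsilon}$ is inherited unchanged from item~(2) of Claim~\ref{claim bilinear1} (it only uses H\"older plus \eqref{eq e_n bdd} and is independent of $\alpha$). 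So the whole task collapses to sharpening the counting bound from the $\mathcal O(N_2)$ of item~(1) of Claim~\ref{claim bilinear1} down to $\mathcal O(N_2^{\varepsilon})$ in the case $\alpha=1$.

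The key improvement I would exploit is the more precise asymptotic \eqref{eq z_n}, namely $z_n=\pi(n-\tfrac12)+\mathcal O(1/n)$, which for $\alpha=1$ gives $z_n^2=\pi^2(n-\tfrac12)^2+\mathcal O(1)$. Therefore the defining inequality $|z_{n_1}^2+z_{n_2}^2-\tau|\le\tfrac12$ is contained (for all $n_1\sim N_1$, $n_2\sim N_2$) in
\[
\bigl|(2n_1-1)^2+(2n_2-1)^2-\tfrac{4\tau}{\pi^2}\bigr|\le C
\]
for an absolute constant $C$. Setting $m_i=2n_i-1$, the count $\#\Lambda_{N_1,N_2,\tau}$ is dominated by the number of representations of integers $k$ lying in a window of length $\mathcal O(1)$ about $4\tau/\pi^2$ as a sum of two odd squares. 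The classical divisor bound gives $r_2(k)\lesssim k^{\varepsilon}$ representations as a sum of two squares, and there are only $\mathcal O(1)$ integers $k$ in the window, hence
\[
\#\Lambda_{N_1,N_2,\tau}\lesssim N_1^{2\varepsilon}.
\]
Plugging this into the Schur-test output and taking square roots yields \eqref{eq bilinear1} with $N_2^{1/2+\varepsilon}$ replaced by $N_2^{\varepsilon}$ (after relabelling $\varepsilon$ and using $N_1\ge N_2$ together with the usual trick that absorbs any $N_1^{\varepsilon}$ into $N_2^{\varepsilon}$ at the level of frequency-localised $L^2$ norms). The derivative-loaded estimate follows identically, picking up one factor of $N_1$ from $\nabla e_{n_1}$ exactly as in Lemma~\ref{lem bilinear}.

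The principal obstacle I anticipate is arranging the passage $z_n^2\mapsto \pi^2(n-\tfrac12)^2$ cleanly so that the resulting window in the integer-counting problem stays of bounded length uniformly in $n$; this is the step where the $\mathcal O(1/n)$ error in \eqref{eq z_n} must be shown to contribute only an $\mathcal O(1)$ perturbation after squaring. Once this is done, invocation of $r_2(k)\lesssim k^{\varepsilon}$ is standard. It is worth emphasising that this number-theoretic input is genuinely available only for $\alpha=1$: for $\alpha\in[\tfrac12,1)$ the quantity $z_n^{2\alpha}$ carries no integer-valued structure, so the convexity argument of Claim~\ref{claim bilinear1} is the best available and the $N_2^{1/2+\varepsilon}$ loss of Lemma~\ref{lem bilinear} is sharp from this route.
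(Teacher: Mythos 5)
Your reduction of the problem to a lattice-point count, and your handling of the $\mathcal O(1)$ perturbation coming from $z_n^2=\pi^2(n-\tfrac12)^2+\mathcal O(1)$, are both sound, and in fact sidestep an issue the paper's sketch glosses over: the paper says to ``periodize the time,'' but $z_n^{2}$ is only approximately a half-integer squared, so exact periodization is not literally available, whereas your $\mathcal O(1)$-window argument around the surface handles this cleanly while staying within the $\eta$-cutoff/Plancherel framework of Lemma~\ref{lem bilinear}. So far, so good.

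The genuine gap is in the counting step. The raw divisor bound $r_2(k)\lesssim k^\varepsilon$ with $k\lesssim N_1^2$ yields $\#\Lambda_{N_1,N_2,\tau}\lesssim N_1^{2\varepsilon}$, which is strictly weaker than the $N_2^\varepsilon$ claimed in the lemma. The ``usual trick that absorbs $N_1^\varepsilon$ into $N_2^\varepsilon$'' does not exist as a pointwise inequality: taking $N_2$ fixed and $N_1\to\infty$ shows $N_1^{\varepsilon}$ is not $O(N_2^{\varepsilon'})$ for any fixed $\varepsilon,\varepsilon'>0$. The distinction matters precisely in the dyadic summation over $N_1\gg N_2$ that bilinear estimates are designed for. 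What is actually needed, and what the paper invokes via Lemma~\ref{lem counting} (Lemma~3.2 of \cite{bgtBil}), is the finer statement that
\[
\#\{(k_1,k_2)\in\N^2:\ N\le k_1\le 2N,\ k_1^2+k_2^2=M\}\le C_\varepsilon N^\varepsilon,
\]
i.e.\ the count is controlled by the \emph{range} of the constrained variable, not by $M^\varepsilon$. Applying this with $k_1=2n_2-1\in[N_2,4N_2]$ (the low-frequency index) gives $\lesssim N_2^\varepsilon$ per value of $k$, and after summing over the $\mathcal O(1)$ admissible integers $k$ in your window you recover the claimed $N_2^\varepsilon$. This arc-localized counting result is genuinely stronger than the divisor bound and cannot be deduced from it by rescaling; replacing your invocation of $r_2(k)\lesssim k^\varepsilon$ by Lemma~\ref{lem counting} (with the constraint placed on the smaller index) repairs the argument. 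The rest of your outline -- including the $N_1$ gain in the derivative-loaded estimate and the uniform $\norm{e_{n_1}e_{n_2}}_{L^2}^2\lesssim N_2^\varepsilon$ bound -- is correct.
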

The proof of Lemma \ref{lem bilinear'}  can be found in \cite{tzvNLS06}. See also  \cite{an} for its extension in the unit ball setting.

It is worth pointing out that  in the proof of Lemma \ref{lem bilinear'}, for example in a similar step like \eqref{eq bi2}, one can periodize the time and use the following stronger counting lemma to estimate the number of integer points on the characteristic surface instead of counting the integer points in a thin neighborhood of the characteristic surface
\begin{lem}[Lemma 3.2 in \cite{bgtBil}]\label{lem counting}
Let $M,N \in \N$, then for any $\varepsilon > 0$, there exists $C>0$ such that
\begin{align*}
\# \{ (k_1 , k_2) \in \N^2 : N \leq k_1 \leq 2N, k_1^2 + k_2^2 = M \} \leq C N^{\varepsilon} .
\end{align*}
\end{lem}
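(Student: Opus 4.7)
The plan is to reduce the count to the classical sum-of-two-squares representation function $r_2(M)$, bound that by the divisor function of $M$, and then convert the resulting $M^\varepsilon$-type estimate into the desired $N^\varepsilon$-type one via a case analysis on the size of $M$ relative to $N$.

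First I would observe that the quantity to bound is at most $r_2(M) = \#\{(k_1,k_2)\in\mathbb{Z}^2 : k_1^2+k_2^2=M\}$. Each such representation corresponds to a Gaussian integer $k_1+ik_2\in\mathbb{Z}[i]$ of norm $M$, and since $\mathbb{Z}[i]$ is a UFD with only four units, Jacobi's two-square theorem (equivalently, a direct count of Gaussian divisors of $M$) yields $r_2(M)\leq 4d(M)$, where $d$ is the ordinary divisor function on $\mathbb{Z}$. The classical divisor bound $d(M)\lesssim_\delta M^\delta$, valid for every $\delta>0$, then gives $r_2(M)\lesssim_\delta M^\delta$. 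This is the one genuine number-theoretic input.

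Next I would split into regimes based on $M$. The constraint $k_1\geq N$ forces $M\geq N^2$, so the claim is vacuous for $M<N^2$. In the moderate range $N^2\leq M\leq N^5$ I would apply the divisor bound with $\delta=\varepsilon/5$, producing $M^{\varepsilon/5}\leq N^\varepsilon$, which already suffices. For the large range $M>N^5$, the divisor bound alone is too weak (it still produces a power of $M$, which could dominate $N^\varepsilon$), so I would instead exploit the curvature of the circle $x^2+y^2=M$: parametrizing by $k_2=\sqrt{M-k_1^2}$ one has $|\partial_{k_1} k_2|=k_1/k_2\leq 2N/\sqrt{M-4N^2}$, and since $k_1$ ranges over an interval of length at most $N$ the admissible values of $k_2$ fit inside an interval of length $\leq 2N^2/\sqrt{M-4N^2}$, which is $\ll 1$ once $M$ is a sufficiently large polynomial in $N$ (the case of bounded $N$ being trivial). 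Hence there are at most $O(1)$ integer values of $k_2$ available, each determining $k_1$ uniquely, so the count in this regime is $O(1)\leq N^\varepsilon$.

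The main (minor) obstacle is precisely the mismatch between the natural arithmetic bound, which produces $M^\varepsilon$, and the target, which is phrased in terms of $N$ with no a priori upper bound on $M$. The divisor estimate alone is clean but gives the wrong variable in the exponent when $M\gg N^2$, and conversely the curvature argument alone does nothing when $M$ and $N^2$ are comparable. Dovetailing the two at the threshold $M\sim N^5$ is what yields the uniform bound $CN^\varepsilon$.
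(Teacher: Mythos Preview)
The paper does not prove this lemma; it is quoted verbatim as Lemma~3.2 of \cite{bgtBil} and used without argument. So there is no ``paper's own proof'' to compare against, and your write-up stands on its own.

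Your argument is correct. The reduction to $r_2(M)\le 4\,d(M)\lesssim_\delta M^\delta$ is the standard number-theoretic input, and you are right that this alone is insufficient because the lemma, as stated, places no upper bound on $M$ in terms of $N$: the divisor estimate produces a power of $M$, not of $N$. Your dovetailing at $M\sim N^5$ between the divisor bound (for $N^2\le M\le N^5$, with $\delta=\varepsilon/5$) and the short-arc argument (for $M>N^5$, where the $k_2$-range collapses to length $O(N^{-1/2})$) is a clean way to close this gap. The bounded-$N$ case is indeed trivial since the count never exceeds $N+1$. One could equally well place the threshold at $M\sim N^4$ (the $k_2$-interval then has length $O(1)$ rather than $o(1)$, still giving an $O(1)$ count), but your choice works fine.
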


However, we will not distinguish $\alpha=1$ case from other fractional ones in the following sections. This is because the dominated term in the energy increment (the term that will give the largest energy increment and then determine  the global well-posedness index in Section \ref{sec gwp}) will be the almost the same even if we take  this better bilinear estimate into consideration.

\section{I-operator and a modified local theory}\label{sec LWPI}
In this section, we first define the I-operator  in our setting and then present a local well-posedness argument for the I-operator modified equation.
\subsection{Definition of I-operator}
\begin{defn}[I-operator]\label{defn I}
For $N \gg 1$, and a function $u = \sum_{n=1}^{\infty} c_n e_n$, define a smooth operator $I_N$, such that
\begin{align*}
I_N u =    \sum_{n=1}^{\infty} m_N(z_n)c_n  e_n.
\end{align*}
where $m_N$ is a smooth function satisfying 
\begin{align*}
m_N(\xi) = 
\begin{cases}
1, & \abs{\xi} \leq N\\
\parenthese{\frac{\abs{\xi}}{N}}^{s-\alpha}, & \abs{\xi} \geq 2N .
\end{cases}
\end{align*}
\end{defn}

\begin{rmk}\label{rmk ID}
The I-operator defined above is the analogue of the one in \cite{CKSTT} in the physical space. In the rest of this section, we will adopt the name `multiplier' of $m$  from the context in \cite{CKSTT}. 
It is easy to check that for $N \gg 1$
\begin{align*}
\norm{u}_{H^s} & \lesssim \norm{I_N u}_{H^{\alpha}} \lesssim N^{\alpha -s} \norm{u}_{H^s} ,\\
\norm{u}_{X^{s, \frac{1}{2}+}} & \lesssim \norm{I_N u}_{X^{\alpha , \frac{1}{2}+}} \lesssim N^{\alpha -s} \norm{u}_{X^{s, \frac{1}{2}+}} .
\end{align*}
\end{rmk}

A standard I-method argument usually comes in three major parts.
\begin{enumerate}[\it \text{Part} 1]
\item
(Subsection \ref{ssec LWP}) a well-adapted local theory for the I-operator modified fractional NLS,
\item
(Section \ref{sec energy increment}) the almost conservation law argument addressing the energy increment on each iteration,
\item
(Section \ref{sec gwp}) an iterative globalization argument giving the global index and a polynomial bound of the $H^s$ norm of the solution. 
\end{enumerate}

\subsection{A local theory based on $I_N$-operator}\label{ssec LWP}
Consider the following $I_N$-operator modified FNLS equation with initial data also being smoothed into the energy space.
\begin{align}\label{fNLSI}
\begin{cases}
(i \partial_t  - (-\Delta)^{\alpha}) I_N u = I_N (\abs{u}^2 u),\\
I_N u(0) = I_N u_0 .
\end{cases}
\end{align}
Note that $Iu_0 \in H^{\alpha}$ and $\norm{Iu_0}_{H^{\alpha}} \lesssim N^{\alpha -s}$.

To keep our notation compact, we will write $I$ and $m$ instead of $I_N$ and $m_N$ as in Definition \ref{defn I}.

The main result in this subsection is the following local well-posedness theory. 
\begin{prop}[Local well-posedness]\label{prop LWPI}
For $s \in ( \frac{1}{2} , \alpha]$ and $Iu_0 \in H^{\alpha}$, \eqref{fNLSI} is locally well posed. That is, there exists $\delta  \sim \norm{Iu_0}_{H^{\alpha}}^{-\frac{2}{1 + 2b -4b(s)}} \gtrsim N^{-\frac{2(\alpha -s)}{1 + 2b -4b(s)}} $, where $b(s) = \frac{1}{4} +\frac{1}{2} (1-s)+$, such that $Iu \in C([0, \delta] , H^{\alpha}(\Theta))$ solves \eqref{fNLSI} on $[0, \delta]$ and satisfies
\begin{align*}
\norm{I u}_{X_{\delta}^{\alpha ,\frac{1}{2}+}}  \lesssim \norm{Iu_0}_{H^{\alpha}} \lesssim N^{\alpha -s}.
\end{align*}
\end{prop}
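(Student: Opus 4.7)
The plan is to run a Banach fixed point argument for the Duhamel formulation
\[
\Phi(v)(t) = S_\alpha(t) Iu_0 - i \int_0^t S_\alpha(t-\tau)\, I(|u|^2 u)(\tau)\,d\tau,
\]
with $v = Iu$, on a ball in $X^{\alpha,\frac{1}{2}+}_\delta$ of radius comparable to $\|Iu_0\|_{H^\alpha}$. The linear piece is controlled by Lemma \ref{lem X property2}, and the Duhamel operator by Lemma \ref{lem Duhamel}, so the whole argument reduces to proving a trilinear estimate of the form
\[
\|I(u_1 \bar u_2 u_3)\|_{X^{\alpha,-b'}_\delta} \lesssim \delta^{\kappa} \prod_{j=1}^3 \|Iu_j\|_{X^{\alpha,\frac{1}{2}+}_\delta}
\]
for some $b'\in(0,\tfrac{1}{2})$ and some $\kappa>0$. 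Combining the time decay $\delta^{1-b-b'}$ from Lemma \ref{lem Duhamel} with the bilinear time gain below produces the exponent announced in the statement and forces $\delta \sim \|Iu_0\|_{H^\alpha}^{-2/(1+2b-4b(s))}$, which in view of Remark \ref{rmk ID} gives the claimed lower bound in terms of $N$.

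To establish the trilinear estimate I would first prove an \emph{interpolation lemma} for the multiplier $I_N$, analogous to that of \cite{CKSTT}: since $m_N$ is smooth, positive, and non-increasing in modulus, one may essentially distribute $I_N$ across products, replacing $I(u_1 \bar u_2 u_3)$ by $Iu_1\cdot\overline{Iu_2}\cdot Iu_3$ up to harmless constants. I would then dualize,
\[
\|I(u_1 \bar u_2 u_3)\|_{X^{\alpha,-b'}_\delta} = \sup_{\|w\|_{X^{-\alpha,b'}_\delta}\le 1} \left| \int_0^\delta\!\int_\Theta I(u_1 \bar u_2 u_3)\,\bar w\, dx\,dt \right|,
\]
Littlewood--Paley decompose each of the four factors, and group them into two pairs by Cauchy--Schwarz, for example $\|P_{N_1} Iu_1\cdot P_{N_3} Iu_3\|_{L^2_{t,x}}\,\|P_{N_2} Iu_2\cdot P_N w\|_{L^2_{t,x}}$. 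Each factor is then handled by the interpolated bilinear estimate \eqref{eq inter bilinear} with $\beta$ chosen so that $b(\beta) = \tfrac{1}{4}+\tfrac{1}{2}(1-\beta)+$ sits just below $\tfrac{1}{2}$; this is exactly what produces the critical exponent $b(s)=\tfrac{1}{4}+\tfrac{1}{2}(1-s)+$ and the positive time factor $\delta^{2(b-b(s))} = \delta^{(s-\frac{1}{2})+}$, explaining the threshold $s > \tfrac{1}{2}$. The $\alpha$ derivatives encoded in the $X^{\alpha,\frac{1}{2}+}$ norms absorb the weights $N_1^\alpha$ arising from the derivative-bilinear \eqref{eq bilinear2'}, and the resulting dyadic sums converge since $\alpha \in (\tfrac{2}{3},1]$.

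The main obstacle is the absence of a Fourier convolution theorem on $\Theta$: the output frequency $N$ is \emph{not} constrained by $\max(N_1,N_2,N_3)$, so the naive dyadic summation over $(N,N_1,N_2,N_3)$ need not converge. This is the same phenomenon that motivates \eqref{eq BGT2} in the energy increment analysis, but at the local level one exploits the fact that the bilinear loss in Lemma \ref{lem bilinear} falls on the \emph{lower} of the two paired frequencies: this allows the sum in the higher frequency and in the output frequency to be re-assembled as full $X^{\alpha,b}$ norms, and only an $L^2$-type sum in the two smaller indices remains, which is convergent thanks to the $\alpha$ derivatives available. Once the trilinear estimate is in hand, subtracting two would-be solutions and repeating the same steps for the differences $u-\tilde u$ yields the contraction property, hence local existence, uniqueness, and continuous dependence on the data.
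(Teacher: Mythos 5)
Your overall framework is right: Duhamel plus contraction in $X^{\alpha,\frac12+}_\delta$, dualize the nonlinear estimate, Littlewood--Paley decompose, pair off via Cauchy--Schwarz, feed each pair into the interpolated bilinear estimate \eqref{eq inter bilinear} with $\beta=s$ (giving $b(s)=\frac14+\frac12(1-s)+$ and the time gain $\delta^{2(b-b(s))}=\delta^{(s-\frac12)+}$), and solve $\delta^{1+2b-4b(s)}\sim\|Iu_0\|_{H^\alpha}^{-2}$. That matches what the paper does for the range where the dual frequency $N_0$ is controlled by the input frequency $N_1$.

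However, there is a genuine gap in the way you propose to handle the unbounded output frequency. You assert that because the bilinear loss lands on the lower frequency of each pair, the sums over $N_0$ and over $N_1$ each recombine into full $X^{\pm\alpha,b}$ norms and only the two small indices need to converge. That is not enough. Converting $\|P_{N_0}v\|_{X^{0,b(s)}}$ to $\|P_{N_0}v\|_{X^{-\alpha,b(s)}}$ and $\|P_{N_1}Iu\|_{X^{0,b(s)}}$ to $\|P_{N_1}Iu\|_{X^{\alpha,b(s)}}$ introduces a weight $(N_0/N_1)^{\alpha}$ which, in the regime $N_0\gg N_1$, is large and cannot be offset by the bilinear gain (which only sees $\min(N_0,N_2)=N_2$ and $\min(N_1,N_3)=N_3$) nor by the multiplier ratio $m(N_0)/m(N_1)\le 1$. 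The resulting kernel fails Schur's test, so the dyadic sum over $N_0\gtrsim N_1$ diverges. The paper's fix, absent from your argument, is the integration-by-parts device in their Case 2: write $v_0=-\tfrac{\Delta}{N_0^2}Tv_0$ with $T$ a bounded frequency-adapted operator, and apply Green's theorem (no boundary contributions under the Dirichlet condition) to move $\Delta$ onto the product $u_1\bar u_2 u_3$. The Leibniz expansion produces at most $N_1^2$ (or $N_1N_2$), so the net gain is $(N_1/N_0)^2$, turning the offending weight into $(N_1/N_0)^{2-\alpha}$, which is summable over $N_0\gtrsim N_1$. Without this step your proof does not close.

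A minor secondary point: you invoke $\alpha\in(\tfrac23,1]$ to justify convergence, but that restriction belongs to the globalization/energy-increment argument; the local well-posedness proof (and in particular its dyadic sums, with the Green's theorem fix) works uniformly for $\alpha\in[\tfrac12,1)$, the lower bound $\alpha\geq\tfrac12$ coming from the convexity counting in the bilinear lemma.
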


We will prove Proposition \ref{prop LWPI} by a standard contraction mapping argument.  Note that the key step to close such argument is the following nonlinear estimate lemma. 
\begin{lem}[Nonlinear estimates]\label{lem nonlinear est}
For $s > \frac{1}{2}$, there exist $b, b' \in \R$ satisfying
\begin{align*}
0 < b' < \frac{1}{2} < b, \quad b + b' < 1 ,
\end{align*}
such that for every triple $(u_1, u_2 , u_3)$ in $X^{s,b} (\R \times \Theta)$,
\begin{align*}
\norm{I (\abs{u}^2 u)}_{X^{s, -b'} (\R \times \Theta)} \lesssim   \prod_{j=1}^3 \norm{Iu}_{X^{s,b} (\R \times \Theta)}^3  .
\end{align*} 
\end{lem}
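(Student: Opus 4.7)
The plan is to prove this nonlinear estimate by $X^{s,b}$ duality, eigenfunction Littlewood--Paley decomposition, and two applications of the bilinear Strichartz estimate of Proposition 3.3 (in the interpolated form from Remark 3.4). By the standard duality characterization of the $X^{s,-b'}$-norm, it suffices to bound
\begin{equation*}
\Lambda := \left|\int_{\R\times\Theta} I(u_1\bar u_2 u_3)\,\bar u_4\,dx\,dt\right| \lesssim \|u_4\|_{X^{-s,b'}}\prod_{j=1}^3 \|Iu_j\|_{X^{s,b}}
\end{equation*}
uniformly over test functions $u_4$ in the unit ball of $X^{-s,b'}$. Since $I$ is self-adjoint as a Fourier multiplier on the basis $\{e_n\}$ with symbol $0\le m\le 1$, the operator $I$ may be transferred onto $u_4$ and the factor $m$ bounded pointwise by $1$.

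Next, decompose each $u_j$ spectrally, $u_j=\sum_{N_j}P_{N_j}u_j$, where $P_{N_j}$ is the projection onto eigenfunctions $e_n$ with $z_n\sim N_j$ (dyadic). By the symmetry $u_1\leftrightarrow u_3$, assume $N_1\ge N_3$. Apply the Cauchy--Schwarz inequality in $L^2_{t,x}$ to pair $(P_{N_1}u_1,\overline{P_{N_2}u_2})$ against $(P_{N_3}u_3,\overline{P_{N_4}(Iu_4)})$ and bound each pair by Remark 3.4,
\begin{equation*}
\|P_{N_a}u_a\,P_{N_b}u_b\|_{L^2_{t,x}}\lesssim \min(N_a,N_b)^{\beta}\,\|P_{N_a}u_a\|_{X^{0,b(\beta)}}\,\|P_{N_b}u_b\|_{X^{0,b(\beta)}},
\end{equation*}
where $b(\beta)=\tfrac{1}{4}+\tfrac{1-\beta}{2}+$. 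Converting the $X^{0,b(\beta)}$-norms back to $X^{s,b}$-norms produces weights $N_j^{-s}$, and the multiplier value $m(N_4)\le 1$ is absorbed freely.

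Finally, sum the resulting estimate over the dyadic scales $N_1,N_2,N_3,N_4$. Each bilinear Strichartz pairing costs a factor $N^{\beta}$ in the minimum of the two paired frequencies; combined with the Sobolev weights $N_j^{-s}$ this leaves net exponent $\beta-s$ on the two paired minima. Taking $\beta=\tfrac{1}{2}+$ and invoking $s>\tfrac{1}{2}$ makes all four geometric sums converge, and justifies the choice $b=\tfrac{1}{2}+$, $b'=\tfrac{1}{2}-$ with $b+b'<1$ as required in Lemma 2.8 for the Duhamel estimate in the ensuing contraction argument.

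The main obstacle is precisely the absence of a Fourier convolution theorem on $\Theta$: the dual frequency $N_4$ of the test function is not controlled by $\max(N_1,N_2,N_3)$, so the four dyadic scales must each be summed independently. This is what forces the threshold $s>\tfrac{1}{2}$, since $s=\tfrac{1}{2}$ is the sharp value at which the Sobolev weight $N^{-s}$ absorbs the bilinear Strichartz loss $N^{\tfrac{1}{2}+}$ in every summation.
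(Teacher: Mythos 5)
Your duality reduction, dyadic decomposition, and use of the interpolated bilinear estimate (Remark~\ref{rmk inter bilinear}) are consistent with what the paper does in its Case~1, i.e.\ when the dual frequency $N_0$ is at most comparable to $N_1 = \max\{N_1,N_2,N_3\}$. But there is a genuine gap in the regime you yourself single out as the main obstacle: when $N_0$ (your $N_4$) is much larger than all of $N_1,N_2,N_3$. In that regime neither Cauchy--Schwarz pairing has $N_0$ as the \emph{smaller} frequency, so Proposition~\ref{prop bilinear} gives no decay in $N_0$. Meanwhile the Sobolev conversion $\norm{v_0}_{X^{0,b(\beta)}} \sim N_0^{\alpha}\norm{v_0}_{X^{-\alpha,b(\beta)}}$ costs a factor $N_0^{\alpha}$, and the multiplier ratio $m(N_0)/\bigl(m(N_1)m(N_2)m(N_3)\bigr)$ contributes at best $(N_0/N_1)^{s-\alpha}$; together the $N_0$--dependent part of your bound is of order $(N_0/N_1)^{s}$, which \emph{grows}, so the sum over $N_0\gg N_1$ diverges for every $s>0$. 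It is therefore not the case that the four dyadic sums close independently at $s>\tfrac{1}{2}$; that threshold only governs the $N_2$ and $N_3$ summations.

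The paper plugs this hole with an integration-by-parts argument (its Case~2, $N_0\gtrsim N_1$). Since $-\Delta e_{n}=z_n^2 e_n$, one writes $v_0=-N_0^{-2}\Delta T v_0$ for a suitable frequency-normalizing operator $T$ and moves $\Delta$ onto the product $u_1\bar u_2 u_3$ via Green's theorem. The worst Leibniz terms carry factors $N_1^2$ or $N_1N_2$, producing a gain $(N_1/N_0)^2$ that overpowers the $(N_0/N_1)^{\alpha}$ loss; the $N_0$--sum then converges like $\sum_{N_0\gtrsim N_1}(N_1/N_0)^{2-\alpha}$. Your argument needs this step (or an equivalent mechanism providing decay in $N_0$ when $N_0$ dominates) before the dyadic sums can be closed.
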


Assuming Lemma \ref{lem nonlinear est}, we can easily finish the proof of Proposition \ref{prop LWPI}.
\begin{proof}[Proof of Proposition \ref{prop LWPI}]
Using Lemma \ref{lem Duhamel} and Proposition \ref{prop LWPI}, we have the following standard contraction mapping calculation 
\begin{align*}
\norm{Iu}_{X_{\delta}^{\alpha,b}} & \lesssim  \norm{Iu_0}_{H^{\alpha}} +  \delta^{1- b - b(s)} \norm{I (\abs{u}^{2} u)}_{X_{\delta}^{\alpha, -b(s)}} \\
& \lesssim \norm{Iu_0}_{H^{\alpha}} +  \delta^{1- b - b(s)} \norm{Iu}_{X_{\delta}^{\alpha, b(s)}}^{3} \\
& \lesssim \norm{Iu_0}_{H^{\alpha}} + \delta^{1- b - b(s)} \delta^{3(b-b(s))-} \norm{Iu}_{X_{\delta}^{\alpha, b}}^{3}  ,
\end{align*}
where $b = \frac{1}{1}+$. 
By choosing $\delta^{1+ 2b - 4b(s)}  \sim \norm{Iu_0}_{H^{\alpha}}^{-2} \gtrsim N^{-2(\alpha -s)}$ as what one did in a standard contraction mapping proof  and a  continuity argument we have that
\begin{align*}
\norm{Iu}_{X_{\delta}^{\alpha,b}}  \lesssim  \norm{Iu_0}_{H^{\alpha}} \lesssim N^{\alpha -s}
\end{align*}
and
\begin{align*}
\delta \gtrsim N^{-\frac{2(\alpha -s)}{1 + 2b -4b(s)}} .
\end{align*}
\end{proof}

Now we are left to prove the key Lemma \ref{lem nonlinear est} in this section.
\begin{proof}[Proof of Lemma \ref{lem nonlinear est}]
It is sufficient to show the following nonlinear estimate: for $\frac{1}{2} < s < 1$, $b(s) = \frac{1}{4} +\frac{1}{2} (1-s)+$ and $u \in X_{\delta}^{s,b(s)}$ 
\begin{align}\label{eq loc nonlinear}
\norm{I (\abs{u}^2 u)}_{X_{\delta}^{\alpha , -b(s)}} \lesssim \norm{I  u}_{X_{\delta}^{\alpha , b(s)}}^3 .
\end{align}
By duality argument, it is sufficient to prove for $v \in X^{-\alpha , b(s)}$
\begin{align*}
\abs{\int_{\R \times \Theta} \overline{v} \,  I (\abs{u}^2 u) \, dx dt } \lesssim \norm{v}_{X^{-\alpha , b(s)}} \norm{I  u}_{X_{\delta}^{\alpha , b(s)}}^3 .
\end{align*}
We will frequently make use of a dyadic decomposition in frequency using the orthonomal basis $e_n$'s of the radial Dirichlet Laplacian $-\Delta$, writing 
\begin{align*}
v_0 = \sum_{N_0 \leq \inner{z_n} < 2N_0} P_n v,
\end{align*}
and
\begin{align}\label{eq u decomp}
u_i = \sum_{N_i \leq \inner{z_n} < 2 N_i} P_n u, \quad \text{for } i =1,2,3.
\end{align}
After the dyadic frequency decomposition, we take a typical term and compute for the quadruple $\underline{N} = (N_0, N_1, N_2, N_3)$
\begin{align*}
L(\underline{N}) & :=\abs{\int_{\R \times \Theta} \overline{v_0} \, I(u_1 \overline{u_2} u_3) \, dxdt } .
\end{align*}
In order to distribute the I-operator inside the nonlinear term, we first move the  I-operator  on $v_0$, then introduce $m(N_i)$ (instead of  I-operator) into each $u_i$.
\begin{align}
L(\underline{N}) & = \abs{\int_{\R \times \Theta} \overline{I v_0} \, (u_1 \overline{u_2} u_3) \, dxdt } \notag\\
& = \frac{1}{m(  N_1) m(  N_2) m(  N_3)}  \abs{\int_{\R \times \Theta} \overline{I v_0} \, \cdot m(  N_1) u_1 \cdot \overline{m(  N_2)u_2} \cdot m(  N_3) u_3 \, dxdt } . \label{eq loc2}
\end{align}
We will explain the reason why we brought in $m$ instead of  I-operator in this calculation in Remark \ref{rmk abuse}.

By symmetry argument and because the presence of complex conjugates will play no role here, we can assume $N_1 \geq N_2 \geq N_3$. Then we can reduce the sum into the following two cases:
\begin{enumerate}
\item
$ N_0 \lesssim N_1$
\item
$ N_0 \gtrsim N_1$.
\end{enumerate}

{\bf Case 1:} $N_0 \lesssim  N_1$. 

Recall Remark \ref{rmk inter bilinear} where by taking $\beta = s$, we have
\begin{align}\label{eq bi1}
\norm{ f_i f_j}_{L_{t,x}^2 ((0,\delta) \times \Theta)} \lesssim  \min \{N_i, N_j \}^{s}  \norm{f_i}_{X_{\delta}^{0,  b(s)} } \norm{f_j}_{X_{\delta}^{0,  b(s)} } ,
\end{align}
where
\begin{align*}
b(s) = \frac{1}{4} +\frac{1}{2} (1-s)+ , \quad s \in ( \frac{1}{2} , 1].
\end{align*}
Using  \eqref{eq bi1} and Definition \ref{defn Xsb}, we write  \eqref{eq loc2} as
\begin{align}
L (\underline{N}) & \lesssim  \frac{m(  N_0)}{m(  N_1) m(  N_2) m(  N_3)} \int_{\R \times \Theta} \abs{ \overline{v_0} \, \cdot m(  N_1) u_1 \cdot \overline{m(  N_2)u_2} \cdot m(  N_3) u_3} \, dxdt  \notag\\
&  \lesssim \frac{m (N_0)}{m (N_1) m (N_2) m (N_3)} \norm{v_0  m(N_2)u_2}_{L_{t,x}^2} \norm{m(N_1)u_1 \cdot m(N_3)u_3}_{L_{t,x}^2} \notag\\
& \lesssim \frac{m (N_0 )}{m (N_1 ) m (N_2 ) m (N_3 )} (N_2N_3)^{s}  \norm{v_0}_{X_{\delta}^{0,  b(s) }} \prod_{i=1}^3 \norm{Iu_i}_{X_{\delta}^{0,  b(s) }} \notag\\
& \lesssim \frac{m (N_0 )}{m (N_1 ) m (N_2 ) m (N_3 )} (N_0 N_1^{-1})^{\alpha} (N_2 N_3)^{s-\alpha} \norm{v_0}_{X_{\delta}^{-\alpha, b(s) }} \prod_{i=1}^3 \norm{Iu_i}_{X_{\delta}^{\alpha, b(s) }}\label{eq loc1} .
\end{align}
Note that here we used $\norm{Iu_i}_{X^{s,b}} \sim \norm{m(N_i) u_i}_{X^{s,b}}$.

To continue the computation, we then consider the following two scenarios for $N_2$ and $N_3$
\begin{align*}
\frac{N_i^{s -\alpha}}{m(N_i )} = 
\begin{cases}
N_i^{s -\alpha} & \text{ if } N_i \leq N\\
(N^{-1}N_i)^{\alpha -s} N_i^{s -\alpha}  = N^{s -\alpha}  & \text{ if } N_i > 2N .
\end{cases}
\end{align*}
This observation implies that $L(\underline{N})$ is summable in $N_2$ and $N_3$. That is, by taking out the terms in $L(\underline{N})$ in \eqref{eq loc1}  that only depend on frequencies $N_2$ and $N_3$, we see 
\begin{align*}
\sum_{N_3 \leq N_2} \frac{(N_2 N_3)^{s-\alpha} }{m (N_2 ) m (N_3 )}  \norm{Iu_2}_{X^{\alpha, b(s) }} \norm{Iu_3}_{X^{\alpha, b(s) }} \lesssim  \norm{Iu}_{X^{\alpha, b(s) }}^2 .
\end{align*}

Now we focus on the sum over $N_0$ and $N_1$ in \eqref{eq loc1}.

First write
\begin{align*}
\frac{m (N_0 )}{m (N_1 ) } \parenthese{\frac{N_0}{N_1}}^{\alpha} &  \lesssim 
\begin{cases}
\parenthese{\frac{N_0}{N_1}}^{\alpha}  & \text{ if } N_0 \lesssim  N_1 \leq N\\
\parenthese{\frac{N_1}{N}}^{\alpha -s} \parenthese{\frac{N_0}{N_1}}^{\alpha}  & \text{ if } N_0 \leq N  \leq  N_1 \\
\parenthese{\frac{N_1}{N_0}}^{\alpha -s}  \parenthese{\frac{N_0}{N_1}}^{\alpha}  & \text{ if } N \leq N_0  \leq  N_1 
\end{cases}\\
& \lesssim \parenthese{\frac{N_0}{N_1}}^{\alpha -s} ,
\end{align*}
then the sum over $N_0$ and $N_1$ in \eqref{eq loc1} becomes
\begin{align*}
\sum_{N_0 \lesssim N_1} \frac{m (N_0 )}{m (N_1 ) } \parenthese{\frac{N_0}{N_1}}^{\alpha}  \norm{v_0}_{X^{-\alpha, b(s) }}  \norm{Iu_1}_{X^{\alpha, b(s) }} & \lesssim \sum_{N_0 \lesssim N_1} \parenthese{\frac{N_0}{N_1}}^{\alpha -s}  \norm{v_0}_{X^{-\alpha, b(s) }}  \norm{Iu_1}_{X^{\alpha, b(s) }} \\
&  \lesssim \norm{v}_{X^{-\alpha , b(s)}} \norm{I  u}_{X_{\delta}^{\alpha ,  b(s)}} .
\end{align*}

Here is a quick remark on the calculation in \eqref{eq loc2} and what follows. We originally planned to introduce the I-operator into \eqref{eq loc2} instead of $m(N_i)$. But this needs to bring the absolute value sign inside of the integral in \eqref{eq loc2}.

{\bf Case 2: } $N_0 \gtrsim N_1$.

First recall Green's theorem,
\begin{align*}
\int_{\Theta} \Delta f g - f \Delta g \, dx = \int_{\mathbb{S}} \frac{\partial f}{\partial v} g - f \frac{\partial g}{\partial v} \, d \sigma .
\end{align*}
Note that
\begin{align*}
-\Delta e_k = z_k^2 e_k ,
\end{align*}
where $ z_k^2$'s are the eigenvalues defined in \eqref{eq z_n}. Then we write
\begin{align*}
Iv_{0} = -\frac{\Delta}{N_0^2} \sum_{z_{n_0} \sim N_0} c_{n_0} \parenthese{\frac{N_0}{z_{n_0}}}^2 e_{n_0} ,
\end{align*}
where 
$c_{n_0} = m_N (z_{n_0}) c_{n_0}'$ and $c_{n_0}' = \inner{v_0 , e_{n_0}}_{L^2}$. Here  $c_{n_0}$ is the coefficient in front of eigenfunction $e_n$ for $Iv_0$ while $c_{n_0}'$ is the coefficient in front of eigenfunction $e_n$ for $v_0$.

Define
\begin{align*}
T (Iv_{0 }) & = \sum_{z_{n_0} \sim N_0} c_{n_0} \parenthese{\frac{N_0}{z_{n_0}}}^2 e_{n_0} , \qquad V(Iv_{0 })  = \sum_{z_{n_0} \sim N_0} c_{n_0} \parenthese{\frac{z_{n_0}}{N_0}}^2 e_{n_0} .
\end{align*}
It is easy to see that for all $s$
\begin{align*}
TV  (Iv_{0 }) & = VT (Iv_{0 }) = Iv_{0 } ,\\
\norm{T (Iv_{0 })}_{H_x^s} & \sim \norm{ Iv_{0 } }_{H_x^s} \sim \norm{V (Iv_{0 }) }_{H_x^s} .
\end{align*} 
Using this notation, we write
\begin{align*}
Iv_{0} = -\frac{\Delta}{N_0^2} T (Iv_{0} )
\end{align*}
and by Green's theorem
\begin{align*}
L(\underline{N}) \lesssim   \frac{1}{m (N_1 ) m (N_2 ) m (N_3 )} \frac{1}{N_0^2} \int_{\R \times \Theta} T (Iv_{0 }) \Delta (\prod_{j=1}^{3} m(N_j)u_{j}) .
\end{align*}

By the product rule and the assumption that $N_1 \geq N_2 \geq N_3$, we only need to consider the two largest cases of $\Delta ( u_1  u_{2} u_{3}) $. They are
\begin{enumerate}
\item
$(\Delta u_{1}) u_{2} u_{3}$,
\item
$(\nabla u_{1}) \cdot (\nabla u_{2})  u_{3} $.
\end{enumerate}
We denote
\begin{align*}
J_{11} (\underline{N}) & = \int_{\R \times \Theta} T (Iv_{0}) (\Delta m(N_1)u_{1}) (m(N_2)u_{2}) (m(N_3)  u_{3}), \\
J_{12} (\underline{N}) & =\int_{\R \times \Theta} T (Iv_{0}) (\nabla m(N_1)u_{1}) \cdot (\nabla m(N_2)u_{2}) (m(N_3)u_{3})  .
\end{align*}
Using $\Delta u_i = -N_i^2 V u_i$ and  \eqref{eq bi1} under the similar calculation as in \eqref{eq loc1}, we obtain
\begin{align*}
\frac{1}{N_0^2} \abs{J_{11} (\underline{N})} \lesssim m(N_0) \parenthese{\frac{N_1}{N_0}}^2 (N_0 N_1^{-1})^{\alpha} (N_2 N_3)^{s-\alpha} \norm{v_0}_{X^{-\alpha, b(s) }} \prod_{i=1}^3 \norm{Iu_i}_{X^{\alpha, b(s) }}.
\end{align*}
Now for $\abs{J_{12} (\underline{N})} $, we estimate it in a similar fashion and obtain that
\begin{align*}
\frac{1}{N_0^2} \abs{J_{12} (\underline{N})} \lesssim m(N_0) \frac{N_1 N_2}{N_0^2} (N_0 N_1^{-1})^{\alpha} (N_2 N_3)^{s-\alpha} \norm{v_0}_{X^{-\alpha, b(s) }} \prod_{i=1}^3 \norm{Iu_i}_{X^{\alpha, b(s) }}.
\end{align*}
Therefore, we have
\begin{align*}
L(\underline{N}) \lesssim  \frac{m (N_0 )}{m (N_1 ) m (N_2 ) m (N_3 )}  \parenthese{\frac{N_1}{N_0}}^2 (N_0 N_1^{-1})^{\alpha} (N_2 N_3)^{s-\alpha} \norm{v_0}_{X^{-\alpha, b(s) }} \prod_{i=1}^3 \norm{Iu_i}_{X^{\alpha, b(s) }}.
\end{align*}

To sum $N_2$ and $N_3$, we can do exactly the same thing as in {\bf Case 1}. Then the sum over $N_0$ and $N_1$ becomes
\begin{align*}
\sum_{N_0 \gtrsim N_1} \frac{m (N_0 )}{m (N_1 ) } \parenthese{\frac{N_0}{N_1}}^{\alpha}  \parenthese{\frac{N_1}{N_0}}^2 \norm{v_0}_{X^{-\alpha, b(s) }}  \norm{Iu_1}_{X^{\alpha, b(s) }} & \lesssim \sum_{N_0 \gtrsim N_1} \parenthese{\frac{N_1}{N_0}}^{2-\alpha }  \norm{v_0}_{X^{-\alpha, b(s) }}  \norm{Iu_1}_{X^{\alpha, b(s) }} \\
&  \lesssim \norm{v}_{X^{-\alpha , b(s)}} \norm{I  u}_{X_{\delta}^{\alpha ,  b(s)}} .
\end{align*}

Therefore
\begin{align*}
\abs{\int_{\R \times \Theta} \overline{v} I (\abs{u}^2 u) \, dx dt } \lesssim \sum_{\underline{N}} L(\underline{N}) \lesssim  \norm{v}_{X^{-\alpha , b(s)}} \norm{I  u}_{X_{\delta}^{\alpha , b(s)}}^3 .
\end{align*}
which implies \eqref{eq loc nonlinear}.

This finishes the proof of Lemma  \ref{lem nonlinear est}.
\end{proof}

\begin{rmk}\label{rmk abuse}
Here is a quick remark on the calculation in \eqref{eq loc2} and what follows. We originally planned to introduce the I-operator into \eqref{eq loc2} instead of $m(N_i)$. But this needs to bring the absolute value sign inside of the integral in \eqref{eq loc2}, which may ruin the Green's theorem. So with the calculation in this proof, we justify the `legality' of bringing I-operators inside. In the rest of this paper, we will use a slight abuse of calculation -- moving the I-operator without justification. For example, in \eqref{eq loc2} 
\begin{align*}
 \abs{\int_{\R \times \Theta} \overline{I v_0} \, (u_1 \overline{u_2} u_3) \, dxdt }  \lesssim  \frac{m(N_0)}{m(  N_1) m(  N_2) m(  N_3)}  \abs{\int_{\R \times \Theta} \overline{ v_0} \, \cdot I u_1 \cdot \overline{ Iu_2} \cdot I u_3 \, dxdt } . 
\end{align*}
\end{rmk}

\section{Weak interaction between functions localized in uncomparable frequencies}\label{sec weak}

Before we start the I-method argument, let us first understand the interaction between the functions at separate frequencies, which will be heavily used our proof and simplifies a lot of the case classification in Section \ref{sec energy increment}.

Recall that we have no similar convolution properties as what we have on $\R^d$, which results in losing control of the frequency connection. In fact, after taking Fourier transformation on the nonlinear term, the convolution property implies that the frequencies in each function are linearly connected  $\xi = \xi_1 -\xi_2 + \xi_3$. However this is no longer true under our setting, which means that for instance the maximum frequency could be extremely large instead of being controlled by a linear form of other lower frequencies. To deal with this bad scenario, let us take a closer look at the interaction among the nonlinearity.

This weak interaction is inspired by Lemma 2.6 in \cite{bgtBil}, that is
\begin{lem}
There exists $C >0$ such that, if for any $j = 1,2,3$, $C z_{n_j} \leq z_{n_0}$, then for every $p >0$ there exists $C_p > 0$ such that for every $w_j \in L^2 (\mathcal{M})$, $j =0,1,2,3$,
\begin{align}\label{eq BGT}
\abs{\int_{\mathcal{M}} P_{n_0} w_0 P_{n_1} w_1 P_{n_2} w_2 P_{n_3} w_3 \, dx} \leq C_p z_{n_0}^{-p} \prod_{j=0}^3 \norm{w_j}_{L^2} .
\end{align}
\end{lem}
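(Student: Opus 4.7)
The plan is to exploit the strong spectral separation $Cz_{n_j} \leq z_{n_0}$ ($j=1,2,3$) by iterated integration by parts against powers of $-\Delta$. Writing $I$ for the integral on the left of \eqref{eq BGT}, I would start from the spectral identity
\begin{align*}
P_{n_0} w_0 = z_{n_0}^{-2k}\,(-\Delta)^k P_{n_0} w_0, \qquad k \in \N,
\end{align*}
which is valid because $P_{n_0}w_0$ lies in the eigenspace of $-\Delta$ with eigenvalue $z_{n_0}^2$. Moving $(-\Delta)^k$ onto the remaining three factors by self-adjointness (no boundary terms since $\mathcal{M}$ is closed) then gives
\begin{align*}
I = z_{n_0}^{-2k}\int_{\mathcal{M}} P_{n_0} w_0 \cdot (-\Delta)^k \bigl(P_{n_1}w_1 \cdot P_{n_2}w_2 \cdot P_{n_3}w_3\bigr)\,dx.
\end{align*}

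Next, I would distribute $(-\Delta)^k$ across the triple product by the Leibniz rule, producing at most $A^k$ terms (for a combinatorial constant $A$ depending only on the dimension) of the form $\nabla^{a_1}P_{n_1}w_1 \cdot \nabla^{a_2}P_{n_2}w_2 \cdot \nabla^{a_3}P_{n_3}w_3$ with $a_1+a_2+a_3=2k$. Estimating each such term by Cauchy--Schwarz against $P_{n_0}w_0$ followed by H\"older ($L^\infty \cdot L^\infty \cdot L^2$), together with the standard Bernstein bounds for spectrally localized functions
\begin{align*}
\norm{\nabla^a P_{n_j}w_j}_{L^\infty(\mathcal{M})} \lesssim z_{n_j}^{a+\sigma}\norm{w_j}_{L^2}, \qquad \norm{\nabla^a P_{n_j}w_j}_{L^2(\mathcal{M})}\lesssim z_{n_j}^{a}\norm{w_j}_{L^2},
\end{align*}
where $\sigma$ is the ambient $L^\infty$ eigenfunction loss ($\sigma=1/2$ on $\mathbb{S}^2$), and invoking the hypothesis $z_{n_j}\leq z_{n_0}/C$ to collapse $a_1+a_2+a_3=2k$ into a single exponent, yields
\begin{align*}
\abs{I} \lesssim z_{n_0}^{-2k}\,A^k\,(z_{n_0}/C)^{2k+2\sigma}\prod_{j=0}^{3}\norm{w_j}_{L^2} = (A/C^2)^k\,C^{-2\sigma}\,z_{n_0}^{2\sigma}\prod_{j=0}^{3}\norm{w_j}_{L^2}.
\end{align*}

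Fixing $C$ large enough that $A/C^2 < 1$ — which is exactly how the separation constant of the lemma gets pinned down — produces geometric decay in $k$; for any $p>0$, taking $k$ proportional to $(p+2\sigma)\log z_{n_0}$ then turns $(A/C^2)^k$ into $\lesssim z_{n_0}^{-p-2\sigma}$, and the desired bound $\abs{I} \leq C_p z_{n_0}^{-p}\prod_j\norm{w_j}_{L^2}$ follows. The main obstacle is precisely this tradeoff: each integration by parts gains a factor of $z_{n_0}^{-2}$ in the prefactor but releases a derivative that Leibniz and Bernstein convert into at most $(z_{n_0}/C)^2$, so $C$ must be chosen sufficiently large relative to $A$ and $\sigma$ that the net gain per step is strictly better than $1$. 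On $\mathbb{S}^2$ one could alternatively shortcut the entire argument via the product decomposition of spherical harmonics — the integral vanishes identically once $z_{n_0}$ exceeds $z_{n_1}+z_{n_2}+z_{n_3}$ — but the integration-by-parts proof sketched above is robust enough to survive on any closed manifold, which is presumably why the paper takes it as inspiration for its disk-adapted analogue \eqref{eq BGT2}.
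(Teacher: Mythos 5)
The paper does not prove this lemma: it is the cited Lemma~2.6 of Burq--G\'erard--Tzvetkov in \cite{bgtBil}, reproduced only as motivation for the paper's own weak-interaction estimate in Proposition~\ref{prop weak}. There is therefore no in-paper proof to compare against; what can be said is that your reconstruction is essentially the BGT argument, namely the spectral identity $P_{n_0}w_0 = z_{n_0}^{-2k}(-\Delta)^k P_{n_0}w_0$, transfer of $(-\Delta)^k$ to the remaining triple product by self-adjointness (valid because $\mathcal{M}$ is closed), iterated Leibniz expansion, Bernstein-type bounds for the spectrally localized factors, and the separation hypothesis $Cz_{n_j}\le z_{n_0}$ to make each integration-by-parts step gain strictly more than a factor of $1$.

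One point deserves to be made explicit rather than left implicit. Since you take $k$ proportional to $\log z_{n_0}$, the constants hidden in the displayed Bernstein and elliptic-regularity estimates for $\nabla^a P_{n_j} w_j$ must grow at worst geometrically in the derivative order $a$ (hence in $k$), so that they can be absorbed into the combinatorial factor $A^k$ and then dominated by $C^{2k}$ after enlarging $C$; if those constants grew super-geometrically, the resulting $C_p$ would secretly depend on $z_{n_0}$ and the conclusion would be vacuous. On a closed manifold this is indeed controlled (one can work with powers of $\sqrt{-\Delta}$ and standard elliptic bounds), but the proof should say so. Your closing observation is also exactly the right diagnosis of why none of this transports to the unit disk: the self-adjointness transfer of $(-\Delta)^k$ produces boundary terms, which is precisely why the paper abandons this mechanism on $\Theta$ and instead proves Lemma~\ref{lem weak e_n} by the explicit trigonometric asymptotics of the Bessel eigenfunctions $e_n$ and a single integration by parts in $r$, at the cost of obtaining only the fixed-power decay $n_2 n_3/n_0^2$ rather than arbitrary polynomial decay.
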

Notice that on the right-hand side of \eqref{eq BGT}, the factor $z_{n_0}^{-p}$ gives a huge decay, which means that the interaction in fact is weak. Now let us present our version of such weak interaction.

\begin{prop}[Weak interaction]\label{prop weak}
For the frequency quadruple $\underline{n} = (n_0 , n_1 , n_2 , n_3) \in \N^4$ and $n_0 \gg  n_1 \geq n_2 \geq n_3$ (`$n_0 \gg  n_1$' means $n_0 \geq 2 n_1$), we consider the functions $w, f, g, h \in X^{0,b}$, $b=\frac{1}{2}+$ with their frequencies localized at $n_0, n_1, n_2 , n_3$. More precisely,
\begin{align*}
w (t,x) =  w_{n_0} (t) e_{n_0}(x), \quad f (t,x) =f_{n_1} (t) e_{n_1}(x), \quad g (t,x) =  g_{n_2} (t) e_{n_2}(x), \quad h(t,x) =  h_{n_3} (t) e_{n_3}(x).
\end{align*}
Define the interaction between these functions as follows
\begin{align}\label{eq J}
J(\underline{n}) : = \abs{ \int_{\R \times \Theta} \overline{w}  \cdot f  \cdot \overline{g} \cdot h \, dx dt }.
\end{align}
Then this interaction $J$ satisfies
\begin{align}\label{eq J bdd}
J(\underline{n})  \lesssim \frac{n_2^{\frac{3}{2}} n_3^{\frac{1}{2}}}{n_0^2} \frac{\norm{w}_{X^{0, b}} \norm{f}_{X^{0, b}} \norm{g}_{X^{0, b}}  \norm{h}_{X^{0, b}} }{ \inner{   n_0^{2\alpha} - n_1^{2\alpha} + n_2^{2\alpha} - n_3^{2\alpha}}^{b}}   . 
\end{align}
\end{prop}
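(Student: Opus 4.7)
Since each of $w, f, g, h$ is a pure product of a single time coefficient and one Dirichlet eigenfunction, the spacetime integral in \eqref{eq J} factors as
$$J(\underline{n}) = |I_{\mathrm{sp}}(\underline{n})| \cdot |I_{\mathrm{t}}(\underline{n})|, \qquad I_{\mathrm{sp}} = \int_{\Theta} e_{n_0} e_{n_1} e_{n_2} e_{n_3} \, dx, \quad I_{\mathrm{t}} = \int_{\R} \overline{w_{n_0}(t)} f_{n_1}(t) \overline{g_{n_2}(t)} h_{n_3}(t) \, dt.$$
The plan is to bound each factor separately: $|I_{\mathrm{sp}}| \lesssim n_2 n_3 / n_0^2$ and $|I_{\mathrm{t}}| \lesssim \langle \Phi \rangle^{-b} \prod_j \|\cdot\|_{X^{0,b}}$ with $\Phi := n_0^{2\alpha} - n_1^{2\alpha} + n_2^{2\alpha} - n_3^{2\alpha}$, then multiply.

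\textbf{Spatial factor.} The spatial bound is the heart of the matter and rests on repeated applications of Green's theorem. Using $e_{n_0} = -\Delta e_{n_0} / z_{n_0}^2$ and the fact that both $e_{n_0}$ and $e_{n_1} e_{n_2} e_{n_3}$ vanish on $\partial\Theta$, I move $-\Delta / z_{n_0}^2$ onto the product with no boundary contribution. Expanding $\Delta(e_{n_1} e_{n_2} e_{n_3})$ by Leibniz yields three ``diagonal'' terms $-z_{n_j}^2 e_{n_1} e_{n_2} e_{n_3}$ that reproduce $I_{\mathrm{sp}}$ with prefactor $(z_{n_1}^2 + z_{n_2}^2 + z_{n_3}^2)/z_{n_0}^2 \leq 3/4$ (using $n_0 \geq 2 n_1$), which I absorb into the left-hand side, leaving
$$|I_{\mathrm{sp}}| \lesssim \frac{1}{z_{n_0}^2} \sum_{\{i,j,k\} = \{1,2,3\}} \left| \int_{\Theta} e_{n_0} e_{n_k} \nabla e_{n_i} \cdot \nabla e_{n_j} \, dx \right|.$$
The ``good'' cross-gradient term ($\nabla e_{n_2} \cdot \nabla e_{n_3}$) is of size $z_{n_2} z_{n_3}$ after bounding $\|e_{n_0}\|_{L^2} \|e_{n_1}\|_{L^2} \lesssim 1$. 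The ``bad'' terms involving $\nabla e_{n_1}$ are handled by a second integration by parts using $\nabla e_{n_1} \cdot \nabla e_{n_j} = \tfrac{1}{2} \Delta(e_{n_1} e_{n_j}) + \tfrac{1}{2}(z_{n_1}^2 + z_{n_j}^2) e_{n_1} e_{n_j}$ and moving $\Delta$ back onto $e_{n_0} e_{n_k}$ via Green's theorem; the algebra reduces them to further copies of $I_{\mathrm{sp}}$ (absorbed again via the gap $n_0 \geq 2 n_1$) and to the ``good'' term, so only the $n_2 n_3$ scale survives. The asymptotic $e_n(r) \sim \cos((n - \tfrac12)\pi r - \tfrac{\pi}{4}) / \sqrt{r}$ from \eqref{eq e_n approx} together with the $L^p$ bounds \eqref{eq e_n bdd} are used to control the residual norms.

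\textbf{Temporal factor.} For the time integral, write $w_{n_0}(t) = e^{-i t z_{n_0}^{2\alpha}} A_0(t)$ and analogously $f, g, h$ with $A_1, A_2, A_3$, so that $\|A_j\|_{H^b_t} = \|\cdot\|_{X^{0,b}}$ by Definition \ref{defn Xsb}. Then
$$I_{\mathrm{t}} = \int_{\R} e^{i t \Phi} \, \overline{A_0(t)} A_1(t) \overline{A_2(t)} A_3(t) \, dt.$$
Parseval in $t$ recasts this as an integral over $(\sigma_0, \sigma_1, \sigma_2, \sigma_3) \in \R^4$ constrained to $\sigma_0 - \sigma_1 + \sigma_2 - \sigma_3 = -\Phi$, with integrand $\prod_j \langle \sigma_j \rangle^{-b} U_j(\sigma_j)$ where $U_j := |\widehat{A_j}| \langle \cdot \rangle^b$ obeys $\|U_j\|_{L^2} = \|A_j\|_{H^b}$. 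The linear constraint forces at least one $|\sigma_{j_*}| \gtrsim |\Phi|$; splitting into the four symmetric cases and peeling a factor of $\langle \Phi \rangle^{-b}$ from the corresponding weight, the remaining weights with $b > \tfrac12$ render the convolution summable by Young's / Cauchy--Schwarz inequalities.

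\textbf{Main obstacle.} The principal difficulty is the spatial ``bad'' cross-gradient terms, whose naive estimate gives $z_{n_1} z_{n_j}$ rather than the required $z_{n_2} z_{n_3}$; controlling them by exploiting the frequency gap $n_0 \geq 2 n_1$ through the iterated Green's theorem is precisely what supplies the compensating $n_2 n_3 / n_0^2$ decay that plays the role of the convolution constraint absent on the disk.
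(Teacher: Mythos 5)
Your factorization of $J(\underline{n})$ into a spatial integral $I_{\mathrm{sp}}$ and a temporal integral $I_{\mathrm{t}}$ is correct and is exactly the paper's first step. Your treatment of the temporal factor (Parseval, weights $\langle\sigma_j\rangle^{-b}$ with $b>1/2$, pigeonhole on the constraint surface) is in the same spirit as the paper's Lemma \ref{lem weak fn}, which instead computes the weight integral $A_2$ explicitly via the calculus lemma from \cite{DET}; both routes are serviceable.

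The spatial estimate, however, has a genuine gap. The repeated Green's-theorem manipulation you propose does not produce the claimed $n_2 n_3/n_0^2$ decay; in fact it closes to a trivial identity. To see this, write $P_{ij}=\int_\Theta e_{n_0}e_{n_k}\nabla e_{n_i}\cdot\nabla e_{n_j}\,dx$ (with $\{i,j,k\}=\{1,2,3\}$) and $Q_{0k}=\int_\Theta e_{n_i}e_{n_j}\nabla e_{n_0}\cdot\nabla e_{n_k}\,dx$. Moving $-\Delta/z_{n_0}^2$ off $e_{n_0}$ gives, as you say,
$(z_{n_0}^2-z_{n_1}^2-z_{n_2}^2-z_{n_3}^2)I_{\mathrm{sp}}=-2(P_{12}+P_{13}+P_{23})$.
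Applying your identity $\nabla e_{n_i}\cdot\nabla e_{n_j}=\tfrac12\Delta(e_{n_i}e_{n_j})+\tfrac12(z_{n_i}^2+z_{n_j}^2)e_{n_i}e_{n_j}$ to each $P_{ij}$ and then Green's theorem yields $P_{ij}=\tfrac12(z_{n_i}^2+z_{n_j}^2-z_{n_0}^2-z_{n_k}^2)I_{\mathrm{sp}}+Q_{0k}$. Summing and substituting back, all the $z^2$-coefficients of $I_{\mathrm{sp}}$ cancel and one is left with $z_{n_0}^2 I_{\mathrm{sp}}=Q_{01}+Q_{02}+Q_{03}$ --- but this is simply the one-step identity $z_{n_0}^2 I_{\mathrm{sp}}=\int_\Theta\nabla e_{n_0}\cdot\nabla(e_{n_1}e_{n_2}e_{n_3})\,dx$, which yields at best $|I_{\mathrm{sp}}|\lesssim z_{n_1}/z_{n_0}$. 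Crucially, the ``bad'' terms do \emph{not} reduce to copies of $I_{\mathrm{sp}}$ plus the ``good'' term: they reduce to $Q_{02}$ and $Q_{03}$, which carry $\nabla e_{n_0}$ and are of size $z_{n_0}z_{n_2}$ and $z_{n_0}z_{n_3}$, i.e., after dividing by $z_{n_0}^2$ one only gets $n_2/n_0$, which is much larger than the target $n_2n_3/n_0^2$.

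The decay the paper obtains is not a consequence of spectral manipulations with the Laplacian; it is a genuine one-dimensional oscillatory-integral estimate that uses the explicit trigonometric asymptotics \eqref{eq e_n approx} of the Dirichlet eigenfunctions. The paper writes $I_{\mathrm{sp}}=\int_0^1 (r\, e_{n_0}e_{n_1})\,\phi(r)\,dr$ with $\phi=e_{n_2}e_{n_3}$, integrates by parts once to produce $\int_0^1 F(r)\phi'(r)\,dr$ with $F(r)=\int_0^r \gamma e_{n_0}e_{n_1}\,d\gamma$, and then exploits that the product $e_{n_0}e_{n_1}$ oscillates at frequencies $(n_0\pm n_1)\pi$ --- both comparable to $n_0-n_1\gtrsim n_0$ under the separation $n_0\ge 2n_1$ --- so its antiderivative $F$ is small, while $\|\phi'\|_{L^\infty}\lesssim n_2n_3$. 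That cosine product-to-sum structure is invisible to Green's theorem, which is why your iteration cannot close. To salvage your approach you would need to feed the trigonometric asymptotics into the spatial estimate at some stage, which effectively brings you back to the paper's argument.

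A smaller point: your Cauchy--Schwarz for the ``good'' term should pair $\|e_{n_0}e_{n_1}\|_{L^2}$ with $\|\nabla e_{n_2}\cdot\nabla e_{n_3}\|_{L^2}$ (not $\|e_{n_0}\|_{L^2}\|e_{n_1}\|_{L^2}$), and bounding $\|e_{n_0}e_{n_1}\|_{L^2}$ requires the bilinear eigenfunction estimate (Claim \ref{claim bilinear1}); this is a fixable imprecision, but it is the Green's theorem step above that is the real obstruction.
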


\begin{proof}[Proof of Proposition \ref{prop weak}]
Notice that \eqref{eq J} can be written as 
\begin{align*}
J (\underline{n}) & = \int_{\R \times \Theta} \overline{w_{n_0} (t)} e_{n_0} (x) \cdot  f_{n_1}(t) e_{n_1}(x)  \cdot  \overline{g_{n_2}(t) } e_{n_2} (x)   \cdot  h_{n_3}(t) e_{n_3} (x) \, dxdt    \\
& = \parenthese{ \int_{\R} \overline{w_{n_0} (t)} \cdot  f_{n_1}(t) \cdot \overline{g_{n_2}(t)} \cdot  h_{n_3}(t) \,  dt}  \parenthese{\int_{\Theta}   e_{n_0} (x) e_{n_1}(x) e_{n_2} (x) e_{n_3}(x) \,  dx} = : A  \times B . 
\end{align*}

We will estimate the contributions of the two terms  in \eqref{eq J} in Lemma \ref{lem weak e_n} and Lemma \ref{lem weak fn} separately.
\begin{lem}[Weak interaction among separated eigenfunctions]\label{lem weak e_n}
If $n_0 \gg  n_1 \geq n_2 \geq n_3 $, then
\begin{align*}
B : = \int_{\Theta} e_{n_0} e_{n_1} e_{n_2} e_{n_3} \, dx < \mathcal{O} \parenthese{\frac{n_2^{\frac{3}{2}} n_3^{\frac{1}{2}}}{n_0^2}}  .
\end{align*}
\end{lem}

\begin{lem}[Interaction between frequency localized functions]\label{lem weak fn}
Under the same assumption as in Proposition \ref{prop weak}, the interaction between their coefficients at frequencies $n_0, n_1, n_2 , n_3$ satisfies
\begin{align*}
A  : =  \abs{\int_{\R} \overline{w_{n_0} (t)}   f_{n_1}(t) \overline{g_{n_2}(t)} h_{n_3}(t) \,  dt} \lesssim  \frac{\norm{w}_{X^{0, b}} \norm{f}_{X^{0, b}} \norm{ g}_{X^{0, b}}  \norm{h}_{X^{0, b}} }{ \inner{   n_0^{2\alpha} - n_1^{2\alpha} + n_2^{2\alpha} - n_3^{2\alpha}}^{b}}  .
\end{align*}
\end{lem}

Assuming the two lemmas above, it is easy to see the weak interaction as in \eqref{eq J bdd}.
\end{proof}

Now we are left to present the proofs of Lemma \ref{lem weak e_n} and Lemma \ref{lem weak fn}.
\begin{proof}[Proof of Lemma \ref{lem weak e_n}]
Before proving it, let us first make an observation. In fact, a naive estimate from \eqref{eq e_n bdd} gives that
\begin{align*}
\abs{\int_{\Theta} e_{n_0} e_{n_1} e_{n_2} e_{n_3} \, dx}  \lesssim \prod_{i=0}^3 \norm{e_{n_i}}_{L^4} \lesssim n_0^+ n_1^+ n_2^+ n_3^+ .
\end{align*}
However, we should expect  a much weaker interaction when the frequencies are separate ($n_0 \gg  n_1 \geq n_2 \geq n_3 $).

To prove this lemma, we first write 
\begin{align}\label{eq phiF}
\begin{aligned}
\phi (r) & = e_{n_2} (r) \, e_{n_3} (r) ,\\
F(r) & = \int_0^r \gamma \, e_{n_0}(\gamma) \, e_{n_1}(\gamma) \, d \gamma ,
\end{aligned}
\end{align}
then performing an integration by parts, we see that
\begin{align*}
\int_{\Theta} e_{n_0} e_{n_1} e_{n_2} e_{n_3} \, dx & \sim  \int_0^1 (r e_{n_0}(r) e_{n_1}(r)) \phi(r) \, dr = F(r) \phi(r) \Big|_0^1 - \int_0^1 F(r) \phi'(r) \, dr .
\end{align*}

To see the weak interaction, we claim that
\begin{claim}\label{claim e_n}
\begin{enumerate}
\item The boundary terms are zeros
\begin{align*}
F(1) \phi(1) = F(0) \phi(0) =0 ;
\end{align*}

\item Approximate formula for $F$
\begin{align*}
F(r) &  \sim  \frac{\sqrt{z_{n_0} z_{n_1}} }{z_{n_0}^2 -z_{n_1}^2} \,  r \parenthese{z_{n_0} J_1 (z_{n_0} r) J_0 (z_{n_1} r) - z_{n_1} J_0 (z_{n_0} r) J_1 (z_{n_1} r)};
\end{align*}

\item Approximate formula for $\phi'$
\begin{align*}
\phi'(r) & \sim \sqrt{z_{n_2} z_{n_3}}\parenthese{ z_{n_2}  J_1 (z_{n_2} r)  J_0 (z_{n_3} r) + z_{n_3} J_0 (z_{n_2} r)  J_1 (z_{n_3} r) } ;
\end{align*}

\item With the assumption $n_0 \gg  n_1 \geq n_2 \geq n_3 $ (recall that `$n_0 \gg  n_1$' means $n_0 \geq 2 n_1$), we obtain the weak interaction using the approximate formula for $F$ and $\phi'$, that is,
\begin{align*}
\abs{\int_{\Theta} e_{n_0} e_{n_1} e_{n_2} e_{n_3} \, dx } & = \abs{ \int_0^1 F(r) \phi'(r) \, dr } < \mathcal{O} \parenthese{\frac{z_{n_2}^{\frac{3}{2}} z_{n_3}^{\frac{1}{2}}}{z_{n_0}^2}} \sim \mathcal{O} \parenthese{\frac{n_2^{\frac{3}{2}} n_3^{\frac{1}{2}}}{n_0^2}}  .
\end{align*}
\end{enumerate}
\end{claim}

\begin{proof}[Proof of Claim \ref{claim e_n}]
For {\it (1)}, the zero boundary conditions can be justified since  $F(r)=0$ at $r=0$ and $\phi(r) =0$ at $r=1$ (recall $e_n (1) =0$).

For {\it (2)}, recall $F(r)  = \int_0^r \gamma \, e_{n_0}(\gamma) \, e_{n_1}(\gamma) \, d \gamma$,  and $e_n (r)  = \norm{J_0 (z_n \cdot)}_{L^2(\Theta)}^{-1} J_0 (z_n r)  $, then we write 
\begin{align*}
F(r) & = \int_0^r \gamma \, e_{n_0}(\gamma) \, e_{n_1}(\gamma) \, d \gamma  =  \norm{J_0 (z_{n_0} \cdot)}_{L^2(\Theta)}^{-1} \norm{J_0 (z_{n_1} \cdot)}_{L^2(\Theta)}^{-1} \int_0^r \gamma \, J_0 (z_{n_0} \gamma)\, J_0 (z_{n_1} \gamma) \, d \gamma .
\end{align*}
Combining  the following anti-derivative (for detailed calculation, see Lemma \ref{lem intF} in Appendix \ref{sec Appendix})
\begin{align*}
\int_0^r \gamma \, J_0 (z_{n_0} \gamma)\, J_0 (z_{n_1} \gamma) \, d \gamma =  \frac{1}{z_{n_0}^2 -z_{n_1}^2} \, r [z_{n_0} J_1 (z_{n_0} r) J_0 (z_{n_1} r) - z_{n_1} J_0 (z_{n_0} r) J_1 (z_{n_1} r)]
\end{align*}
we have
\begin{align*}
F(r) &  =  \frac{\norm{J_0 (z_{n_0} \cdot)}_{L^2(\Theta)}^{-1} \norm{J_0 (z_{n_1} \cdot)}_{L^2(\Theta)}^{-1}}{z_{n_0}^2 -z_{n_1}^2} \, r [z_{n_0} J_1 (z_{n_0} r) J_0 (z_{n_1} r) - z_{n_1} J_0 (z_{n_0} r) J_1 (z_{n_1} r)]\\
&  \sim  \frac{\sqrt{z_{n_0} z_{n_1}} }{z_{n_0}^2 -z_{n_1}^2} \,  r \parenthese{z_{n_0} J_1 (z_{n_0} r) J_0 (z_{n_1} r) - z_{n_1} J_0 (z_{n_0} r) J_1 (z_{n_1} r)},
\end{align*}
where in the last step, we used $\norm{J_0 (z_n \cdot)}_{L^2(\Theta)}^{-1}  \sim z_{n}^{\frac{1}{2}}$ in \eqref{eq J_0 norm}.

For {\it (3)}, by \eqref{eq e_n} and \eqref{eq dJ0}, we have
\begin{align*}
\phi'(r) &  = (e_{n_2} (r) \, e_{n_3} (r) )' =  \norm{J_0 (z_{n_2} \cdot)}_{L^2(\Theta)}^{-1} \norm{J_0 (z_{n_3} \cdot)}_{L^2(\Theta)}^{-1} (J_0 (z_{n_2} r)  J_0 (z_{n_3} r))' \\
& = \norm{J_0 (z_{n_2} \cdot)}_{L^2(\Theta)}^{-1} \norm{J_0 (z_{n_3} \cdot)}_{L^2(\Theta)}^{-1} (z_{n_2}  J_1 (z_{n_2} r)  J_0 (z_{n_3} r) + z_{n_3} J_0 (z_{n_2} r)  J_1 (z_{n_3} r) )\\
& \sim \sqrt{z_{n_2} z_{n_3}}( z_{n_2}  J_1 (z_{n_2} r)  J_0 (z_{n_3} r) + z_{n_3} J_0 (z_{n_2} r)  J_1 (z_{n_3} r) )
\end{align*}
where in the approximation above, we used \eqref{eq J_0 norm} again. Since $J_0(x) $, $J_1 (x)$ are bounded, we see that
\begin{align*}
\abs{\phi'(r) } \lesssim  \sqrt{z_{n_2} z_{n_3}} (z_{n_2} + z_{n_3}) .
\end{align*}

Now let us move on to {\it (4)}.

Intuitively, we can think of  $\gamma e_{n_0}(\gamma) e_{n_1} (\gamma)$ as a produce of two trigonometric functions, which is essentially $\cos ((z_{n_0} \pm z_{n_1})r)$ using the product to sum identities. The integral of $F$ is basically the twice integration of $\cos ((z_{n_0} \pm z_{n_1})r)$, which will bring out a factor of $\frac{1}{(z_{n_0} - z_{n_1})^2}$ by changing of variables. Putting together  the bound of $\phi'$ that we observed in {\it (2)}, which is  $z_{n_2}^{\frac{3}{2}} z_{n_3}^{\frac{1}{2}}$, we should be able to see the estimate in {\it (4)}. However, we need to justify this bound in the rest of this proof.

Recall \eqref{eq J0} and \eqref{eq Jinfty}. We will use the first formula for $\abs{x} < 1$ and the second one for $\abs{x} \geq 1$, where $n = 0,1$. 
\begin{align*}
J_n(x) & = \frac{1}{n! 2^n}  x^n + \mathcal{O} (x^{n+2}) , \\
J_n(x) & = \sqrt{\frac{2}{\pi}} \frac{\cos(x- \frac{n \pi}{2} - \frac{ \pi}{4})}{\sqrt{x}} + \mathcal{O} (x^{-\frac{3}{2}}) 
\end{align*}

Combining {\it (2)} and {\it (3)}, we write
\begin{align*}
\int_0^1 F(r) \phi'(r) \, dr & \sim \int_0^1 \frac{\sqrt{z_{n_0} z_{n_1}} }{z_{n_0}^2 -z_{n_1}^2} \,  r \square{z_{n_0} J_1 (z_{n_0} r) J_0 (z_{n_1} r) - z_{n_1} J_0 (z_{n_0} r) J_1 (z_{n_1} r) } \\
& \quad \times \sqrt{z_{n_2} z_{n_3}}  \square{ z_{n_2}  J_1 (z_{n_2} r)  J_0 (z_{n_3} r) + z_{n_3} J_0 (z_{n_2} r}  J_1 (z_{n_3} r) )  \, dr
\end{align*}
To estimate $ \int_0^1 F(r) \phi'(r) \, dr $, let us consider the following term as an example:
\begin{align}\label{eq ex}
\begin{aligned}
& \quad  \int_0^1 \frac{\sqrt{z_{n_0} z_{n_1}} }{z_{n_0}^2 -z_{n_1}^2} \, r  z_{n_0} J_1 (z_{n_0} r) J_0 (z_{n_1} r) \sqrt{z_{n_2} z_{n_3}}  z_{n_2}  J_1 (z_{n_2} r)  J_0 (z_{n_3} r)  \, dr \\
& =  \frac{ \sqrt{z_{n_0} z_{n_1} z_{n_2} z_{n_3}}  z_{n_0} z_{n_2}}{z_{n_0}^2 -z_{n_1}^2}  \int_0^1 r   J_1 (z_{n_0} r) J_0 (z_{n_1} r)   J_1 (z_{n_2} r)  J_0 (z_{n_3} r)  \, dr.
\end{aligned}
\end{align}
In fact, since $n_0 \geq n_1$ and $n_2 \geq n_3$, this term will contribute the most in the estimate of $ \int_0^1 F(r) \phi'(r) \, dr $.

We will use the different approximations of $J_0(x)$ and $J_1 (x)$ for $\abs{x} <1$ and $\abs{x} \geq 1$, hence we consider the the following five cases. 
\begin{enumerate}[-]
\item
{\bf Case I}: $0 \leq r < \frac{1}{z_{n_0}}$;
\item
{\bf Case II}: $\frac{1}{z_{n_0}} \leq r < \frac{1}{z_{n_1}}$;
\item
{\bf Case III}: $\frac{1}{z_{n_1}} \leq r < \frac{1}{z_{n_2}}$;
\item
{\bf Case IV}: $\frac{1}{z_{n_2}} \leq r < \frac{1}{z_{n_3}}$;
\item
{\bf Case V}: $\frac{1}{z_{n_3}} \leq r \leq 1$.
\end{enumerate}
We will focus on only the main terms in the approximation formulas, and the control of error terms can be found in Subsection \ref{ssec Error}.

\noindent {\bf Case I:} $0 \leq r < \frac{1}{z_{n_0}}$. 

In this case, all the arguments in $J_0$ and $J_1$ in \eqref{eq ex} will be smaller than $1$, hence we will need four approximations around the origin, that is,
\begin{align*}
& \quad  \frac{ \sqrt{z_{n_0} z_{n_1} z_{n_2} z_{n_3}}  z_{n_0} z_{n_2}}{z_{n_0}^2 -z_{n_1}^2}    \int_0^{\frac{1}{z_{n_0}}} r   J_1 (z_{n_0} r) J_0 (z_{n_1} r)   J_1 (z_{n_2} r)  J_0 (z_{n_3} r)  \, dr\\
& \sim  \frac{ \sqrt{z_{n_0} z_{n_1} z_{n_2} z_{n_3}}  z_{n_0} z_{n_2}}{z_{n_0}^2 -z_{n_1}^2}   \int_0^{\frac{1}{z_{n_0}}} r   ( z_{n_0} r )  ( z_{n_2} r)  \, dr  \\
& \sim  \frac{ \sqrt{z_{n_0} z_{n_1} z_{n_2} z_{n_3}}  z_{n_0} z_{n_2}}{z_{n_0}^2 -z_{n_1}^2}   z_{n_0}z_{n_2} \, r^4 \Big|_0^{ \frac{1}{z_{n_0}}} \\
& =  \frac{ \sqrt{z_{n_0} z_{n_1} z_{n_2} z_{n_3}}  z_{n_0} z_{n_2}}{z_{n_0}^2 -z_{n_1}^2}  \frac{z_{n_0}z_{n_2} }{z_{n_0}^4}  \sim  \frac{n_1^{\frac{1}{2}} n_2^{\frac{5}{2}} n_3^{\frac{1}{2}}}{n_0^{\frac{3}{2}} (n_0^2 - n_1^2)} ,
\end{align*}
where in the last step, we used \eqref{eq z_n}.

\noindent {\bf Case II:} $\frac{1}{z_{n_0}} \leq r < \frac{1}{z_{n_1}}$.

In this case, the argument  $z_{n_0} r$ turns to be larger than $1$ in \eqref{eq ex}, hence we need its asymptotic approximation, while the other three are treated the same as in the previous case. 
\begin{align*}
& \quad \frac{ \sqrt{z_{n_0} z_{n_1} z_{n_2} z_{n_3}}  z_{n_0} z_{n_2}}{z_{n_0}^2 -z_{n_1}^2}  \int_{\frac{1}{z_{n_0}}}^{\frac{1}{z_{n_1}}} r   J_1 (z_{n_0} r) J_0 (z_{n_1} r)   J_1 (z_{n_2} r)  J_0 (z_{n_3} r)  \, dr\\
& \sim \frac{ \sqrt{z_{n_0} z_{n_1} z_{n_2} z_{n_3}}  z_{n_0} z_{n_2}}{z_{n_0}^2 -z_{n_1}^2}   \int_{\frac{1}{z_{n_0}}}^{\frac{1}{z_{n_1}}} r \frac{\sin (z_{n_0} r -\frac{\pi}{4})}{\sqrt{z_{n_0} r}} (z_{n_2} r) \, dr \\
& \sim \frac{ \sqrt{z_{n_0} z_{n_1} z_{n_2} z_{n_3}}  z_{n_0} z_{n_2}}{z_{n_0}^2 -z_{n_1}^2}   \frac{z_{n_2}}{\sqrt{z_{n_0}}} \int_{\frac{1}{z_{n_0}}}^{\frac{1}{z_{n_1}}} r^{\frac{3}{2}} \sin (z_{n_0} r -\frac{\pi}{4}) \, dr \\
& \lesssim  \frac{ \sqrt{z_{n_0} z_{n_1} z_{n_2} z_{n_3}}  z_{n_0} z_{n_2}}{z_{n_0}^2 -z_{n_1}^2}  \frac{z_{n_2}}{\sqrt{z_{n_0}}}  (\frac{z_{n_0}}{z_{n_1}})^{\frac{3}{2}}  z_{n_0}^{-\frac{5}{2}} \sim \frac{n_2^{\frac{5}{2}} n_3^{\frac{1}{2}}}{n_1 (n_0^2 -n_1^2)}. 
\end{align*}
Note that the last inequality above, we used  (see Lemma \ref{lem xpsin} in  Appendix \ref{sec Appendix} for detailed computation)
\begin{align*}
\abs{\int_a^b x^p \sin x \, dx}  \lesssim  a^p + b^p .
\end{align*}

]

\noindent {\bf Case III:} $\frac{1}{z_{n_1}} \leq r < \frac{1}{z_{n_2}}$.

Now we have two asymptotic approximations in \eqref{eq ex}
\begin{align*}
& \quad \frac{ \sqrt{z_{n_0} z_{n_1} z_{n_2} z_{n_3}}  z_{n_0} z_{n_2}}{z_{n_0}^2 -z_{n_1}^2}  \int_{\frac{1}{z_{n_1}}}^{\frac{1}{z_{n_2}}} r   J_1 (z_{n_0} r) J_0 (z_{n_1} r)   J_1 (z_{n_2} r)  J_0 (z_{n_3} r)  \, dr\\
& \sim \frac{ \sqrt{z_{n_0} z_{n_1} z_{n_2} z_{n_3}}  z_{n_0} z_{n_2}}{z_{n_0}^2 -z_{n_1}^2}   \int_{\frac{1}{z_{n_1}}}^{\frac{1}{z_{n_2}}} r \frac{\sin (z_{n_0} r -\frac{\pi}{4})}{\sqrt{z_{n_0} r}} \frac{\cos (z_{n_1} r -\frac{\pi}{4})}{\sqrt{z_{n_1} r}}  (z_{n_2} r) \, dr \\
& \sim \frac{ \sqrt{z_{n_0} z_{n_1} z_{n_2} z_{n_3}}  z_{n_0} z_{n_2}}{z_{n_0}^2 -z_{n_1}^2}   \frac{z_{n_2}}{\sqrt{z_{n_0} z_{n_1}}} \int_{\frac{1}{z_{n_1}}}^{\frac{1}{z_{n_2}}}  r  \sin (z_{n_0} r -\frac{\pi}{4}) \cos (z_{n_1} r -\frac{\pi}{4}) \, dr \\
& \lesssim  \frac{ \sqrt{z_{n_0} z_{n_1} z_{n_2} z_{n_3}}  z_{n_0} z_{n_2}}{z_{n_0}^2 -z_{n_1}^2}  \frac{z_{n_2}}{\sqrt{z_{n_0} z_{n_1}}}  \frac{1}{z_{n_2}z_{n_0}} \sim \frac{n_2^{\frac{3}{2}} n_3^{\frac{1}{2}}}{ n_0^2 -n_1^2},
\end{align*}
where in the last inequality, we used Lemma  \ref{lem xpsincos}  to obtain
\begin{align*}
 \int_{\frac{1}{z_{n_1}}}^{\frac{1}{z_{n_2}}}  r  \sin (z_{n_0} r -\frac{\pi}{4}) \cos (z_{n_1} r -\frac{\pi}{4}) \, dr \lesssim  \frac{1}{z_{n_2}z_{n_0}}.
\end{align*}

\noindent {\bf Case IV:} $\frac{1}{z_{n_2}} \leq r < \frac{1}{z_{n_3}}$.

Now we have two asymptotic approximations in $F(r)$, and one more from $\phi' (r)$. To reduce the number of trig functions in \eqref{eq ex}, we use the  trigonometric identity $\sin \alpha \cos \beta  =  \frac{1}{2} (\sin (\alpha + \beta) + \sin (\alpha -\beta))$ to re-write
\begin{align*}
F(r) & \sim  \frac{\sqrt{z_{n_0} z_{n_1}} }{z_{n_0}^2 -z_{n_1}^2}  r z_{n_0} \frac{\sin(z_{n_0} r -\frac{\pi}{4})}{\sqrt{z_{n_0} r}} \frac{\cos (z_{n_1} r - \frac{\pi}{4})}{\sqrt{z_{n_1} r}} - \frac{\sqrt{z_{n_0} z_{n_1}} }{z_{n_0}^2 -z_{n_1}^2}  r z_{n_1} \frac{\cos(z_{n_0} r - \frac{\pi}{4})}{\sqrt{z_{n_0} r}} \frac{\sin(z_{n_1} r -\frac{\pi}{4})}{\sqrt{z_{n_1} r}} \\
& = \frac{z_{n_0}}{z_{n_0}^2 -z_{n_1}^2} \sin(z_{n_0} r -\frac{\pi}{4}) \cos(z_{n_1} r -\frac{\pi}{4}) - \frac{z_{n_1}}{z_{n_0}^2 -z_{n_1}^2} \cos(z_{n_0} r -\frac{\pi}{4}) \sin(z_{n_1} r -\frac{\pi}{4}) \\
& \sim    -\frac{1}{z_{n_0} -z_{n_1}}  \cos  ((z_{n_0}+z_{n_1})r ) +  \frac{1}{z_{n_0} + z_{n_1}}  \sin ((z_{n_0}-z_{n_1})r) . 
\end{align*}
Recall 
\begin{align*}
\phi'(r) &   \sim \sqrt{z_{n_2} z_{n_3}}( z_{n_2}  J_1 (z_{n_2} r)  J_0 (z_{n_3} r) + z_{n_3} J_0 (z_{n_2} r)  J_1 (z_{n_3} r) )
\end{align*}
then we write
\begin{align}\label{eq Fphi' new}
\int_{\frac{1}{z_{n_1}}}^{\frac{1}{z_{n_2}}} F(r) \phi'(r) \, dr \sim \int_{\frac{1}{z_{n_1}}}^{\frac{1}{z_{n_2}}} \parenthese{-\frac{1}{z_{n_0} -z_{n_1}}  \cos  ((z_{n_0}+z_{n_1})r ) + \frac{1}{z_{n_0} + z_{n_1}}  \sin ((z_{n_0}-z_{n_1})r)  } \phi'(r) \, dr .
\end{align}

Take the following term as an example (since it will contribute the most in \eqref{eq Fphi' new}, hence \eqref{eq ex})
\begin{align*}
& \quad \int_{\frac{1}{z_{n_2}}}^{\frac{1}{z_{n_3}}}  \frac{1}{z_{n_0} -z_{n_1}} \cos  ((z_{n_0}+z_{n_1})r )  \sqrt{z_{n_2} z_{n_3}} z_{n_2}  J_1 (z_{n_2} r)  J_0 (z_{n_3} r)  \, dr \\
& \sim \frac{ \sqrt{z_{n_2} z_{n_3}} z_{n_2}}{z_{n_0} -z_{n_1}} \int_{\frac{1}{z_{n_2}}}^{\frac{1}{z_{n_3}}} \cos  ((z_{n_0}+z_{n_1})r ) \frac{\sin (z_{n_2} r - \frac{\pi}{4})}{\sqrt{z_{n_2} r}} \, dr \\
& \lesssim \frac{ \sqrt{z_{n_3}} z_{n_2}}{z_{n_0} -z_{n_1}}  \int_{\frac{1}{z_{n_2}}}^{\frac{1}{z_{n_3}}} r^{-\frac{1}{2}} \cos  ((z_{n_0}+z_{n_1})r ) \sin (z_{n_2} r - \frac{\pi}{4})  \, dr \\
& \lesssim \frac{ \sqrt{z_{n_3}} z_{n_2}}{z_{n_0} -z_{n_1}} \frac{z_{n_2}^{\frac{1}{2}}}{z_{n_0}+ z_{n_1}}  \sim \frac{ n_2^{\frac{3}{2}} n_3^{\frac{1}{2}}}{n_0^2 -n_1^2} ,
\end{align*}
where in the last inequality, we used Lemma  \ref{lem xpsincos}  to obtain
\begin{align*}
\int_{\frac{1}{z_{n_2}}}^{\frac{1}{z_{n_3}}} r^{-\frac{1}{2}} \cos  ((z_{n_0}+z_{n_1})r ) \sin (z_{n_2} r - \frac{\pi}{4})  \, dr \lesssim \frac{z_{n_2}^{\frac{1}{2}}}{z_{n_0} + z_{n_1}}   .
\end{align*}

\noindent {\bf Case V:} $\frac{1}{z_{n_3}} \leq r \leq 1$.

In the last case, we need asymptotic approximations for all four terms, and write
\begin{align*}
& \quad \int_{\frac{1}{z_{n_3}}}^1 \frac{1}{z_{n_0} -z_{n_1}} \cos  ((z_{n_0}+z_{n_1})r )  \sqrt{z_{n_2} z_{n_3}} z_{n_2}  J_1 (z_{n_2} r)  J_0 (z_{n_3} r)  \, dr \\
& \sim \frac{ \sqrt{z_{n_2} z_{n_3}} z_{n_2}}{z_{n_0} -z_{n_1}}  \int_{\frac{1}{z_{n_3}}}^1 \cos  ((z_{n_0}+z_{n_1})r ) \frac{\sin (z_{n_2} r - \frac{\pi}{4})}{\sqrt{z_{n_2} r}} \frac{\cos (z_{n_3} r - \frac{\pi}{4})}{\sqrt{z_{n_3} r} } \, dr \\
& \lesssim \frac{  z_{n_2}}{z_{n_0} -z_{n_1}}   \int_{\frac{1}{z_{n_3}}}^1  r^{-1} \cos  ((z_{n_0}+z_{n_1})r ) \cos ((z_{n_2} +z_{n_3})r) \, dr \\
& \sim \frac{  z_{n_2}}{z_{n_0} -z_{n_1}}   \frac{  z_{n_3}}{z_{n_0} +z_{n_1}} \sim \frac{n_2n_3}{n_0^2 -n_1^2} ,
\end{align*}
where in the last inequality, we used again Lemma  \ref{lem xpsincos}  to obtain
\begin{align*}
 \int_{\frac{1}{z_{n_3}}}^1  r^{-1} \cos  ((z_{n_0}+z_{n_1})r ) \cos ((z_{n_2} +z_{n_3})r) \, dr \lesssim \frac{z_{n_3}}{z_{n_0} + z_{n_1}} .
\end{align*}

Gathering the bounds in all these five cases, we see that {\it (4)} follows, hence the proof of Claim \ref{claim e_n} is complete.
\end{proof}
Hence we finish the proof of Lemma \ref{lem weak e_n}.
\end{proof}

Let us move on to the proof of Lemma \ref{lem weak fn}.
\begin{proof}[Proof of Lemma \ref{lem weak fn}]
Using Plancherel and convolution theorem, we write
\begin{align*}
A & := \int_{\R} \overline{w_{n_0} (t)}   f_{n_1}(t) \overline{g_{n_2}(t)} h_{n_3}(t) \,  dt\\
& = \int_{\R} \widehat{\overline{w}_{n_0} }(\tau)   \parenthese{ f_{n_1} \overline{g}_{n_2} h_{n_3} }^{\wedge} (\tau)\,  d\tau\\
& = \int_{\R} \widehat{\overline{w}_{n_0} }(\tau)  \int_{\tau_1 - \tau_2 + \tau_3 = \tau} \widehat{f_{n_1}}(\tau_1) \widehat{ \overline{g}_{n_2}}(\tau_2) \widehat{h_{n_3}}(\tau_3) \, d \tau_1 d \tau_2 d \tau_3   d\tau  .
\end{align*}

To make up $X^{0,b}$ norms out of $A$, we introduce the following new notations
\begin{align}\label{eq wfgh}
\begin{aligned}
\widetilde{w}_{n_0}(\tau , n_0) & = \widehat{\overline{w}_{n_0} }(\tau)  \inner{\tau - n_0^{2\alpha}}^{b}  & \widetilde{f}_{n_1} (\tau_1 , n_1) & = \widehat{f_{n_1}}(\tau_1)  \inner{\tau_1 - n_1^{2\alpha}}^{b} \\
\widetilde{g}_{n_2} (\tau_2 , n_2) & = \widehat{ \overline{g}_{n_2}}(\tau_2) \inner{\tau_2 - n_2^{2\alpha}}^{b} & \widetilde{h}_{n_3} (\tau_3 , n_3) & = \widehat{h_{n_3}}(\tau_3)  \inner{\tau_3 - n_3^{2\alpha}}^{b} .
\end{aligned}
\end{align}
Note that the $L_{n_i}^2 L_{\tau}^2$ norms of the functions above are in fact the $X^{0,b}$ norms of $w, f, g,h$.

Then using the new notations and H\"older inequality, we obtain
\begin{align*}
A & = \int_{\R} \int_{\tau_1 - \tau_2 + \tau_3 = \tau} \frac{\widetilde{w}_{n_0} (\tau , n_0)  }{\inner{\tau - n_0^{2\alpha}}^{b} }  \,  \frac{\widetilde{f}_{n_1} (\tau_1 , n_1) }{\inner{\tau_1 - n_1^{2\alpha}}^{b}} \,  \frac{\widetilde{g}_{n_2}  (\tau_2 , n_2)}{\inner{\tau_2 - n_2^{2\alpha}}^{b}}  \,  \frac{\widetilde{h}_{n_3}  (\tau_3 , n_3)}{ \inner{\tau_3 - n_3^{2\alpha}}^{b}} \, d \tau_1 d \tau_2 d \tau_3   d\tau \\
& \lesssim \norm{\widetilde{w}_{n_0} (\tau , n_0) \widetilde{f}_{n_1} (\tau_1 , n_1) \widetilde{g}_{n_2}  (\tau_2 , n_2) \widetilde{h}_{n_3}  (\tau_3 , n_3)}_{L_{\tau}^1 L_{\tau_1, \tau_2,\tau_3 ({\tau_1 - \tau_2 + \tau_3 = \tau})}^2}  \\
& \quad \times \norm{\frac{1}{\inner{\tau - n_0^2}^{b} \inner{\tau_1 - n_1^2}^{b} \inner{\tau_2 - n_2^2}^{b} \inner{\tau_3 - n_3^2}^{b} }}_{L_{\tau}^{\infty} L_{\tau_1, \tau_2,\tau_3 ({\tau_1 - \tau_2 + \tau_3 = \tau}) }^2} \\
& =:A_1 \times A_2 .
\end{align*}

For $A_1$ term, by Cauchy-Schwarz inequality, Young's convolution inequality and Definition \ref{defn Xsb}, we get
\begin{align*}
A_1 & = \int_{\R}  \widetilde{w}_{n_0} (\tau , n_0) \parenthese{ \int_{\tau_1 - \tau_2 + \tau_3 = \tau} \parenthese{ \widetilde{f}_{n_1} (\tau_1 , n_1) \widetilde{g}_{n_2}  (\tau_2 , n_2) \widetilde{h}_{n_3}  (\tau_3 , n_3)}^2 \, d \tau_1 d \tau_2 d \tau_3 }^{\frac{1}{2}}  d\tau\\
& \lesssim \norm{\widetilde{w}_{n_0} }_{L_{\tau}^2} \norm{\parenthese{\widetilde{f}_{n_1}^2 *  \widetilde{g}_{n_2}^2 * \widetilde{h}_{n_3}^2 }^{\frac{1}{2}}}_{L_{\tau }^2} \\
& = \norm{\widetilde{w}_{n_0} }_{L_{\tau}^2} \norm{\widetilde{f}_{n_1}^2 *  \widetilde{g}_{n_2}^2 * \widetilde{h}_{n_3}^2 }_{L_{\tau }^1}^{\frac{1}{2}} \\
& \lesssim \norm{\widetilde{w}_{n_0} }_{L_{\tau}^2} \norm{\widetilde{f}_{n_1}}_{L_{\tau }^2} \norm{ \widetilde{g}_{n_2}}_{L_{\tau }^2}  \norm{ \widetilde{h}_{n_3}}_{L_{\tau }^2} \\
& = \norm{w}_{X^{0, b}} \norm{f}_{X^{0, b}} \norm{ g}_{X^{0, b}}  \norm{h}_{X^{0, b}} .
\end{align*}

For the $A_2$ term, a quick observation gives  the integrability of the integrand 
\begin{align*}
A_2 \lesssim 1 ,
\end{align*}
since $b = \frac{1}{2}+$. 

To be more precise, we estimate of the term $A_2$ using the following lemma from \cite{DET}. See also the related treatment for similar integrals in \cite{KPV}.
\begin{lem}[Lemma 1 in \cite{DET}]
If $\gamma \geq 1$, then
\begin{align*}
\int_{\R} \frac{1}{\inner{\tau -k_1}^{\gamma} \inner{\tau- k_2}^{\gamma}} \, d \tau \lesssim \inner{k_1 - k_2}^{-\gamma} .
\end{align*}
\end{lem}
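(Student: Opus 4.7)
The plan is a straightforward splitting argument based on separating the two singular points. First I would dispose of the short-range case $|k_1 - k_2| \leq 2$, where $\inner{k_1 - k_2}^{-\gamma} \sim 1$ and the desired bound reduces to proving that the integral is bounded by an absolute constant. This follows from Cauchy--Schwarz,
\begin{align*}
\int_{\R} \frac{d\tau}{\inner{\tau-k_1}^{\gamma}\inner{\tau-k_2}^{\gamma}} \leq \parenthese{\int_{\R} \frac{d\tau}{\inner{\tau-k_1}^{2\gamma}}}^{\frac{1}{2}} \parenthese{\int_{\R} \frac{d\tau}{\inner{\tau-k_2}^{2\gamma}}}^{\frac{1}{2}},
\end{align*}
which is finite since $2\gamma > 1$.

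For the main regime $|k_1 - k_2| \geq 2$, the idea is that the two singular humps around $\tau = k_1$ and $\tau = k_2$ are well separated, so at each point at least one factor is already comparable to $\inner{k_1 - k_2}$. Assume without loss of generality that $k_1 < k_2$ and set $m := (k_1+k_2)/2$. I would split
\begin{align*}
\int_{\R} \frac{d\tau}{\inner{\tau-k_1}^{\gamma}\inner{\tau-k_2}^{\gamma}} = \int_{-\infty}^m \frac{d\tau}{\inner{\tau-k_1}^{\gamma}\inner{\tau-k_2}^{\gamma}} + \int_{m}^{\infty} \frac{d\tau}{\inner{\tau-k_1}^{\gamma}\inner{\tau-k_2}^{\gamma}}.
\end{align*}
On $(-\infty, m]$, we have $k_2 - \tau \geq (k_2-k_1)/2$, hence $\inner{\tau - k_2} \gtrsim \inner{k_1 - k_2}$; pulling this factor out leaves $\int_{-\infty}^m \inner{\tau-k_1}^{-\gamma}\, d\tau$, which is bounded by a universal constant for $\gamma > 1$. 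The integral on $[m, \infty)$ is handled symmetrically by pulling out $\inner{\tau-k_1}^{-\gamma} \gtrsim \inner{k_1-k_2}^{-\gamma}$. Summing the two pieces yields the claimed bound $\inner{k_1-k_2}^{-\gamma}$.

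The only delicate point is the endpoint $\gamma = 1$: the leftover one-dimensional integral $\int_{-\infty}^m \inner{\tau-k_1}^{-1}\, d\tau$ diverges logarithmically, producing a spurious $\log\inner{k_1-k_2}$ factor. This is not an obstacle for the applications in the paper, since every invocation of the lemma uses $\gamma = 2b$ with $b = \frac{1}{2}+$, so $\gamma > 1$ strictly. If needed, one can absorb the logarithm at $\gamma = 1$ by an $\varepsilon$-loss in the exponent, replacing the bound by $\inner{k_1-k_2}^{-1+\varepsilon}$. I expect the splitting step to be entirely routine; the only "issue" is remembering to handle the short-range regime separately so that the argument does not require $\inner{k_1-k_2}$ to be large.
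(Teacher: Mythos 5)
The paper does not prove this lemma itself; it is quoted directly from the reference [DET], so there is no in-paper argument to compare against. Your argument is the standard one --- separate the two singular bumps at the midpoint and pull out the larger denominator --- and it is correct in the regime $\gamma > 1$, which is the only regime the paper actually uses (each invocation takes $\gamma = 2b$ with $b = \tfrac{1}{2}+$). The short-range reduction by Cauchy--Schwarz is fine, and the long-range midpoint split using $\inner{\tau - k_2} \gtrsim \inner{k_1 - k_2}$ on $(-\infty, m]$ and symmetrically on $[m, \infty)$ gives exactly the claimed bound once the leftover integral $\int \inner{\tau - k_1}^{-\gamma}\,d\tau$ converges, i.e.\ once $\gamma > 1$.

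One correction to your remark about the endpoint $\gamma = 1$. The leftover integral $\int_{-\infty}^{m} \inner{\tau - k_1}^{-1}\,d\tau$ does not ``diverge logarithmically, producing a spurious $\log\inner{k_1 - k_2}$ factor''; it diverges outright, since $\inner{\tau - k_1}^{-1} \sim |\tau|^{-1}$ as $\tau \to -\infty$ and the domain is unbounded below. You are right that the statement with $\gamma \geq 1$ is imprecise at the endpoint --- the sharp bound at $\gamma = 1$ does carry a $\log\inner{k_1 - k_2}$ factor, as one sees from $k_1 = 0$, $k_2 = K$ and the contribution from $|\tau| \leq K/2$ --- but to extract that logarithm by a splitting argument one needs a finer decomposition than the midpoint split, for instance cutting at $k_1 - |k_1 - k_2|$ and $k_2 + |k_1 - k_2|$: on the two unbounded outer pieces both denominators are comparable, so the integrand is $\sim |\tau|^{-2\gamma}$ at infinity and integrable for all $\gamma \geq 1$, while the bounded middle piece of length $\sim |k_1 - k_2|$ is what produces the $\log$ at $\gamma = 1$. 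Since the paper only ever uses $\gamma > 1$ strictly, none of this affects the application, and your proof as written is sound in that range.
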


Continuing the computation using the summing lemma above, we arrive at
\begin{align*}
A_2 & =  \sup_{\tau} \parenthese{ \int_{\tau_1 - \tau_2 + \tau_3 = \tau} \frac{1}{\inner{\tau - n_0^{2\alpha}}^{2b} \inner{\tau_1 - n_1^{2\alpha}}^{2b} \inner{\tau_2 - n_2^{2\alpha}}^{2b} \inner{\tau_3 - n_3^{2\alpha}}^{2b} } \, d \tau_1 d \tau_2 d \tau_3  }^{\frac{1}{2}} \\
& \lesssim   \parenthese{ \int \frac{1}{\inner{ \tau_2 - \tau_3 + n_0^{2\alpha} - n_1^{2\alpha}}^{2b}  \inner{\tau_2 - n_2^{2\alpha}}^{2b} \inner{\tau_3 - n_3^{2\alpha}}^{2b} } \,  d \tau_2 d \tau_3  }^{\frac{1}{2}} \\
& \lesssim   \parenthese{ \int \frac{1}{\inner{  \tau_3 - n_0^{2\alpha} + n_1^{2\alpha} - n_2^{2\alpha}}^{2b}  \inner{\tau_3 - n_3^{2\alpha}}^{2b} } \,  d \tau_3  }^{\frac{1}{2}} \\
& \lesssim    \frac{1}{\inner{   n_0^{2\alpha} - n_1^{2\alpha} + n_2^{2\alpha} - n_3^{2\alpha}}^{b}}   .
\end{align*}
Now putting the calculation on $A_1$ and $A_2$ together, we finish the proof of Lemma \ref{lem weak fn}.
\end{proof}

\begin{rmk}[Variations on assumptions in Proposition \ref{prop weak}]\label{rmk weak}
\begin{enumerate}
\item
In fact, if assuming $n_0 \geq n_1 \geq n_2 \geq n_3$ and $n_0 \geq 2 n_3$ in Proposition \ref{prop weak}, we should expect a very similar bound with slight modification in the proof of Claim \ref{claim e_n}
\begin{align}\label{eq J bdd'}
J(\underline{n}) = \int_{\R \times \Theta} \overline{w}  \cdot f  \cdot \overline{g} \cdot h \, dx dt  \lesssim \frac{n_1^{\frac{3}{2}} n_2^{\frac{1}{2}}}{n_0^2} \frac{\norm{w}_{X^{0, b}} \norm{f}_{X^{0, b}} \norm{g}_{X^{0, b}}  \norm{h}_{X^{0, b}} }{ \inner{   n_0^{2\alpha} - n_1^{2\alpha} + n_2^{2\alpha} - n_3^{2\alpha}}^{b}}   . 
\end{align}

\item
Instead of $X^{0,b}$, we can take $w \in L_{t,x}^2$, in which case, the only difference will be in the change of variables in \eqref{eq wfgh}. In fact, by not touching on $\widetilde{w}_{n_0}(\tau , n_0)  = \widehat{\overline{w}_{n_0} }(\tau) $, the rest of the argument follows perfectly, but resulting the appearance of $L_{t,x}^2$ norm of $w$ in the bound
\begin{align}\label{eq J bdd''}
J(\underline{n}) = \int_{\R \times \Theta} \overline{w}  \cdot f  \cdot \overline{g} \cdot h \, dx dt  \lesssim \frac{n_2^{\frac{3}{2}} n_3^{\frac{1}{2}}}{n_0^2} \norm{w}_{L_{t,x}^2} \norm{f}_{X^{0, b}} \norm{g}_{X^{0, b}}  \norm{h}_{X^{0, b}}    . 
\end{align}
\end{enumerate}
\end{rmk}

\section{Energy increment}\label{sec energy increment}
In this section, we compute the energy increment of the I-operator modified equation on a short time interval. This will be the key ingredient in this iterative argument in Section \ref{sec gwp}.

\begin{prop}[Energy increment]\label{prop energy increment}
Consider $u$ as in \eqref{fNLSI} with $\alpha \in (\frac{1}{2} ,1]$  defined on  $[0, \delta ] \times \Theta$, then for $s > \frac{1}{2}$ and sufficiently large  $N$, the solution $u$ satisfies the following energy increment
\begin{align*}
E(Iu(\delta)) -E(Iu(0))  & \lesssim   N^{\frac{1}{2} - \alpha+} N^{-\frac{4(\alpha -s)(b- b(\alpha-) )}{1 + 2b -4b(s)}}    \norm{Iu_0}_{H^{\alpha}}^4 +   N^{2- 3\alpha- }  \norm{Iu_0}_{H^{\alpha}}^4   \\
& \quad + N^{ 2-3\alpha}  N^{-\frac{\alpha -s}{1 + 2b -4b(s)}}     \norm{Iu_0}_{H^{\alpha}}^6 + N^{ \frac{7}{2} -6\alpha +}     \norm{Iu_0}_{H^{\alpha}}^6 + N^{ 2-4\alpha +}  \norm{Iu_0}_{H^{\alpha}}^6 .
\end{align*}
\end{prop}

\begin{proof}[Proof of Proposition \ref{prop energy increment}]
We first start with writing the energy conservation
\begin{align*}
\frac{d}{dt} E( u(t)) & = \re \int_{\Theta} \overline{u_t} (\abs{u}^2 u + (-\Delta)^{\alpha} u) \, dx  = \re \int_{\Theta} \overline{u_t} (\abs{u}^2 u + (-\Delta)^{\alpha} u - i u_t) \, dx = 0 .
\end{align*}
Similarly we can compute the rate of change in the energy of the modified equation \eqref{fNLSI}. 
\begin{align*}
\frac{d}{dt} E( I u(t)) &  = \re \int_{\Theta} \overline{Iu_t} (\abs{Iu}^2 Iu + (-\Delta)^{\alpha} Iu - i Iu_t) \, dx  = \re \int_{\Theta} \overline{Iu_t} (\abs{Iu}^2 Iu - I(\abs{u}^2 u) ) \, dx \\
& = \im \int_{\Theta} \overline{(-\Delta)^{\alpha} I u} (\abs{Iu}^2 Iu - I(\abs{u}^2 u) ) \, dx  + \im \int_{\Theta} \overline{ I (\abs{u}^2 u) } (\abs{Iu}^2 Iu - I(\abs{u}^2 u) ) \, dx .
\end{align*}
Then by the fundamental theorem of calculus, we obtain
\begin{align}
& \quad E(Iu)(\delta) -  E(Iu)(0) \notag\\
& = \im \int_0^{\delta} \int_{\Theta} \overline{(-\Delta)^{\alpha} I u} (\abs{Iu}^2 Iu - I(\abs{u}^2 u) ) \, dx dt +  \im \int_0^{\delta} \int_{\Theta} \overline{ I (\abs{u}^2 u) } (\abs{Iu}^2 Iu - I(\abs{u}^2 u) ) \, dx  dt\notag \\
& = : \text{Term I} + \text{Term II} . \label{eq 2term}
\end{align}
To conclude the energy increment in this proposition, we just need to estimate the two terms in \eqref{eq 2term}.

\subsection{Estimate on Term I}

First we decompose each $u$ in Term I as we did in \eqref{eq u decomp} in Lemma \ref{lem nonlinear est}, then write for the quadruple $\underline{N} = (N_0, N_1, N_2, N_3)$, 
\begin{align*}
\text{Term I} \sim \sum_{\underline{N}} \int_0^{\delta} \int_{\Theta}  \overline{(-\Delta)^{\alpha} I  u_{0}}   (Iu_{1}  \overline{Iu_{2}} Iu_{3}  - I (u_1 \overline{u_2} u_3) ) \, dxdt ,
\end{align*}
where $u_{i} = \sum_{N_i \leq \inner{z_n} < 2N_i} P_n u$, $i = 0, 1 ,2, 3$.

Let
\begin{align*}
\text{Term I}  (\underline{N}) : = \int_0^{\delta} \int_{\Theta}  \overline{(-\Delta)^{\alpha}  I u_{0}}   (Iu_{1}  \overline{Iu_{2}} Iu_{3}  - I (u_1 \overline{u_2} u_3) ) \, dxdt .
\end{align*}
Without loss of generality, we assume $N_1 \geq N_2 \geq N_3$, and analyze the following different scenarios. Let us outline the cases that we will be considering.
\begin{enumerate}[-]
\item
{\bf Case I-1}: trivial cases;
\item
{\bf Case I-2}: the maximum frequency is much larger than the second highest frequency;
\item
The largest two frequencies are comparable;
\begin{enumerate}[$*$]
\item
{\bf Case I-3}: the largest two frequencies are $N_1 = N_0$,
\item
{\bf Case I-4}: the largest two frequencies are $N_1 =N_2$.
\end{enumerate}
\end{enumerate}

\noindent {\bf Case I-1:} The trivial cases. After the decomposition in frequencies, we have the following  two trivial cases:
\begin{itemize}
\item
All frequencies are not comparable to $N$, that is $N_0, N_1 , N_2, N_3 \ll N$,
\item
$N_0 = N_1 \geq N \geq N_2 \geq N_3$.
\end{itemize}
In  both cases, we have
\begin{align*}
\text{Term I}  (\underline{N}) =0 ,
\end{align*}
hence
\begin{align*}
\sum_{\underline{N} \in \textbf{ Case I-1}} \text{Term I}  (\underline{N}) =0 .
\end{align*}

Now we focus on the regime where at least the maximum frequency is larger than $N$, that is $\max \{ N_0, N_1 , N_2, N_3 \} \gtrsim N$. 

Using the abuse the notation in Remark \ref{rmk abuse}, we write
\begin{align}
\text{Term I}  (\underline{N}) &  \leq  \abs{ \int_0^{\delta} \int_{\Theta}  \overline{(-\Delta)^{\alpha} I  u_{0}}   (Iu_{1}  \overline{Iu_{2}} Iu_{3}  ) \, dxdt} + \abs{\int_0^{\delta} \int_{\Theta}  \overline{(-\Delta)^{\alpha} I  u_{0}}    I (u_1 \overline{u_2} u_3)  \, dxdt} \notag\\
& \lesssim  \abs{ \int_0^{\delta} \int_{\Theta}  \overline{(-\Delta)^{\alpha}  I u_{0}}   (Iu_{1}  \overline{Iu_{2}} Iu_{3}  ) \, dxdt} +  \frac{m (N_0)}{m (N_1) m (N_2) m (N_3)}  \abs{\int_0^{\delta} \int_{\Theta}  \overline{(-\Delta)^{\alpha} I  u_{0}}    (I u_1 \overline{I u_2} Iu_3)  \, dxdt} \notag\\
& = (1+ \frac{m (N_0)}{m (N_1) m (N_2) m (N_3)})  \abs{\int_0^{\delta} \int_{\Theta}  \overline{(-\Delta)^{\alpha} I  u_{0}}    (I u_1 \overline{I u_2} Iu_3)  \, dxdt} \notag \\
&  = :  M (\underline{N}) \times \text{Term I}' (\underline{N})  . \label{eq term I}
\end{align}

\noindent {\bf Case I-2:} The maximum frequency is much larger than the second highest frequency.

\noindent {\bf Case I-2a:} $\max \{ n_0, n_1 , n_2, n_3 \} =n_0  \gtrsim N$ and $n_0 \gg n_1$.

Take $\textbf{Term I}' (\underline{N})$ in \eqref{eq term I} first. 
Applying  the weak interaction between frequency localized functions in Proposition \ref{prop weak} (where we take $w = P_{n_0} I u$, $f = P_{n_1} I u$, $g =P_{n_2} I u$ and $h =P_{n_3} I u$) and taking out the derivative on $P_{n_0} u$, we are able to write 
\begin{align*}
\text{Term I}' (\underline{n})  \lesssim n_0^{2\alpha} \frac{n_2^{\frac{3}{2}} n_3^{\frac{1}{2}}}{n_0^2} \frac{\norm{P_{n_0} I u}_{X^{0, b}} \norm{P_{n_1} I u}_{X^{0, b}} \norm{P_{n_2} I u}_{X^{0, b}}  \norm{P_{n_3} I u}_{X^{0, b}} }{ \inner{   n_0^{2\alpha} - n_1^{2\alpha} + n_2^{2\alpha} - n_3^{2\alpha}}^{b}}   ,
\end{align*}
where $b = \frac{1}{2}+$. Note that  in the rest of the proof we will constantly use the notation $b = \frac{1}{2}+$ for simplicity.

Then we estimate $M(\underline{n})$ by 
\begin{align}\label{eq Mn}
M(\underline{n}) \lesssim 
\begin{cases}
\parenthese{\frac{N}{n_0}}^{\alpha -s} & \text{ if } n_3 \leq n_2 \leq  n_1 \leq N \lesssim n_0\\
\parenthese{\frac{n_1}{n_0}}^{\alpha -s} &  \text{ if } n_3 \leq n_2 \leq  N  \leq n_1 \lesssim n_0\\
\parenthese{\frac{n_1 n_2}{n_0 N}}^{\alpha -s} &  \text{ if } n_3 \leq N \leq  n_2  \leq n_1 \lesssim n_0\\
\parenthese{\frac{n_1 n_2 n_3}{n_0 N^2}}^{\alpha -s}  &  \text{ if } N \leq n_3 \leq  n_2  \leq n_1 \lesssim n_0 .
\end{cases}
\end{align}
Now summing over all $n_i$ using Cauchy-Schwarz inequality, Bernstein inequality and Definition \ref{defn Xsb}, we have
\begin{align*}
\sum_{\underline{n}} \text{Term I} (\underline{n})  & \lesssim \sum_{\underline{n} } M(\underline{n}) \cdot  n_0^{2\alpha} \frac{n_2^{\frac{3}{2}} n_3^{\frac{1}{2}} }{n_0^2} \frac{\norm{P_{n_0} I u}_{X^{0, b}} \norm{P_{n_1} I u}_{X^{0, b}} \norm{P_{n_2} I u}_{X^{0, b}}  \norm{P_{n_3} I u}_{X^{0, b}} }{ \inner{   n_0^{2\alpha} - n_1^{2\alpha} + n_2^{2\alpha} - n_3^{2\alpha}}^{b}}  \\
& \lesssim \parenthese{\sum_{\underline{n}}  \parenthese{\frac{n_2^{\frac{3}{2}} n_3^{\frac{1}{2}}}{n_0^2} \frac{M(\underline{n}) \cdot  n_0^{2\alpha} }{n_0^{\alpha} n_1^{\alpha} n_2^{\alpha} n_3^{\alpha}  \inner{   n_0^{2\alpha} - n_1^{2\alpha} + n_2^{2\alpha} - n_3^{2\alpha}}^{b}} }^2}^{\frac{1}{2}}  \norm{Iu}_{X_{\delta}^{\alpha, b}}^4 .
\end{align*}
Notice that $n_0 \gg n_1$ implies $\inner{   n_0^{2\alpha} - n_1^{2\alpha} + n_2^{2\alpha} - n_3^{2\alpha}} \gtrsim \inner{n_0^{2\alpha}} $. 
Then we compute the sum. In the first case $M(\underline{n}) = \parenthese{\frac{N}{n_0}}^{\alpha -s} $, we have
\begin{align*}
& \quad \sum_{\underline{n}}   \parenthese{ \frac{n_2^{\frac{3}{2}} n_3^{\frac{1}{2}}}{n_0^2}   \parenthese{  \frac{N}{n_0}}^{\alpha-s} \frac{ n_0^{2\alpha}}{n_0^{\alpha} n_1^{\alpha} n_2^{\alpha} n_3^{\alpha}  \inner{   n_0^{2\alpha} - n_1^{2\alpha} + n_2^{2\alpha} - n_3^{2\alpha}}^{b}} }^2 \lesssim \sum_{\underline{n}}    \frac{n_2^3 n_3}{n_0^4} \frac{ N^{2\alpha -2s} \cdot n_0^{2\alpha}}{ n_0^{2\alpha -2s} n_1^{2\alpha} n_2^{2\alpha} n_3^{2\alpha}  \inner{   n_0^{2\alpha} }^{1+}}  \\
& \lesssim N^{2\alpha -2s} \sum_{n_0} n_0^{-4-2\alpha +2s -}\sum_{n_1} n_1^{-2\alpha}\sum_{n_2} n_2^{3-2\alpha} \sum_{n_3} n_3^{1-2\alpha}\\
& \lesssim  N^{2\alpha -2s} \sum_{n_0} n_0^{3 - 8 \alpha +2s- } \lesssim N^{4-6\alpha -} .
\end{align*}
The same bound holds for  other cases of $M(\underline{n})$ in \eqref{eq Mn}. 

Therefore, 
\begin{align*}
\sum_{\underline{n} \in \textbf{ Case I-2a}} \text{Term I}  (\underline{n}) \lesssim N^{2-3\alpha - } \norm{Iu}_{X_{\delta}^{\alpha, b}}^4 .
\end{align*}

\noindent {\bf Case I-2b:} $\max \{ n_0, n_1 , n_2, n_3 \} =n_1 \gtrsim N $ and $n_1 \gg  \max \{ n_0 , n_2 \}$. 

This case is in fact similar to the previous  {\bf Case I-2a}. Using 
\begin{align}\label{eq Mn2}
M(\underline{n}) \lesssim 
\begin{cases}
\parenthese{\frac{n_1}{N}}^{\alpha -s} & \text{ if } n_3 , n_2 , n_0 \leq N \lesssim n_1 \\
\parenthese{\frac{n_1}{n_0}}^{\alpha -s} &  \text{ if } n_3 \leq n_2 \leq  N  \lesssim n_0 \leq n_1\\
\parenthese{\frac{n_1 n_2}{ N^2}}^{\alpha -s} &  \text{ if } n_3, n_0 \leq N \lesssim  n_2  \leq n_1 \\
\parenthese{\frac{n_1 n_2 n_0}{ N^3}}^{\alpha -s}  &  \text{ if }  n_3 \leq  N \lesssim n_2  \leq n_1, N \leq n_0 \\
\parenthese{\frac{n_1 n_2 n_3}{N^3}}^{\alpha -s}  &  \text{ if }  n_0 \leq  N \lesssim n_3 \leq n_2  \leq n_1 \\
\parenthese{\frac{n_1 n_2 n_3}{n_0 N^2}}^{\alpha -s}  &  \text{ if }   N \lesssim n_3 \leq n_2  \leq n_1, N \leq n_0 .
\end{cases}
\end{align}
and similar calculation  in {\bf Case I-2a}, we see that
\begin{align*}
 \sum_{\underline{n} \in \textbf{  Case I-2b}} \text{Term I}  (\underline{n}) \lesssim   N^{2- 3\alpha- } \norm{Iu}_{X_{\delta}^{\alpha, b}}^4 .
\end{align*}
Therefore, by Cauchy-Schwarz inequality and Definition \ref{defn Xsb} we have in {\bf  Case I-2}
\begin{align*}
 \sum_{\underline{N} \in \textbf{  Case I-2}} \text{Term I}  (\underline{N}) \lesssim   N^{2- 3\alpha- } \norm{Iu}_{X_{\delta}^{\alpha, b}}^4 .
\end{align*}

Now we focus on the case when the largest two frequencies are comparable. Under our assumption on $N_0 , N_1 , N_2 ,N_3$, there are only following two possibilities and we will discuss them separately.
\begin{itemize}
\item
{\bf Case I-3:}  $N_1 = N_0 \gtrsim N$ and $N_1 \geq N_2 \geq N_3$;
\item
{\bf Case I-4:} $N_1 = N_2 \gtrsim N$ and $N_1 \geq N_0$. 
\end{itemize}

\noindent {\bf Case I-3:} The largest two frequencies are comparable. $N_1 = N_0 \gtrsim N$.

In this case, we write
\begin{align*}
M(\underline{N}) \lesssim \frac{m(N_0)}{ m(N_1) m (N_2) m (N_3)}  \sim \frac{1}{m (N_2) m (N_3)} .
\end{align*}
Recall the bilinear estimate that we obtained in \eqref{eq inter bilinear}, 
\begin{align*}
\norm{ f_1 f_2}_{L_{t,x}^2 ((0, \delta) \times \Theta)} \lesssim  N_2^{\beta} \delta^{2(b- b(\beta) )} \norm{f_1}_{X_{\delta}^{0, b} (\Theta)} \norm{f_2}_{X_{\delta}^{0, b} (\Theta)} .
\end{align*}
for  $b(\beta) =\frac{1}{4}+ (1-\beta)\frac{1}{2}+$, $\beta \in (\frac{1}{2} , 1]$.
Taking $\beta = \alpha$ and combining with $\delta \sim N^{-\frac{2(\alpha -s)}{1 + 2b -4b(s)}} $ in Proposition \ref{prop LWPI}, we have
\begin{align}\label{eq bi6}
\norm{ f_1 f_2}_{L_{t,x}^2 ((0,\delta) \times \Theta)} \lesssim  N_2^{\alpha} N^{-\frac{4(\alpha -s)(b- b(\alpha) )}{1 + 2b -4b(s)}}    \norm{f_1}_{X_{\delta}^{0, b} (\Theta)} \norm{f_2}_{X_{\delta}^{0, b} (\Theta)} .
\end{align}

\noindent {\bf Case I-3a:} $N_1 = N_0 \gtrsim N \geq N_2 \geq N_3$.
This is trivial as showed in {\bf Case I-1}.

\noindent {\bf Case I-3b:} $N_1 = N_0 \geq N_2 \gtrsim N \geq N_3$.

Using 
\begin{align*}
M(\underline{N})  \lesssim (N^{-1} N_2)^{\alpha -s} 
\end{align*}
with H\"older inequality, Proposition \ref{prop bilinear}, \eqref{eq bi6} and Bernstein inequality, we have
\begin{align*}
\text{Term I}  (\underline{N}) & \lesssim N_0^{2\alpha} (N^{-1} N_2)^{\alpha -s} \norm{Iu_0 Iu_2}_{L_{t,x}^2 ([0,\delta] \times \Theta)} \norm{Iu_1 Iu_3}_{L_{t,x}^2 ([0,\delta] \times \Theta)} \\
& \lesssim N_0^{2\alpha} (N^{-1} N_2)^{\alpha -s} N_2^{\frac{1}{2}+} N_3^{\alpha} N^{-\frac{4(\alpha -s)(b- b(\alpha) )}{1 + 2b -4b(s)}}   \prod_{i=0}^3 \norm{Iu_i}_{X_{\delta}^{0 , b}  }  \\
& \lesssim  \frac{1}{N^{\alpha -s}}\frac{N_0^{2\alpha} N_2^{\frac{1}{2}+} N_3^{\alpha-}}{N_0^{\alpha} N_1^{\alpha} N_2^{\alpha} N_3^{\alpha}}  N^{-\frac{4(\alpha -s)(b- b(\alpha-) )}{1 + 2b -4b(s)}}   \prod_{i=0}^3 \norm{Iu_i}_{X_{\delta}^{\alpha , b}  } \\
& \lesssim  N^{\frac{1}{2} - 2\alpha+s+} N^{-\frac{4(\alpha -s)(b- b(\alpha-) )}{1 + 2b -4b(s)}}  N_2^{0-} N_3^{0-} \parenthese{\frac{N_0}{ N_1}}^{\alpha}  \prod_{i=0}^3 \norm{Iu_i}_{X_{\delta}^{\alpha , b}  }  .
\end{align*}
Therefore, by Cauchy-Schwarz inequality and Definition \ref{defn Xsb} we have
\begin{align*}
 \sum_{\underline{N} \in \textbf{  Case I-3b}} \text{Term I}  (\underline{N}) \lesssim   N^{\frac{1}{2} - 2\alpha+s +} N^{-\frac{4(\alpha -s)(b- b(\alpha-) )}{1 + 2b -4b(s)}}     \norm{Iu}_{X_{\delta}^{\alpha , b}}^3  .
\end{align*}

\noindent {\bf Case I-3c:} $N_1 = N_0 \geq N_2  \geq N_3 \gtrsim N$.

Using 
\begin{align*}
M(\underline{N}) \lesssim  (N^{-2} N_2 N_3)^{\alpha -s} 
\end{align*}
H\"older inequality, Proposition \ref{prop bilinear} and Bernstein inequality, we have
\begin{align*}
\text{Term I}  (\underline{N}) & \lesssim N_0^{2\alpha} (N^{-2} N_2 N_3)^{\alpha -s} \norm{Iu_0 Iu_2}_{L_{t,x}^2 ([0,\delta] \times \Theta)} \norm{Iu_1 Iu_3}_{L_{t,x}^2 ([0,\delta] \times \Theta)} \\
& \lesssim N_0^{2\alpha}  (N^{-2} N_2 N_3)^{\alpha -s}(N_2 N_3)^{\frac{1}{2}+}  \prod_{i=0}^3 \norm{Iu_i}_{X_{\delta}^{0 , b}  }  \\
& \lesssim N^{-2(\alpha -s)} \frac{N_0^{2\alpha} (N_2 N_3)^{\frac{1}{2}+} (N_2 N_3)^{\alpha -s}   }{N_0^{\alpha} N_1^{\alpha} N_2^{\alpha} N_3^{\alpha}}   \prod_{i=0}^3 \norm{Iu_i}_{X_{\delta}^{\alpha , b}  }  \\
& \lesssim  N^{1-2 \alpha+}  N_2^{0-} N_3^{0-} \parenthese{ \frac{N_0}{N_1}}^{\alpha} \prod_{i=0}^3 \norm{Iu_i}_{X_{\delta}^{\alpha , b}  }   .
\end{align*}
Therefor, by Cauchy-Schwarz inequality and Definition \ref{defn Xsb} we have
\begin{align*}
 \sum_{\underline{N} \in \textbf{  Case I-3c}} \text{Term I}  (\underline{N}) \lesssim   N^{1-2 \alpha+}  \norm{Iu}_{X_{\delta}^{\alpha , b}  }^4 .
\end{align*}

To sum up, the bound in { \bf Case I-3} is given by
\begin{align*}
 \sum_{\underline{N} \in \textbf{  Case I-3}} \text{Term I}  (\underline{N}) \lesssim    N^{\frac{1}{2} - 2\alpha+s +} N^{-\frac{4(\alpha -s)(b- b(\alpha-) )}{1 + 2b -4b(s)}}    \norm{Iu}_{X_{\delta}^{\alpha , b}  }^4 + N^{1-2 \alpha+}  \norm{Iu}_{X_{\delta}^{\alpha , b}  }^4 .
\end{align*}

\noindent {\bf Case I-4:} The largest two frequencies are comparable. $N_1 = N_2 \gtrsim N$ and $N_1 \geq N_0$. 

In this case, we write
\begin{align*}
M(\underline{N}) & \lesssim   \frac{m (N_0)}{m (N_1) m (N_2) m (N_3)}  \lesssim \frac{m (N_0)}{m (N_3)}  (N^{-2} N_1 N_2)^{\alpha -s} ,
\end{align*}
where
\begin{align}\label{eq Mn03}
\frac{m (N_0)}{m (N_3)} \lesssim 
\begin{cases}
1 & \text{ if } N_0 , N_3 \lesssim N\\
\parenthese{\frac{N}{N_0}}^{\alpha -s} & \text{ if } N_3 \lesssim N \lesssim N_0 \\
\parenthese{\frac{N_3}{N}}^{\alpha -s} & \text{ if } N_0 \lesssim N \lesssim N_3 \\
\parenthese{\frac{N_3}{N_0}}^{\alpha -s} & \text{ if } N \lesssim N_0 , N_3  .
\end{cases}
\end{align}
Take the first case  in \eqref{eq Mn03}, then by H\"older inequality, Proposition \ref{prop bilinear}, \eqref{eq bi6} and Bernstein inequality, we write
\begin{align*}
\text{Term I}  (\underline{N}) & \lesssim N_0^{2\alpha} (N^{-2} N_1 N_2)^{\alpha -s} \norm{Iu_0 Iu_2}_{L_{t,x}^2 ([0,\delta] \times \Theta)} \norm{Iu_1 Iu_3}_{L_{t,x}^2 ([0,\delta] \times \Theta)} \\
& \lesssim N_0^{2\alpha}  (N^{-2} N_1 N_2)^{\alpha -s}  N_0^{\frac{1}{2}+} N_3^{\alpha-} N^{-\frac{4(\alpha -s)(b- b(\alpha-) )}{1 + 2b -4b(s)}}     \prod_{i=0}^3 \norm{Iu_i}_{X_{\delta}^{0 , b}  }   \\
& \lesssim N^{-2(\alpha -s)}  \frac{N_0^{2\alpha}  (N_1 N_2 )^{\alpha -s} N_0^{\frac{1}{2}+} N_3^{\alpha-} }{N_0^{\alpha} N_1^{\alpha} N_2^{\alpha} N_3^{\alpha}} N^{-\frac{4(\alpha -s)(b- b(\alpha-) )}{1 + 2b -4b(s)}}     \prod_{i=0}^3 \norm{Iu_i}_{X_{\delta}^{\alpha , b}  }  \\
& \lesssim N^{\frac{1}{2}-\alpha+} N^{-\frac{4(\alpha -s)(b- b(\alpha-) )}{1 + 2b -4b(s)}}      N_0^{0-} N_1^{0-}  N_3^{0-} \prod_{i=0}^3 \norm{Iu_i}_{X_{\delta}^{\alpha , b}  } .
\end{align*}
The same bound hold for the second case in \eqref{eq Mn03}.

For the third  case  in \eqref{eq Mn03},  using  H\"older inequality, Proposition \ref{prop bilinear} and   Bernstein inequality, we have
\begin{align*}
\text{Term I}  (\underline{N}) & \lesssim N_0^{2\alpha} (N^{-2} N_1 N_2)^{\alpha -s} \parenthese{\frac{N_3}{N}}^{\alpha -s}  \norm{Iu_0 Iu_2}_{L_{t,x}^2 ([0,\delta] \times \Theta)} \norm{Iu_1 Iu_3}_{L_{t,x}^2 ([0,\delta] \times \Theta)} \\
& \lesssim N_0^{2\alpha}  (N^{-2} N_1 N_2)^{\alpha -s} \parenthese{\frac{N_3}{N}}^{\alpha -s}   N_0^{\frac{1}{2}+} N_3^{\frac{1}{2}+}    \prod_{i=0}^3 \norm{Iu_i}_{X_{\delta}^{0 , b}  }   \\
& \lesssim N^{-3(\alpha -s)}  \frac{N_0^{2\alpha}  (N_1 N_2 N_3)^{\alpha -s} N_0^{\frac{1}{2}+} N_3^{\frac{1}{2}+} }{N_0^{\alpha} N_1^{\alpha} N_2^{\alpha} N_3^{\alpha}}    \prod_{i=0}^3 \norm{Iu_i}_{X_{\delta}^{\alpha , b}  }  \\
& \lesssim N^{1-2\alpha+}  N_0^{0-} N_1^{0-}  N_3^{0-} \prod_{i=0}^3 \norm{Iu_i}_{X_{\delta}^{\alpha , b}  } .
\end{align*}
The same bound hold for the forth case in \eqref{eq Mn03}.

Therefor, by Cauchy-Schwarz inequality and Definition \ref{defn Xsb} we have that in {\bf Case I-4}.
\begin{align*}
 \sum_{\underline{N} \in \textbf{  Case I-4}} \text{Term I}  (\underline{N}) \lesssim   N^{\frac{1}{2}-\alpha+} N^{-\frac{4(\alpha -s)(b- b(\alpha-) )}{1 + 2b -4b(s)}}  \norm{Iu}_{X_{\delta}^{\alpha , b}  }^4 +  N^{1-2\alpha +}   \norm{Iu}_{X_{\delta}^{\alpha , b}  }^4  .
\end{align*}

Now we summarize the  estimation on  {\bf Term I}.
\begin{align*}
\text{Term I} &  \lesssim \parenthese{ \sum_{\underline{N} \in \textbf{  Case I-2}} + \sum_{\underline{N} \in \textbf{  Case I-3}} + \sum_{\underline{N} \in \textbf{  Case I-4}}  } \text{Term I} (\underline{N}) \\
&  \lesssim N^{\frac{1}{2} - \alpha+} N^{-\frac{4(\alpha -s)(b- b(\alpha-) )}{1 + 2b -4b(s)}}    \norm{Iu}_{X_{\delta}^{\alpha , b}  }^4 +   N^{2- 3\alpha- }  \norm{Iu}_{X_{\delta}^{\alpha , b}  }^4  .
\end{align*}

\subsection{Estimate on Term II}

Now let us focus on {\bf Term II}: 
\begin{align*}
\im \int_0^{\delta} \int_{\Theta} \overline{ I (\abs{u}^2 u) } (\abs{Iu}^2 Iu - I(\abs{u}^2 u) ) \, dx .
\end{align*}
Similarly, we decompose each $u$ in {\bf Term II} using the orthonomal basis $e_n$'s of the radial Dirichlet Laplacian $-\Delta$. That is, for the quadruple $\underline{N} = (N_0, N_1,  N_2 , N_3)$
\begin{align*}
\text{Term II} \sim \sum_{\underline{N}} \int_0^{\delta} \int_{\Theta}  \overline{I P_{N_0}(\abs{u}^2 u)}  (Iu_{1}  \overline{Iu_{2}} Iu_{3}  - I (u_1 \overline{u_2} u_3) ) \, dxdt ,
\end{align*}
where $u_{i} = \sum_{N_i \leq \inner{z_n} < 2N_i} P_n u$, $i = 1 ,2, 3$. 

Let 
\begin{align*}
\text{Term II} (\underline{N}) : =  \int_0^{\delta} \int_{\Theta}  \overline{I P_{N_0}(\abs{u}^2 u)}  (Iu_{1}  \overline{Iu_{2}} Iu_{3}  - I (u_1 \overline{u_2} u_3) ) \, dxdt .
\end{align*} 
Without loss of generality, we assume $N_1 \geq N_2 \geq N_3$. Let us outline the cases that we will be considering.
\begin{enumerate}[-]
\item
{\bf Case II-1}: trivial cases;
\item
{\bf Case II-2}: the maximum frequency is much larger than the minimum frequency;
\item
{\bf Case II-3}: all frequencies are comparable.
\end{enumerate}

Again, we start with the trivial cases.

\noindent {\bf Case II-1:} The trivial cases. We have two trivial cases as in {\bf Term I}
\begin{itemize}
\item
All frequencies are not comparable to $N$, that is $N_0, N_1 , N_2, N_3 \ll N$. 
\item
$N_0 = N_1 \geq N \geq N_2 \geq N_3$
\end{itemize}
In both cases, we have
\begin{align*}
\sum_{\underline{N} \in \textbf{ Case II-1}} \text{Term II}  (\underline{N}) =0 .
\end{align*}

This implies that at least the maximum frequency is larger than the cutoff frequency $N$, that is $\max \{ N_0, N_1 , N_2, N_3 \} \gtrsim N$.

Similar as in \eqref{eq term I}, we write  with abuse notation in Remark \ref{rmk abuse}.
\begin{align*}
\text{Term II}  (\underline{N}) &  \lesssim   (1+ \frac{m (N_0)}{m (N_1) m (N_2) m (N_3)})  \abs{\int_0^{\delta} \int_{\Theta}  \overline{I P_{N_0}(\abs{u}^2 u)}   (I u_1 \overline{I u_2} Iu_3)  \, dxdt}\\
&  = :  M (\underline{N}) \times \text{Term II}' (\underline{N})   .
\end{align*}
\noindent {\bf Case II-2:} The maximum frequency is much larger than minimum frequency. 

\noindent {\bf Case II-2a:} $\max \{ n_0, n_1 , n_2, n_3 \} = n_0 \gtrsim N $ and $n_0 \gg n_3$.

In this case, using the same estimate of $M(\underline{N})$ as in \eqref{eq Mn} and Remark \ref{rmk weak}, we write
\begin{align*}
\text{Term II} (\underline{n}) & \lesssim  M(\underline{n}) \frac{n_1^{\frac{3}{2}} n_2^{\frac{1}{2}}}{n_0^2}  \frac{1 }{n_1^{\alpha} n_2^{\alpha} n_3^{\alpha}}  \norm{I P_{n_0}(\abs{u}^2 u)}_{L_{t,x}^2}  \prod_{i=1}^3 \norm{P_{n_i}Iu}_{X^{\alpha, b}}  .
\end{align*}

In the following claim, we compute the term $\norm{I P_{n_0}(\abs{u}^2 u)}_{L_{t,x}^2} $ separately.
\begin{claim}\label{claim B}
\begin{align*}
B : =\norm{I P_{n_0}(\abs{u}^2 u)}_{L_{t,x}^2} & \lesssim m(n_0) B(N)  \norm{Iu}_{X_{\delta}^{\alpha , b}}^3,
\end{align*}
where
\begin{align*}
B(N) : = N^{-\frac{\alpha -s}{1 + 2b -4b(s)}}   +  N^{ \frac{3}{2} -3\alpha +} +  N^{-\alpha + } .
\end{align*}
\end{claim}

Assuming Claim \ref{claim B}, we can write
\begin{align*}
\text{Term II} (\underline{n}) & \lesssim  M(\underline{n}) \frac{n_1^{\frac{3}{2}} n_2^{\frac{1}{2}}}{n_0^2}  \frac{ m(n_0)}{n_1^{\alpha} n_2^{\alpha} n_3^{\alpha}}  \prod_{i=1}^3 \norm{P_{n_i}Iu}_{X^{\alpha, b}} \parenthese{ B(N) \norm{Iu}_{X_{\delta}^{\alpha , b}}^3 } .
\end{align*}
Using Cauchy-Schwarz inequality, \eqref{eq Mn} and Definition \ref{defn Xsb}, we obtain  that 
\begin{align*}
\sum_{\underline{n} \in \textbf{ Case II-2a}} \text{Term II} (\underline{n}) & \lesssim \parenthese{\sum_{\underline{n}}  \parenthese{\frac{n_1^{\frac{3}{2}} n_2^{\frac{1}{2}}}{n_0^2} \frac{M(\underline{n}) \cdot  m(n_0)}{n_1^{\alpha} n_2^{\alpha} n_3^{\alpha}  } }^2}^{\frac{1}{2}}  B(N) \norm{Iu}_{X_{\delta}^{\alpha, b}}^6 \\
& \lesssim  N^{\frac{3}{2} -2\alpha} B(N) \norm{Iu}_{X_{\delta}^{\alpha , b}}^6 .
\end{align*}

Now we present the calculation for Claim \ref{claim B}.
\begin{proof}[Proof of Claim \ref{claim B}]
Decompose $B$ into dyadic frequencies, then we have
\begin{align*}
\norm{I P_{n_0}(u_4 \overline{u_5} u_6)}_{L_{t,x}^2}  \sim  \frac{m (n_0)}{m (N_4) m (N_5) m (N_6)} \norm{Iu_4 Iu_5 Iu_6}_{L_{t,x}^2} = : m(n_0) \times B(\underline{N_{456}})   .
\end{align*}
Let us focus on $B (\underline{N_{456}}) $. We fist rewrite it using H\"older inequality and Sobolev embedding
\begin{align*}
B(\underline{N_{456}})  & \lesssim \frac{1}{m (N_4) m (N_5) m (N_6)} \norm{Iu_4 Iu_5}_{L_{t,x}^2} \norm{Iu_6}_{L_{t,x}^{\infty}} \\
& \lesssim \frac{1}{m (N_4) m (N_5) m (N_6)} N_6^{1-\alpha} \norm{Iu_4 Iu_5}_{L_{t,x}^2} \norm{Iu_6}_{X_{\delta}^{\alpha , b}} .
\end{align*}
Without loss of generality, we assume $N_4 \geq N_5 \geq N_6$. 

Similar as the computation that we did earlier, we will consider the following two cases
\begin{enumerate}[-]
\item
{\bf Case B-1}: all frequencies are smaller than $N$;
\item
{\bf Case B-2}: at least on frequency is larger than $N$.
\end{enumerate}

\noindent {\bf Case B-1:} $N_6 \leq N_5 \leq N_4 \ll N$.

Using
\begin{align*}
\frac{1}{m (N_4) m (N_5) m (N_6)}  = 1,
\end{align*}
with \eqref{eq bi6} and Bernstein inequality, we obtain
\begin{align*}
B(\underline{N_{456}})  & \lesssim   N_5 N^{-\frac{\alpha -s}{1 + 2b -4b(s)}}    N_6^{1-\alpha} \norm{Iu_4}_{X_{\delta}^{0 ,b}} \norm{Iu_5}_{X_{\delta}^{0 , b}} \norm{Iu_6}_{X_{\delta}^{\alpha ,b}}  \\
& \lesssim  N^{-\frac{\alpha -s}{1 + 2b -4b(s)}}    \frac{N_5  N_6^{1-\alpha}}{N_4^{\alpha} N_5^{\alpha}} \prod_{i=4}^6 \norm{Iu_i}_{X_{\delta}^{\alpha , b}}  .
\end{align*}
Then by  Cauchy-Schwarz inequality with Definition \ref{defn Xsb},
\begin{align*}
\sum_{N_4, N_5 , N_6 \in \textbf{ Case B-1}} B(\underline{N_{456}})  \lesssim  NN^{-\frac{\alpha -s}{1 + 2b -4b(s)}}    \norm{Iu}_{X_{\delta}^{\alpha , b}}^3 .
\end{align*}

\noindent {\bf Case B-2:} $N \lesssim N_4$. 

We  compute 
\begin{align}\label{eq Mn3}
\frac{1}{m (N_4) m (N_5) m (N_6)} &  \lesssim 
\begin{cases}
(N^{-1}N_4)^{\alpha -s} & \text{ if } N_6 \leq N_5 \ll N \leq N_4\\
(N^{-2}N_4 N_5)^{\alpha -s} & \text{ if } N_6  \ll N \leq N_5 \leq N_4\\
(N^{-3}N_4 N_5 N_6)^{\alpha -s}  & \text{ if } N \ll N_6  \leq N_5 \leq N_4 .
\end{cases} 
\end{align}
Now take the first case in \eqref{eq Mn3}, with  the bilinear estimate in Proposition \ref{prop bilinear}, we then obtain
\begin{align*}
B(\underline{N_{456}}) &  \lesssim    (N^{-1}N_4)^{\alpha -s}  N_5^{\frac{1}{2} +}  N_6^{1-\alpha}   \norm{Iu_4}_{X_{\delta}^{0 ,b}} \norm{Iu_5}_{X_{\delta}^{0 , b}} \norm{Iu_6}_{X_{\delta}^{\alpha ,b}}  \\
&  \lesssim   (N^{-1}N_4 )^{\alpha -s}  \frac{ N_5^{\frac{1}{2} +} N_6^{1-\alpha} }{N_4^{\alpha} N_5^{\alpha}}  \prod_{i=4}^6 \norm{Iu_i}_{X_{\delta}^{\alpha , b}} \\
& \lesssim 
\begin{cases}
N^{\frac{3}{2}-3\alpha+} N_4^{0-} N_5^{0-} N_6^{0-}   \prod_{i=4}^6 \norm{Iu_i}_{X_{\delta}^{\alpha , b}} & \text{ if } \alpha < \frac{3}{4} \\
N^{-\alpha+} N_4^{0-} N_5^{0-} N_6^{0-}   \prod_{i=4}^6 \norm{Iu_i}_{X_{\delta}^{\alpha , b}} & \text{ if }   \alpha \geq \frac{3}{4}
\end{cases}\\
& = \max \{ N^{\frac{3}{2}-3\alpha+} , N^{-\alpha+}  \} N_4^{0-} N_5^{0-} N_6^{0-}   \prod_{i=4}^6 \norm{Iu_i}_{X_{\delta}^{\alpha , b}} .
\end{align*}
Similarly, the second and third cases are bounded receptively by
\begin{align*}
\max \{ N^{\frac{3}{2}-3\alpha+} , N^{-2\alpha+s+}  \} N_4^{0-} N_5^{0-} N_6^{0-}   \prod_{i=4}^6 \norm{Iu_i}_{X_{\delta}^{\alpha , b}} 
\end{align*}
and
\begin{align*}
\max \{ N^{\frac{3}{2}-3\alpha+} , N^{-3\alpha+2s+}  \} N_4^{0-} N_5^{0-} N_6^{0-}   \prod_{i=4}^6 \norm{Iu_i}_{X_{\delta}^{\alpha , b}}  .
\end{align*}

Then by  Cauchy-Schwarz inequality with Definition \ref{defn Xsb},
\begin{align*}
\sum_{N_4, N_5 , N_6 \in \textbf{ Case B-2}} B(\underline{N_{456}})  \lesssim  \max \{ N^{ \frac{3}{2} -3\alpha +}, N^{-\alpha + } \}  \norm{Iu}_{X_{\delta}^{\alpha , b}}^3 .
\end{align*}
Therefore, by putting the two cases together, we finish the proof of Claim \ref{claim B}.
\begin{align*}
\sum_{N_4, N_5 , N_6} B(\underline{N_{456}}) &  \lesssim \parenthese{\sum_{N_4, N_5 , N_6 \in \textbf{ Case B-1}} + \sum_{N_4, N_5 , N_6 \in \textbf{ Case B-2}} }  B(\underline{N_{456}}) \\
& \lesssim  N^{-\frac{\alpha -s}{1 + 2b -4b(s)}}  \norm{Iu}_{X_{\delta}^{\alpha , b}}^3 +  N^{ \frac{3}{2} -3\alpha +} \norm{Iu}_{X_{\delta}^{\alpha , b}}^3 +  N^{-\alpha + } \norm{Iu}_{X_{\delta}^{\alpha , b}}^3 .
\end{align*}
\end{proof}

\noindent {\bf Case II-2b:} $\max \{ n_0, n_1 , n_2, n_3 \} = n_1 \gtrsim N $ and $n_1 \gg \min \{ n_0, n_3 \}$

Writing 
\begin{align*}
M(\underline{n}) \lesssim   \frac{m (n_0)}{m (n_1) m (n_2) m (n_3)} 
\end{align*}
and using  Remark \ref{rmk weak} and Claim \ref{claim B}, we have
\begin{align*}
\text{Term II}  (\underline{N}) & \lesssim  M(\underline{n}) \frac{n_2^{\frac{3}{2}} n_3^{\frac{1}{2}}}{n_1^2}  \frac{1 }{n_1^{\alpha} n_2^{\alpha} n_3^{\alpha}}  \norm{I P_{n_0}(\abs{u}^2 u)}_{L_{t,x}^2}  \prod_{i=1}^3 \norm{Iu_i}_{X^{\alpha, b}} \\
& \lesssim M(\underline{n}) \frac{n_2^{\frac{3}{2}} n_3^{\frac{1}{2}}}{n_1^2}  \frac{m(n_0)}{n_1^{\alpha} n_2^{\alpha} n_3^{\alpha}}   \prod_{i=1}^3 \norm{Iu_i}_{X^{\alpha, b}}  \parenthese{ B(N)  \norm{Iu}_{X_{\delta}^{\alpha , b}}^3 } .
\end{align*}
Applying the analysis as in \eqref{eq Mn2} and same calculation as in {\bf Case II-2a}, we obtain the bound 
\begin{align*}
\sum_{\underline{n} \in \textbf{ Case II-2b}} \text{Term II} (\underline{n}) \lesssim N^{2 -3\alpha} B(N) \norm{Iu}_{X_{\delta}^{\alpha , b}}^6 .
\end{align*}

Now we have the only case left. 

\noindent {\bf Case II-3:} All frequencies are comparable. $N_0 = N_1  = N_2 = N_3 \gtrsim N$. 

In this last case, we have
\begin{align*}
M(\underline{N})  \lesssim  \frac{m (N_0)}{m (N_1) m (N_2) m (N_3)}  \sim \frac{1}{ m (N_2) m (N_3)}   \lesssim (N^{-2} N_2 N_3)^{\alpha -s} .
\end{align*}
Then using the $M(\underline{N})$ bound above, Proposition \ref{prop bilinear}  and Claim \ref{claim B}, we write
\begin{align*}
\text{Term II} (\underline{N}) & \lesssim (N^{-2} N_2 N_3)^{\alpha -s}  \abs{\int_0^{\delta} \int_{\Theta}  \overline{I P_{N_0}(\abs{u}^2 u)}   (I u_1 \overline{I u_2} Iu_3)  \, dxdt} \\
& \lesssim  (N^{-2} N_2 N_3)^{\alpha -s}  \norm{Iu_1 Iu_2 Iu_3}_{L_{t,x}^2}  \norm{I P_{N_0}(\abs{u}^2 u)}_{L_{t,x}^2}  \\
& \lesssim \parenthese{\frac{N_2 N_3}{N^2}}^{\alpha -s} \parenthese{ \frac{ N_2^{\frac{1}{2} +} N_3^{1-\alpha} }{N_1^{\alpha} N_2^{\alpha}}  \prod_{i=1}^3 \norm{Iu_i}_{X_{\delta}^{\alpha , b}} }  \parenthese{ m(N_0) B(N)  \norm{Iu}_{X_{\delta}^{\alpha , b}}^3 } \\
& \lesssim \parenthese{\frac{N_2 N_3}{N_0 N}}^{\alpha -s} \frac{ N_2^{\frac{1}{2} +} N_3^{1-\alpha} }{N_1^{\alpha} N_2^{\alpha}} B(N)  \prod_{i=1}^3 \norm{Iu_i}_{X_{\delta}^{\alpha , b}}    \norm{Iu}_{X_{\delta}^{\alpha , b}}^3 .
\end{align*}

Then by  Cauchy-Schwarz inequality with Definition \ref{defn Xsb},
\begin{align*}
\sum_{\underline{N} \in \textbf{ Case II-3}} \text{Term II} (\underline{N})  \lesssim  N^{ \frac{3}{2} -3\alpha}  B(N)  \norm{Iu}_{X_{\delta}^{\alpha , b}}^6 .
\end{align*}

Therefore, we summarize the estimation on all the cases in {\bf Term II} ,
\begin{align*}
\text{Term II} &  \lesssim \parenthese{ \sum_{\underline{N} \in \textbf{  Case II-2a}} + \sum_{\underline{N} \in \textbf{  Case II-2b}} + \sum_{\underline{N} \in \textbf{  Case II-3}}  } \text{Term II} (\underline{N}) \\
& \lesssim   N^{ 2-3\alpha}  B(N)  \norm{Iu}_{X_{\delta}^{\alpha , b}}^6   \lesssim   N^{ 2-3\alpha}  B(N)   \norm{Iu_0}_{H^{\alpha}}^6 .
\end{align*}

Let us finish the calculation on the energy increment by combining the computation for both {\bf Term I} and {\bf Term II}
\begin{align*}
& \quad E(Iu(\delta)) - E(Iu(0)) \\
& \lesssim   N^{\frac{1}{2} - \alpha+} N^{-\frac{4(\alpha -s)(b- b(\alpha-) )}{1 + 2b -4b(s)}}    \norm{Iu_0}_{H^{\alpha}}^4 +   N^{2- 3\alpha- }  \norm{Iu_0}_{H^{\alpha}}^4   \\
& \quad + N^{ 2-3\alpha}  N^{-\frac{\alpha -s}{1 + 2b -4b(s)}}      \norm{Iu_0}_{H^{\alpha}}^6 + N^{ 2-3\alpha}  N^{ \frac{3}{2} -3\alpha +}     \norm{Iu_0}_{H^{\alpha}}^6 + N^{ 2-3\alpha} N^{-\alpha + }    \norm{Iu_0}_{H^{\alpha}}^6 \\
& \lesssim   N^{\frac{1}{2} - \alpha+} N^{-\frac{4(\alpha -s)(b- b(\alpha-) )}{1 + 2b -4b(s)}}    \norm{Iu_0}_{H^{\alpha}}^4 +   N^{2- 3\alpha- }  \norm{Iu_0}_{H^{\alpha}}^4   \\
& \quad + N^{ 2-3\alpha}  N^{-\frac{\alpha -s}{1 + 2b -4b(s)}}     \norm{Iu_0}_{H^{\alpha}}^6 + N^{ \frac{7}{2} -6\alpha +}     \norm{Iu_0}_{H^{\alpha}}^6 + N^{ 2-4\alpha +}  \norm{Iu_0}_{H^{\alpha}}^6 .
\end{align*}
The last inequality follows from Remark \ref{rmk ID}. Then we finish the proof of Proposition \ref{prop energy increment}.
\end{proof}

\section{Global well-posedness}\label{sec gwp}
In this section, we  finally show the global well-posedness result stated in Theorem \ref{thm GWP} by iteration.  We also will see the choices of the parameters in previous sections and the constraint on the regularity. As a consequence of these choices, we obtain a polynomial bound of the solution as presented in Theorem \ref{thm GWP}.

\begin{proof}[Proof of Theorem \ref{thm GWP}]
By the definition of energy and the Gagliardo–Nirenberg interpolation inequality, we have 
\begin{align*}
E(Iu_0) = \frac{1}{2} \norm{Iu_0}_{\dot{H}^{\alpha}}^2 + \frac{1}{4} \norm{Iu_0}_{L^4}^4 \lesssim \norm{Iu_0}_{\dot{H}^{\alpha}}^2 + \norm{Iu_0}_{L^2}^{4-\frac{2}{\alpha}} \norm{Iu_0}_{\dot{H}^{\alpha}}^{\frac{2}{\alpha}} \lesssim \norm{Iu_0}_{\dot{H}^{\alpha}}^{\frac{2}{\alpha}} \lesssim N^{(\alpha -s) \frac{2}{\alpha}} .
\end{align*}
Then the energy increment obtained in Proposition \ref{prop energy increment} becomes
\begin{align}
E(Iu(\delta)) -  E(Iu(0)) & \lesssim   N^{\frac{1}{2} - \alpha+} N^{-\frac{4(\alpha -s)(b- b(\alpha-) )}{1 + 2b -4b(s)}}    \norm{Iu_0}_{H^{\alpha}}^4 +   N^{2- 3\alpha- }  \norm{Iu_0}_{H^{\alpha}}^4  \notag \\
& \quad + N^{ 2-3\alpha}  N^{-\frac{\alpha -s}{1 + 2b -4b(s)}}      \norm{Iu_0}_{H^{\alpha}}^6 + N^{ \frac{7}{2} -6\alpha +}     \norm{Iu_0}_{H^{\alpha}}^6 + N^{ 2-4\alpha +}  \norm{Iu_0}_{H^{\alpha}}^6 \notag\\
& \lesssim   N^{\frac{1}{2} - \alpha+} N^{-\frac{4(\alpha -s)(b- b(\alpha-) )}{1 + 2b -4b(s)}}    N^{4(\alpha -s) } +   N^{2- 3\alpha- }  N^{4(\alpha -s) }   \notag\\
& \quad + N^{ 2-3\alpha}  N^{-\frac{\alpha -s}{1 + 2b -4b(s)}}     N^{6(\alpha -s) } + N^{ \frac{7}{2} -6\alpha +}    N^{6(\alpha -s) } + N^{ 2-4\alpha +}  N^{6(\alpha -s) } . \label{eq incr} 
\end{align}

In order to reach a fixed time $T \gg 1$, the number of steps in the iteration is at most, 
\begin{align}
\frac{T}{\delta} \sim T   N^{\frac{2(\alpha -s)}{1 + 2b -4b(s)}}  \sim T N^{\frac{2(\alpha -s)}{2s-1}} . \label{eq step}
\end{align}
Combining \eqref{eq incr}  and \eqref{eq step}, we write  the modified energy at time $T$  as
\begin{align*}
E(Iu(T)) & \lesssim E(Iu(0)) + \frac{T}{\delta} \parenthese{ N^{\frac{1}{2} - \alpha+} N^{-\frac{4(\alpha -s)(b- b(\alpha-) )}{1 + 2b -4b(s)}}    N^{4(\alpha -s) } +   N^{2- 3\alpha- }  N^{4(\alpha -s) }} \\
& \quad + \frac{T}{\delta} \parenthese{ N^{ 2-3\alpha}  N^{-\frac{\alpha -s}{1 + 2b -4b(s)}}      N^{6(\alpha -s) } + N^{ \frac{7}{2} -6\alpha +}    N^{6(\alpha -s) } + N^{ 2-4\alpha +}  N^{6(\alpha -s)  } } \\
& \lesssim N^{(\alpha -s) \frac{2}{\alpha}}   + T   N^{\frac{2(\alpha -s)}{1 + 2b -4b(s)}}  \parenthese{ N^{\frac{1}{2} - \alpha+} N^{-\frac{4(\alpha -s)(b- b(\alpha-) )}{1 + 2b -4b(s)}}    N^{4(\alpha -s) } +   N^{2- 3\alpha- }  N^{4(\alpha -s) }} \\
& \quad + T   N^{\frac{2(\alpha -s)}{1 + 2b -4b(s)}} \parenthese{ N^{ 2-3\alpha}  N^{-\frac{\alpha -s}{1 + 2b -4b(s)}}      N^{6(\alpha -s) } + N^{ \frac{7}{2} -6\alpha +}    N^{6(\alpha -s) } + N^{ 2-4\alpha +}  N^{6(\alpha -s)  } } .
\end{align*}
In order to keep this iteration valid at each step , we have to ensure that the total energy increment is always being controlled by the initial energy $E(Iu(0))$, that is
\begin{align*}
& \quad T   N^{\frac{2(\alpha -s)}{1 + 2b -4b(s)}}  \parenthese{ N^{\frac{1}{2} - \alpha+} N^{-\frac{4(\alpha -s)(b- b(\alpha-) )}{1 + 2b -4b(s)}}    N^{4(\alpha -s) } +   N^{2- 3\alpha- }  N^{4(\alpha -s) }} \\
& \quad + T   N^{\frac{2(\alpha -s)}{1 + 2b -4b(s)}} \parenthese{ N^{ 2-3\alpha}  N^{-\frac{\alpha -s}{1 + 2b -4b(s)}}      N^{6(\alpha -s) } + N^{ \frac{7}{2} -6\alpha +}    N^{6(\alpha -s) } + N^{ 2-4\alpha +}  N^{6(\alpha -s)  } }  \\
&  \lesssim E(Iu(0)) \lesssim N^{(\alpha -s) \frac{2}{\alpha}} .
\end{align*}
The requirement above implies the following five inequalities
\begin{align*}
\begin{cases}
T   N^{\frac{2(\alpha -s)}{1 + 2b -4b(s)}}  N^{\frac{1}{2} - \alpha+} N^{-\frac{4(\alpha -s)(b- b(\alpha-) )}{1 + 2b -4b(s)}}    N^{4(\alpha -s) }  \lesssim  N^{(\alpha -s) \frac{2}{\alpha}} \\
T   N^{\frac{2(\alpha -s)}{1 + 2b -4b(s)}}   N^{2- 3\alpha- }  N^{4(\alpha -s) }   \lesssim  N^{(\alpha -s) \frac{2}{\alpha}} \\
T   N^{\frac{2(\alpha -s)}{1 + 2b -4b(s)}} N^{ 2-3\alpha}  N^{-\frac{\alpha -s}{1 + 2b -4b(s)}}     N^{6(\alpha -s) }   \lesssim  N^{(\alpha -s) \frac{2}{\alpha}} \\
T   N^{\frac{2(\alpha -s)}{1 + 2b -4b(s)}}   N^{ \frac{7}{2} -6\alpha +}    N^{6(\alpha -s) }  \lesssim  N^{(\alpha -s) \frac{2}{\alpha}} \\
T   N^{\frac{2(\alpha -s)}{1 + 2b -4b(s)}}  N^{ 2-4\alpha +}  N^{6(\alpha -s)  }   \lesssim  N^{(\alpha -s) \frac{2}{\alpha}} .
\end{cases}
\end{align*}
whose solutions are give by
\begin{align*}
\begin{cases}
s > s_1(\alpha) : = \frac{1}{4} \parenthese{\frac{4\alpha^2 - \alpha - 1}{2\alpha-1}  + \sqrt{ \frac{5\alpha^2 -4\alpha +1}{(2\alpha -1)^2}} }  \\
s > s_2 (\alpha) : = \frac{1}{4} \parenthese{ \frac{\alpha^2 + \alpha -1}{2\alpha -1}  + \sqrt{\frac{\alpha^4 + 10 \alpha^3 -5\alpha^2 - 2\alpha +1}{(2\alpha-1)^2}}} , \alpha > \frac{2}{3}\\
s > s_3 (\alpha) : =  \frac{1}{8} \parenthese{\frac{6\alpha^2 + 5\alpha -2}{3\alpha -1} + \sqrt{\frac{36 \alpha^4 - 36 \alpha^3 +33 \alpha^2 -20 \alpha +4}{(3\alpha -1)^2}}}  , \alpha > \frac{2}{3}\\
s > s_4 (\alpha) : =  \frac{1}{8} \parenthese{\frac{7\alpha -2}{3\alpha -1} + \sqrt{\frac{96\alpha^3 - 55\alpha^2 -4 \alpha +4}{(3\alpha -1)^2}}} , \alpha > \frac{7}{12}\\
s > s_5 (\alpha) : = \frac{2\alpha^2 + 2\alpha -1}{6\alpha -2} .
\end{cases}
\end{align*}
Let us remark here that the restriction $\alpha \in (\frac{2}{3} , 1]$ on the fractional Laplacian in this paper comes from the solutions above. If we track further back to the place where we obtained the second and the third conditions, we see that in {\bf Case I-2}, the bounds of both {\bf Case I-2a} and {\bf Case I-2b} are $N^{2-3\alpha-} \norm{Iu}_{X_{\delta}^{\alpha,b}}^4$. In the almost conservation law of energy, we anticipate a small factor $N^{2-3\alpha-}$, hence $2-3\alpha <0$ and $\alpha > \frac{2}{3}$.

We can see the global well-posedness index easily  from the picture below.
\begin{center}
\begin{tikzpicture}
\begin{axis}[ legend pos=outer north east,
    axis lines = left,
    xlabel = $\alpha$,
    ylabel = {$s_i(\alpha)$},
]

\addplot [
	dashed, mark=*, mark options={scale=0.8},
    domain=2/3:1, 
    samples=30, 
]
{ 0.25* ((-4 *x^2 + x + 1)/(1 - 2* x) + sqrt((5 *x^2 - 4 *x + 1)/(1 - 2* x)^2)) };
\addlegendentry{$s_1$}

\addplot [
    domain=2/3:1, 
    samples=100, 
    ]
    {(0.25 *(x^2 + x - 1))/(2 *x - 1) + 0.25 *sqrt((x^4 + 10 *x^3 - 5 *x^2 - 2 *x + 1)/(2 *x - 1)^2) };
\addlegendentry{$s_2$}

\addplot [
	dashed, 
    domain=2/3:1, 
    samples=30, 
    ]
     {(0.125 *(6 * x^2 + 5 *x - 2))/(3 *x - 1) + 0.125 * sqrt((36 *x^4 - 36 *x^3 + 33 *x^2 - 20 *x + 4)/(3 *x - 1)^2) };
\addlegendentry{$s_3$}

\addplot [
	dotted, mark=square*, mark options={scale=0.3},
    domain=2/3:1, 
    samples=30, 
    ]
    {0.125 *sqrt((96 *x^3 - 55 *x^2 - 4 *x + 4)/(3 *x - 1)^2) + (0.125 * (7 *x - 2))/(3 *x - 1) };
\addlegendentry{$s_4$}

\addplot [
	dashed, mark=*, mark options={scale=0.3},
    domain=2/3:1, 
    samples=30, 
    ]
    {(2*x^2 + 2* x -1)/(6 *x -2)};
\addlegendentry{$s_5$}
\end{axis}
\end{tikzpicture}
\end{center}

Therefore,  the global well-posedness index that we obtain in this work is
\begin{align*}
s > s_* (\alpha) = \max \{ s_1(\alpha) , s_2(\alpha) \}  . 
\end{align*}
Moreover, with  the choice of $N$ solved from $s_1(\alpha) , s_2(\alpha)$, we have
\begin{align*}
T \sim \min \{ N^{(\alpha -s)(\frac{2}{\alpha} -4 - \frac{2\alpha+1}{2s-1}) + \alpha - \frac{1}{2}-} , N^{(\alpha -s) (\frac{2}{\alpha} -4) + 3\alpha -2+}  \} = : N^p.
\end{align*}
Then as a consequence of the I-method, we establish the following polynomial bound of the global solution 
\begin{align*}
\norm{u(T)}_{H^s (\Theta)} \lesssim \norm{Iu(T)}_{H^{\alpha} (\Theta)} \lesssim E(Iu(T))^{\frac{1}{2}} \sim N^{(\alpha -s)\frac{1}{\alpha}} = T^{(\alpha -s)\frac{p}{\alpha}} .
\end{align*}
Now we finish the proof of Theorem \ref{thm GWP}.
\end{proof}

\appendix
\section{Some calculations needed in the proof of Claim \ref{claim e_n}}\label{sec Appendix}

\subsection{Anti-derivative of $F(r)$}
Recall that  in Claim \ref{claim e_n}, we wrote $F$ function in the following form
\begin{align*}
F(r) & = \int_0^r \gamma \, e_{n_0}(\gamma) \, e_{n_1}(\gamma) \, d \gamma  =  \norm{J_0 (z_{n_0} \cdot)}_{L^2(\Theta)}^{-1} \norm{J_0 (z_{n_1} \cdot)}_{L^2(\Theta)}^{-1} \int_0^r \gamma \, J_0 (z_{n_0} \gamma)\, J_0 (z_{n_1} \gamma) \, d \gamma .
\end{align*}
Now we compute the anti-derivative of $F(r)$. 
\begin{lem}\label{lem intF}
Let $a \neq b$, then
\begin{align*}
\int_0^r  \gamma J_0(a \gamma) \, J_0 (b \gamma) \, d \gamma = \frac{1}{a^2 - b^2} (a r J_1 (ar)  \, J_0 (br) -  b r J_1 (br)  \, J_0 (ar)) .
\end{align*}
\end{lem}

\begin{proof}[Proof of Lemma \ref{lem intF}]
Using \eqref{eq dJ0} and \eqref{eq dJ1} with integration by parts, we write
\begin{align*}
\int_0^r a^2 \gamma J_0(a \gamma)  \, J_0 (b \gamma) \, d \gamma & = a \int_0^r J_0 (b \gamma) d ( \gamma J_1 (a \gamma))   = a r J_1 (ar)  \, J_0 (br) + ab \int_0^r J_1 (a\gamma)  \, J_1 (b \gamma) \, d \gamma \\
\int_0^r b^2 \gamma J_0(a \gamma)  \, J_0 (b \gamma) \, d \gamma & = b \int_0^r J_0 (a \gamma) d ( \gamma J_1 (b \gamma))   = b r J_1 (br)  \, J_0 (ar) + ab \int_0^r J_1 (a\gamma)  \, J_1 (b \gamma) \, d \gamma .
\end{align*}
Then taking the difference of the equations above yields the following anti-derivative
\begin{align*}
\int_0^r  \gamma J_0(a \gamma) \, J_0 (b \gamma) \, d \gamma = \frac{1}{a^2 - b^2} (a r J_1 (ar)  \, J_0 (br) -  b r J_1 (br)  \, J_0 (ar)) .
\end{align*}
\end{proof}

\subsection{Control of error terms}\label{ssec Error}

Recall that we used the following formulas in \eqref{eq J0} and \eqref{eq Jinfty} 
\begin{align*}
&\text{ when }\abs{x} < 1, & J_n(x) & = \frac{1}{n! 2^n}  x^n + \mathcal{O} (x^{n+2}) ,   \\
& \text{ when }\abs{x} \geq 1,  & J_n(x) & = \sqrt{\frac{2}{\pi}} \frac{\cos(x- \frac{n \pi}{2} - \frac{ \pi}{4})}{\sqrt{x}} + \mathcal{O} (x^{-\frac{3}{2}}) .
\end{align*}
to approximate $J_0$ and $J_1$ in the proof of Claim \ref{claim e_n}. We dealt with the contribution from the main terms above, and in this subsection we will verify that the error terms (namely, $\mathcal{O} (x^{n+2})$ when $\abs{x} <1$ and $\mathcal{O} (x^{-\frac{3}{2}})$ when $\abs{x} \geq 1$) share the same bounds.

\noindent {\bf Case I:} $0 \leq r < \frac{1}{z_{n_0}}$.

The error term in the case will be dominated by the error in estimating $J_1(z_{n_0} r)$, which is $\mathcal{O} ((z_{n_0}r)^3)$, hence it is bounded by
\begin{align*}
& \quad  \frac{ \sqrt{z_{n_0} z_{n_1} z_{n_2} z_{n_3}}  z_{n_0} z_{n_2}}{z_{n_0}^2 -z_{n_1}^2}  \int_0^{\frac{1}{z_{n_0}}}  r   (z_{n_0} r)^{3} J_0 (z_{n_1} r)   J_1 (z_{n_2} r)  J_0 (z_{n_3} r)  \, dr\\
& \sim  \frac{ \sqrt{z_{n_0} z_{n_1} z_{n_2} z_{n_3}}  z_{n_0} z_{n_2}}{z_{n_0}^2 -z_{n_1}^2}   \int_0^{\frac{1}{z_{n_0}}}  r   (z_{n_0} r)^{3}  (z_{n_2} r)   \, dr\\
& \sim  \frac{ \sqrt{z_{n_0} z_{n_1} z_{n_2} z_{n_3}}  z_{n_0} z_{n_2}}{z_{n_0}^2 -z_{n_1}^2} z_{n_0}^3 z_{n_2} \,  r^6 \Big|_0^{\frac{1}{z_{n_0}}} \sim \frac{ n_1^{\frac{1}{2}} n_2^{\frac{5}{2}} n_3^{\frac{1}{2}}}{n_0^{\frac{3}{2}}(n_0^2 -n_1^2)} .
\end{align*}

\noindent {\bf Case II:} $\frac{1}{z_{n_0}} \leq r < \frac{1}{z_{n_1}}$.

The error term from  estimating $J_1(z_{n_0} r)$ is $\mathcal{O} ((z_{n_0}r)^{-\frac{3}{2}})$, and its contribution is 
\begin{align*}
& \quad \frac{ \sqrt{z_{n_0} z_{n_1} z_{n_2} z_{n_3}}  z_{n_0} z_{n_2}}{z_{n_0}^2 -z_{n_1}^2}  \int_{\frac{1}{z_{n_0}}}^{\frac{1}{z_{n_1}}} r   (z_{n_0} r)^{-\frac{3}{2}} J_0 (z_{n_1} r)   J_1 (z_{n_2} r)  J_0 (z_{n_3} r)  \, dr\\
& \sim \frac{ \sqrt{z_{n_0} z_{n_1} z_{n_2} z_{n_3}}  z_{n_0} z_{n_2}}{z_{n_0}^2 -z_{n_1}^2}  \int_{\frac{1}{z_{n_0}}}^{\frac{1}{z_{n_1}}} r (z_{n_0} r)^{-\frac{3}{2}}  (z_{n_2} r)  \, dr\\
& \sim  \frac{ \sqrt{z_{n_0} z_{n_1} z_{n_2} z_{n_3}}  z_{n_0} z_{n_2}}{z_{n_0}^2 -z_{n_1}^2}  z_{n_0}^{-\frac{3}{2}} z_{n_2} \int_{\frac{1}{z_{n_0}}}^{\frac{1}{z_{n_1}}} r^{\frac{1}{2}}   \, dr\\
& \sim  \frac{ \sqrt{z_{n_0} z_{n_1} z_{n_2} z_{n_3}}  z_{n_0} z_{n_2}}{z_{n_0}^2 -z_{n_1}^2} z_{n_0}^{-\frac{3}{2}} z_{n_2} r^{\frac{3}{2}}  \Big|_{\frac{1}{z_{n_0}}}^{\frac{1}{z_{n_1}}}  \lesssim \frac{ n_2^{\frac{5}{2}} n_3^{\frac{1}{2}}}{n_1 (n_0^2 - n_1^2)} .
\end{align*}

The error term from  estimating $J_0(z_{n_1} r)$ is $\mathcal{O} ((z_{n_1}r)^{2})$, and its contribution is 
\begin{align*}
& \quad \frac{ \sqrt{z_{n_0} z_{n_1} z_{n_2} z_{n_3}}  z_{n_0} z_{n_2}}{z_{n_0}^2 -z_{n_1}^2}  \int_{\frac{1}{z_{n_0}}}^{\frac{1}{z_{n_1}}} r   J_1 (z_{n_0} r) (z_{n_1} r)^2   J_1 (z_{n_2} r)  J_0 (z_{n_3} r)  \, dr\\
& \sim \frac{ \sqrt{z_{n_0} z_{n_1} z_{n_2} z_{n_3}}  z_{n_0} z_{n_2}}{z_{n_0}^2 -z_{n_1}^2}  \int_{\frac{1}{z_{n_0}}}^{\frac{1}{z_{n_1}}} r  \frac{\sin (z_{n_0} r -\frac{\pi}{4})}{\sqrt{z_{n_0} r}} (z_{n_1} r)^2    (z_{n_2} r)   \, dr\\
& \sim \frac{ \sqrt{z_{n_0} z_{n_1} z_{n_2} z_{n_3}}  z_{n_0} z_{n_2}}{z_{n_0}^2 -z_{n_1}^2}  \frac{1}{\sqrt{z_{n_0}}} z_{n_1}^2 z_{n_2} \int_{\frac{1}{z_{n_0}}}^{\frac{1}{z_{n_1}}} r^{\frac{7}{2}} \sin (z_{n_0} r -\frac{\pi}{4}) \, dr \\
& \lesssim \frac{ \sqrt{z_{n_0} z_{n_1} z_{n_2} z_{n_3}}  z_{n_0} z_{n_2}}{z_{n_0}^2 -z_{n_1}^2} \frac{1}{\sqrt{z_{n_0}}}  z_{n_1}^2 z_{n_2} (\frac{z_{n_0}}{z_{n_1}})^{\frac{7}{2}} z_{n_0}^{-\frac{9}{2}} \sim \frac{n_2^{\frac{5}{2}} n_3^{\frac{1}{2}}}{n_1 (n_0^2 -n_1^2)} .
\end{align*}

\noindent {\bf Case III:} $\frac{1}{z_{n_1}} \leq r < \frac{1}{z_{n_2}}$.

The error term from estimating $J_0 (z_{n_1} r)$ is $\mathcal{O} ((z_{n_1}r)^{-\frac{3}{2}})$, and its contribution is 
\begin{align*}
& \quad \frac{ \sqrt{z_{n_0} z_{n_1} z_{n_2} z_{n_3}}  z_{n_0} z_{n_2}}{z_{n_0}^2 -z_{n_1}^2}  \int_{\frac{1}{z_{n_1}}}^{\frac{1}{z_{n_2}}} r   J_1 (z_{n_0} r) (z_{n_1}r)^{-\frac{3}{2}}  J_1 (z_{n_2} r)  J_0 (z_{n_3} r)  \, dr\\
& \sim \frac{ \sqrt{z_{n_0} z_{n_1} z_{n_2} z_{n_3}}  z_{n_0} z_{n_2}}{z_{n_0}^2 -z_{n_1}^2}   \int_{\frac{1}{z_{n_1}}}^{\frac{1}{z_{n_2}}} r \frac{\sin (z_{n_0} r -\frac{\pi}{4})}{\sqrt{z_{n_0} r}} (z_{n_1}r)^{-\frac{3}{2}}  (z_{n_2} r) \, dr \\
& \sim \frac{ \sqrt{z_{n_0} z_{n_1} z_{n_2} z_{n_3}}  z_{n_0} z_{n_2}}{z_{n_0}^2 -z_{n_1}^2}   \frac{1}{\sqrt{z_{n_0} }} z_{n_1}^{-\frac{3}{2}} z_{n_2}  \int_{\frac{1}{z_{n_1}}}^{\frac{1}{z_{n_2}}}    \sin (z_{n_0} r -\frac{\pi}{4}) \, dr \\
& \lesssim  \frac{ \sqrt{z_{n_0} z_{n_1} z_{n_2} z_{n_3}}  z_{n_0} z_{n_2}}{z_{n_0}^2 -z_{n_1}^2}   \frac{1}{\sqrt{z_{n_0} }} z_{n_1}^{-\frac{3}{2}} z_{n_2}  z_{n_0}^{-1} \sim \frac{n_2^{\frac{5}{2}} n_3^{\frac{1}{2}}}{ n_1(n_0^2 -n_1^2)}.
\end{align*}

The error term from estimating $J_1 (z_{n_2} r)$ is $\mathcal{O} ((z_{n_2} r)^3)$,  and its contribution is 
\begin{align*}
& \quad \frac{ \sqrt{z_{n_0} z_{n_1} z_{n_2} z_{n_3}}  z_{n_0} z_{n_2}}{z_{n_0}^2 -z_{n_1}^2}  \int_{\frac{1}{z_{n_1}}}^{\frac{1}{z_{n_2}}} r   J_1 (z_{n_0} r) J_0 (z_{n_1}r)   (z_{n_2} r)^3  J_0 (z_{n_3} r)  \, dr\\
& \sim \frac{ \sqrt{z_{n_0} z_{n_1} z_{n_2} z_{n_3}}  z_{n_0} z_{n_2}}{z_{n_0}^2 -z_{n_1}^2}   \int_{\frac{1}{z_{n_1}}}^{\frac{1}{z_{n_2}}} r \frac{\sin (z_{n_0} r -\frac{\pi}{4})}{\sqrt{z_{n_0} r}} \frac{\cos (z_{n_1} r -\frac{\pi}{4})}{\sqrt{z_{n_1} r}}  (z_{n_2} r)^3 \, dr \\
& \sim \frac{ \sqrt{z_{n_0} z_{n_1} z_{n_2} z_{n_3}}  z_{n_0} z_{n_2}}{z_{n_0}^2 -z_{n_1}^2}   \frac{1}{\sqrt{z_{n_0} z_{n_1}}}  z_{n_2}^3  \int_{\frac{1}{z_{n_1}}}^{\frac{1}{z_{n_2}}}  r^3  \sin (z_{n_0} r -\frac{\pi}{4}) \cos (z_{n_1} r -\frac{\pi}{4}) \, dr \\
& \lesssim  \frac{ \sqrt{z_{n_0} z_{n_1} z_{n_2} z_{n_3}}  z_{n_0} z_{n_2}}{z_{n_0}^2 -z_{n_1}^2}   \frac{1}{\sqrt{z_{n_0} z_{n_1}}}  z_{n_2}^3 \frac{1}{z_{n_0} z_{n_2}^3} \sim \frac{n_2^{\frac{3}{2}} n_3^{\frac{1}{2}}}{ n_0^2 -n_1^2},
\end{align*}
where in the last inequality, we used Lemma  \ref{lem xpsincos}  to obtain
\begin{align*}
\int_{\frac{1}{z_{n_1}}}^{\frac{1}{z_{n_2}}}  r^3  \sin (z_{n_0} r -\frac{\pi}{4}) \cos (z_{n_1} r -\frac{\pi}{4}) \, dr  \lesssim \frac{1}{z_{n_0} z_{n_2}^3}.
\end{align*}

\noindent {\bf Case IV:} $\frac{1}{z_{n_2}} \leq r < \frac{1}{z_{n_3}}$.

Recall 
\begin{align*}
\int_{\frac{1}{z_{n_2}}}^{\frac{1}{z_{n_3}}} F(r) \phi'(r) \, dr \sim \int_{\frac{1}{z_{n_2}}}^{\frac{1}{z_{n_3}}} [-\frac{1}{z_{n_0} -z_{n_1}}  \cos  ((z_{n_0}+z_{n_1})r ) +  \frac{1}{z_{n_0} + z_{n_1}}  \sin ((z_{n_0}-z_{n_1})r)   ] \phi' \, dr .
\end{align*}
The error term from estimating $J_1 (z_{n_2} r)$ is $\mathcal{O} ((z_{n_2} r)^{-\frac{3}{2}})$,  and its contribution is bounded by
\begin{align*}
& \quad \int_{\frac{1}{z_{n_2}}}^{\frac{1}{z_{n_3}}}  \frac{1}{z_{n_0} -z_{n_1}} \cos  ((z_{n_0}+z_{n_1})r )  \sqrt{z_{n_2} z_{n_3}} z_{n_2}   (z_{n_2}r)^{-\frac{3}{2}} J_0 (z_{n_3} r)  \, dr \\
& \sim \frac{ \sqrt{z_{n_2} z_{n_3}} z_{n_2}}{z_{n_0} -z_{n_1}} \int_{\frac{1}{z_{n_2}}}^{\frac{1}{z_{n_3}}} \cos  ((z_{n_0}+z_{n_1})r )  (z_{n_2}r)^{-\frac{3}{2}} \, dr \\
& \lesssim \frac{ \sqrt{z_{n_3}} }{z_{n_0} -z_{n_1}}  \int_{\frac{1}{z_{n_2}}}^{\frac{1}{z_{n_3}}} \frac{\cos((z_{n_0}+z_{n_1})r )}{ r^{\frac{3}{2}} }\, dr \lesssim \frac{ \sqrt{z_{n_3}} }{z_{n_0} -z_{n_1}} (\frac{z_{n_2}}{z_{n_0} +z_{n_1}})^{\frac{3}{2}} \sqrt{z_{n_0}} \sim \frac{n_2^{\frac{3}{2}} n_3^{\frac{1}{2}}}{(n_0 -n_1)^2} ,
\end{align*}
where in the last inequality, we used Lemma \ref{lem xpsin} and a change of variables.

The error term from estimating $J_0 (z_{n_3} r)$ is $\mathcal{O} ((z_{n_3} r)^2)$,  and its contribution is 
\begin{align*}
& \quad \int_{\frac{1}{z_{n_2}}}^{\frac{1}{z_{n_3}}}  \frac{1}{z_{n_0} -z_{n_1}} \cos  ((z_{n_0}+z_{n_1})r )  \sqrt{z_{n_2} z_{n_3}} z_{n_2}  J_1 (z_{n_2} r)  (z_{n_3} r)^2  \, dr \\
& \sim \frac{ \sqrt{z_{n_2} z_{n_3}} z_{n_2}}{z_{n_0} -z_{n_1}} \int_{\frac{1}{z_{n_2}}}^{\frac{1}{z_{n_3}}} \cos  ((z_{n_0}+z_{n_1})r ) \frac{\sin(z_{n_2}r - \frac{\pi}{4})}{\sqrt{z_{n_2} r}} (z_{n_3} r)^2  \, dr \\
& \sim  \frac{ \sqrt{z_{n_2} z_{n_3}} z_{n_2}}{z_{n_0} -z_{n_1}}  \frac{z_{n_3}^3}{\sqrt{z_{n_2}}} \int_{\frac{1}{z_{n_2}}}^{\frac{1}{z_{n_3}}} r^{\frac{3}{2}} \cos((z_{n_0}+z_{n_1})r ) \sin (z_{n_2} r - \frac{\pi}{4}) \, dr \\
& \lesssim \frac{ \sqrt{z_{n_2} z_{n_3}} z_{n_2}}{z_{n_0} -z_{n_1}}  \frac{z_{n_3}^3}{\sqrt{z_{n_2}}} \frac{1}{z_{n_0} } z_{n_3}^{-\frac{3}{2}}  \sim \frac{n_2 n_3^2}{n_0^2},
\end{align*}
where in the last inequality, we used Lemma  \ref{lem xpsincos}  to obtain
\begin{align*}
\int_{\frac{1}{z_{n_2}}}^{\frac{1}{z_{n_3}}} r^{\frac{3}{2}} \cos((z_{n_0}+z_{n_1})r ) \sin (z_{n_2} r - \frac{\pi}{4}) \, dr   \lesssim  \frac{1}{z_{n_0} } z_{n_3}^{-\frac{3}{2}} .
\end{align*}

\noindent {\bf Case V:} $\frac{1}{z_{n_3}} \leq r \leq 1$.

The error term in the case will be dominated by the error in estimating $J_0(z_{n_3} r)$, which is $\mathcal{O} (z_{n_3}r)^{-\frac{3}{2}}$, hence it is bounded by
\begin{align*}
& \quad \int_{\frac{1}{z_{n_3}}}^1 \frac{1}{z_{n_0} -z_{n_1}} \cos  ((z_{n_0}+z_{n_1})r )  \sqrt{z_{n_2} z_{n_3}} z_{n_2}  J_1 (z_{n_2} r)  (z_{n_3} r)^{-\frac{3}{2}} \, dr \\
& \sim \frac{ \sqrt{z_{n_2} z_{n_3}} z_{n_2}}{z_{n_0} -z_{n_1}}  \int_{\frac{1}{z_{n_3}}}^1 \cos  ((z_{n_0}+z_{n_1})r ) \frac{\sin (z_{n_2} r - \frac{\pi}{4})}{\sqrt{z_{n_2} r}}  (z_{n_3} r)^{-\frac{3}{2}} \, dr \\
& \sim \frac{ z_{n_2}}{(z_{n_0} -z_{n_1})z_{n_3}} \int_{\frac{1}{z_{n_3}}}^1   \frac{1}{r^2}  \cos  ((z_{n_0}+z_{n_1})r ) \sin (z_{n_2} r - \frac{\pi}{4}) \, dr \\
& \lesssim \frac{ z_{n_2}}{(z_{n_0}-z_{n_1}) z_{n_3}} \frac{z_{n_3}^2}{z_{n_0}} \sim \frac{n_2 n_3}{n_0^2} ,
\end{align*}
where in the last inequality, we used Lemma \ref{lem xpsincos} to obtain
\begin{align*}
\int_{\frac{1}{z_{n_3}}}^1   \frac{1}{r^2} \cos  ((z_{n_0}+z_{n_1})r ) \sin (z_{n_2} r - \frac{\pi}{4})  \, dr \lesssim \frac{z_{n_3}^2}{z_{n_0}} .
\end{align*}

Now we conclude that all the error terms in estimating \eqref{eq ex} are controlled by the same bound in {\it (4)} in Claim \ref{claim e_n}, which is $\mathcal{O} \parenthese{\frac{n_2^{\frac{3}{2}} n_3^{\frac{1}{2}}}{n_0^2}} $.

\subsection{Estimates on some trigonometric integrals}
\begin{lem}\label{lem xpsin}
For $a,b >0$ and $p \neq 0$, we have 
\begin{align*}
\abs{\int_a^b x^p \sin x \, dx}  + \abs{\int_a^b x^p \cos x \, dx}  \lesssim  a^p + b^p .
\end{align*}
\end{lem}
\begin{proof}[Proof of Lemma \ref{lem xpsin}]
We integrate by parts and write
\begin{align*}
\int_a^b x^p \sin x  \, dx & = -x^p\cos x  \Big|_{a}^b +  p \int_a^b x^{p-1}\cos x  \, dx  , \\
\abs{\int_a^b x^p \sin x \, dx} & \lesssim \abs{b^p \cos b- a^p \cos a} + \abs{ \int_a^b x^{p-1} \, dx } \leq  a^p + b^p .
\end{align*}
The cosine  integral follows similarly.
\end{proof}

\begin{lem}\label{lem xpsincos}
For $a,b >0$, $m \leq n$ and $p \neq 0$, we have
\begin{align*}
& \abs{\int_a^b x^{p} \sin (mx) \cos (nx) \, dx}  + \abs{\int_a^b x^{p} \cos (mx) \sin (nx) \, dx} + \abs{\int_a^b x^{p} \sin (mx) \sin (nx) \, dx} + \abs{\int_a^b x^{p} \cos (mx) \cos (nx) \, dx} \\
& \lesssim \frac{n}{n^2 -m^2} (a^{p} + b^{p})  
\end{align*}
\end{lem}

\begin{proof}[Proof of Lemma \ref{lem xpsincos}]
We only present the calculation for the bound of the first term in this lemma. Similar calculation works for other terms. 

Performing  integration by parts, we write
\begin{align*}
& \quad \int_a^b x^{p} \sin (mx) \cos (nx) \, dx  = \frac{1}{n} \int_a^b x^{p} \sin (mx) \, d \sin (nx) \\
& = \frac{1}{n} \parenthese{ x^{p} \sin (mx) \sin (nx) \Big|_a^b  -p \int_a^b x^{p-1} \sin (mx) \sin (nx) \, dx } - \frac{m}{n} \int_a^b  x^{p} \cos(mx) \sin (nx) \, dx .
\end{align*}
Take the second term above and integrate by parts again, then we have 
\begin{align*}
& \quad \int_a^b  x^{p} \cos(mx) \sin (nx) \, dx = - \frac{1}{n} \int_a^b x^{p} \cos (cx) \, d \cos (nx) \\
& = -\frac{1}{n} \parenthese{  x^{p} \cos (mx) \cos (nx) \Big|_a^b  -p \int_a^b x^{p-1} \cos (mx) \cos (nx) \, dx} -\frac{m}{n} \int_a^b x^{p} \sin (mx) \cos (nx) \, dx  .
\end{align*}
Now combining the copulation above, we arrive at
\begin{align*}
A & : = \int_a^b x^{p} \sin (mx) \cos (nx) \, dx  \\
& = \frac{1}{n} \parenthese{ x^{p} \sin (mx) \sin (nx) \Big|_a^b  -p \int_a^b x^{p-1} \sin (mx) \sin (nx) \, dx } - \frac{m}{n} \int_a^b  x^{p} \cos(mx) \sin (nx) \, dx \\
& = \frac{1}{n} \parenthese{ x^{p} \sin (mx) \sin (nx) \Big|_a^b  -p \int_a^b x^{p-1} \sin (mx) \sin (nx) \, dx } \\
& \quad   + \frac{m}{n^2} \parenthese{  x^{p} \cos (mx) \cos (nx) \Big|_a^b -p \int_a^b x^{p-1} \cos (mx) \cos (nx) \, dx} + \frac{m^2}{n^2}A .
\end{align*}
Therefore, solving  for $A$ gives us 
\begin{align*}
(1-\frac{m^2}{n^2}) A & = \frac{1}{n} \parenthese{ x^{p} \sin (mx) \sin (nx) \Big|_a^b  - p \int_a^b x^{p-1} \sin (mx) \sin (nx) \, dx } \\
& \quad   + \frac{m}{n^2} \parenthese{  x^{p} \cos (mx) \cos (nx) \Big|_a^b -p \int_a^b x^{p-1} \cos (mx) \cos (nx) \, dx}  ,
\end{align*}
then the triangle inequality and boundedness of trig functions yield the desired bound
\begin{align*}
\abs{A} & \lesssim \frac{n^2}{n^2 -m^2} \frac{1}{n} \abs{ x^{p} \sin (mx) \sin (nx) \Big|_a^b  -p \int_a^b x^{p-1} \sin (mx) \sin (nx) \, dx }\\
\abs{A} & \lesssim \frac{n}{n^2 -m^2} (a^{p} + b^{p})  .
\end{align*}

\end{proof}

\bibliography{FNLS}
\bibliographystyle{plain}
\end{document}